\tikzset{taar/.style={double, double equal sign distance, -implies}}
\tikzset{amar/.style={->, dotted}}
\tikzset{dmar/.style={->, dashed}}
\tikzset{aar/.style={->, very thick}}
\newtheorem{theorem}{Theorem}[section]
\newtheorem{lemma}[theorem]{Lemma}
\newtheorem{proposition}[theorem]{Proposition}
\newtheorem{corollary}[theorem]{Corollary}
\theoremstyle{definition}
\newtheorem{definition}[theorem]{Definition}
\theoremstyle{remark}
\newtheorem{remark}[theorem]{Remark}
\newtheorem{example}[theorem]{Example}
\theoremstyle{Question}
\newtheorem{Question}[theorem]{Question}
\def\im{\operatorname{im}}
\def\CFK{\mathit{CFK}}
\newcommand{\Id}{\operatorname{Id}}
\newcommand\crule[3][black]{\textcolor{#1}{\rule{#2}{#3}}}
\newcounter{countitems}
\newcounter{nextitemizecount}
\newcommand{\setupcountitems}{%
  \stepcounter{nextitemizecount}%
  \setcounter{countitems}{0}%
  \preto\item{\stepcounter{countitems}}%
}
\newcommand{\computecountitems}{%
  \edef\@currentlabel{\number\c@countitems}%
  \label{countitems@\number\numexpr\value{nextitemizecount}-1\relax}%
}
\newcommand{\nextitemizecount}{%
  \getrefnumber{countitems@\number\c@nextitemizecount}%
}
\newcommand{\previtemizecount}{%
  \getrefnumber{countitems@\number\numexpr\value{nextitemizecount}-1\relax}%
}
\newenvironment{AutoMultiColItemize}{%
\ifnumcomp{\nextitemizecount}{>}{3}{\begin{multicols}{2}}{}%
\setupcountitems\begin{itemize}}%
{\end{itemize}%
\unskip\computecountitems\ifnumcomp{\previtemizecount}{>}{3}{\end{multicols}}{}}
\author[J. Patwardhan]{Jay Patwardhan}
\email{jap600@scarletmail.rutgers.edu}
\address{Rutgers University New Brunswick, New Brunswick, NJ, USA}
\thanks{JP was partially supported by NSF CAREER Grant DMS-2019396.}
\author[Z. Xiao]{Zheheng Xiao}
\email{zx2377@columbia.edu}
\address{Columbia University, New York, NY, USA}
\thanks{ZX was partially supported by NSF CAREER Grant DMS-2019396.}
\numberwithin{equation}{section}
\title{Generalized Mazur patterns and immersed Heegaard Floer homology}
\begin{document}
\maketitle
\begin{abstract} Generalizing prior work of Levine, we give infinitely many examples of pattern knots $P$ such that $P(K)$ is not slice in any rational homology $4$-ball, for any companion knot $K$. To show this, we establish a closed formula for the concordance invariants $\tau$ and $\epsilon$ of a family of satellite knots obtained from generalized Mazur patterns. Our main computational tool is the immersed curve technique from bordered Heegaard Floer homology arising from the work of Chen-Hanselman.
\end{abstract}
\tableofcontents
\section{Introduction}
Two knots in $S^3$ are said to be \textit{smoothly concordant} if they co-bound a smoothly embedded annulus in $S^3\times I$. The set of concordance classes of knots form a group $\mathcal{C}$, with addition given by connect sum, and identity given by the concordance class of the unknot. In particular, knots in the concordance class of the unknot are called \textit{smoothly slice}. The classical study of knot concordance looks to classify which knots in $S^3$ are smoothly slice in $D^4$. 

One may extend the notion of sliceness to knots to more general $3$-manifolds. A knot $K$ in the boundary of a smooth $4$-manifold $M$ is said to be \textit{smoothly slice} if it bounds a smoothly embedded disk in $M$. There are also weaker notions of concordance that we consider: two knots are \textit{exotically concordant} if they co-bound a smoothly embedded annulus in a smooth $4$-manifold homeomorphic to $S^3 \times I$ but possibly with an exotic smooth structure, and (for a ring $R$) \textit{$R$-homology concordant} if they co-bound a smoothly embedded annulus in a smooth manifold with the same $R$-homology of $S^3 \times I$. Then, a knot $K \subset S^3$ is\textit{ exotically slice} or \textit{$R$-homology slice} if it is exotically or $R$-homology concordant to the unknot, respectively; this is the same as saying that $K$ bounds an embedded disk in a contractible $4$-manifold (which is homeomorphic to $D^4$ by Freedman \cite{Fre82}) or an $R$-homology $4$-ball, respectively. In particular, $\mathbb{Q}$-homology concordance is stronger than exotic concordance.

For any two knots $K\subset S^3$ and $P\subset S^1 \times D^2$, let $P(K)$ denote the satellite knot of $K$ with pattern $P$. If $K$ is concordant to $K'$, then $P(K)$ is concordant to $P(K')$, so we may regard $P$ as an operator $P: \mathcal{C} \rightarrow\mathcal{C}$, called the \textit{satellite operator}. Note that this map is generally not a homomorphism. The satellite operator is well-studied in literature; see for example \cite{Hed07, Hom14a, Lev16}. Moreover, we define the \textit{winding number} $w(P)$ to be the number of signed intersections of a meridional disk with $P$.

Problem 1.45 in Kirby's problem list \cite{Kir97}, attributed to Akbulut, asks whether there exists a winding number $\pm 1$ satellite operator $P$ for which $P(K)$ is never exotically slice, that is, $P(K)$ does not bound a contractible 4-manifold. Levine answered a stronger version of this in the affirmative:

\begin{theorem}\cite[Theorem 1.2]{Lev16}\label{Kirby problem}
    There exists a pattern knot $P \subset S^1 \times D^2$ with winding number $1$ such that for any knot $K \subset S^3$, $P(K)$ is not slice in any rational homology $4$-ball.
\end{theorem}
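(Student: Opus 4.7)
The plan is to take $P$ to be the Mazur pattern (or, more ambitiously, one of the generalized Mazur patterns that are the focus of this paper), which has winding number $1$, and to obstruct rational sliceness of $P(K)$ by showing that the Heegaard Floer concordance invariants $\tau$ and $\epsilon$ of $P(K)$ cannot simultaneously vanish, regardless of $K$.

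First I would invoke the fact that both $\tau$ and $\epsilon$ are $\mathbb{Q}$-homology concordance invariants, so each must vanish on any knot that is slice in a rational homology $4$-ball. Hence it suffices to exhibit a pattern $P$ such that $\bigl(\tau(P(K)),\epsilon(P(K))\bigr)\neq(0,0)$ for every $K\subset S^3$. This replaces a geometric problem (nonexistence of a slice disk in some $\mathbb{Q}$HB) by a purely algebraic one about $\CFKi$.

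Next I would compute $\bigl(\tau(P(K)),\epsilon(P(K))\bigr)$ as an explicit function of the $\CFKi$-type data of $K$. The natural tool, as signalled by the abstract, is the immersed curve reformulation of bordered Heegaard Floer theory due to Chen--Hanselman: the complement of $K$ carries an immersed multicurve $\gamma_K$ in the punctured torus, the pattern $P$ (viewed as a knot in the solid torus) produces a type $D$ structure, and $\CFKi(P(K))$ is recovered by pairing these two pieces of data. Since $P$ is a generalized Mazur pattern the relevant piece of data on the solid torus side is very simple, so the pairing reduces to a concrete combinatorial intersection count, from which $\tau(P(K))$ and $\epsilon(P(K))$ can be read off directly.

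Finally, with such a closed formula in hand, one performs a case analysis on $\epsilon(K)\in\{-1,0,1\}$ together with the sign of $\tau(K)$, and in each case one checks that $\bigl(\tau(P(K)),\epsilon(P(K))\bigr)\neq(0,0)$. The main obstacle is establishing the closed formula itself: this requires carefully tracking how each horizontal or vertical segment of $\gamma_K$ contributes to the paired complex, and in particular controlling the generators whose Alexander grading is near $0$, since these are precisely the ones whose incoming and outgoing differentials decide the sign of $\epsilon(P(K))$ when $\tau(P(K))$ happens to equal zero. Once the case $\epsilon(K)=0,\,\tau(K)=0$ (i.e., $K$ itself $\tau$- and $\epsilon$-trivial) is disposed of, the remaining cases follow by a bounded check, and the theorem follows.
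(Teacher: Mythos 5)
There is a genuine gap, and it occurs at the very first step: your choice of $P$. The Mazur pattern and all the generalized Mazur patterns $Q_{m,n}$ are \emph{unknot patterns}, meaning $Q_{m,n}(U)\simeq U$; taking $K=U$ therefore gives a satellite that bounds a disk in $D^4$, which is in particular a rational homology $4$-ball. More generally, the closed formulas you propose to establish (and which the paper does establish) show that whenever $\tau(K)=\epsilon(K)=0$ one gets $\tau(Q_{m,n}(K))=\epsilon(Q_{m,n}(K))=0$, so the obstruction you want --- that $\bigl(\tau(P(K)),\epsilon(P(K))\bigr)\neq(0,0)$ for \emph{every} $K$ --- simply fails for these patterns. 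No amount of care in the immersed-curve pairing will change this; the case analysis you defer to the end cannot be ``disposed of'' in the case $\tau(K)=\epsilon(K)=0$.

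The missing idea is a modification of the pattern. The computation shows that $\epsilon(Q_{m,n}(K))\in\{0,1\}$ for all $K$, i.e.\ $\epsilon$ of the satellite is never $-1$; since $\epsilon$ is a rational homology concordance invariant, no $Q_{m,n}(K)$ is rationally concordant to a fixed knot $L$ with $\epsilon(L)=-1$ (e.g.\ the left-handed trefoil). One then takes $P=Q_{m,m-1}\,\#\,(-L)$ \emph{inside the solid torus}, which still has winding number $1$, and observes that $P(K)=Q_{m,m-1}(K)\,\#\,(-L)$. If $P(K)$ were slice in a rational homology ball, then $Q_{m,m-1}(K)$ would be rationally concordant to $L$, a contradiction; equivalently, the additivity properties of $\epsilon$ give $\epsilon(P(K))=1$ for all $K$. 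Your reduction of rational sliceness to the vanishing of $\tau$ and $\epsilon$, and your plan to compute these via the Chen--Hanselman pairing, are both correct and are exactly what the paper does --- but they must be combined with this connected-sum step to produce a pattern satisfying the theorem.
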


Levine's strategy was to find a pattern $Q$ which induces a non-surjective satellite operator on the rational homology concordance classes, so that there exists a knot $L$ which is not concordant to $Q(K)$ for all $K \subset S^3$. Then, $P = Q \# -L \subset S^1 \times D^2$ satisfies the conclusion of the Theorem. In particular, he chose $Q$ to be the Mazur pattern, shown in Figure \ref{fig: Mazur}.

\begin{figure}[!htb]
    \centering
\includegraphics[scale=0.9]{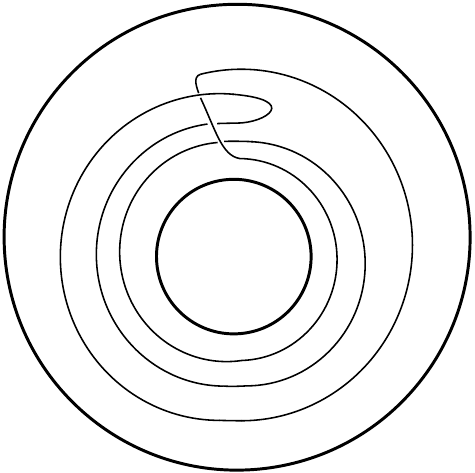}
    \caption{The Mazur pattern $Q_{2,1}$ embedded in the solid torus $V=S^1\times D^2$.}
    \label{fig: Mazur}
\end{figure}

The goal of this paper is to expand on this result to obtain an infinite family of pattern knots with winding number $\pm 1$ whose satellites are never slice in any rational homology $4$-ball. To do this, we consider a generalization of the Mazur pattern $Q$.

\begin{definition} \label{def gmp}
 Starting with a point in the solid torus, wind $m$ times around the torus, then turn around and wind $n$ times. Join the top endpoint of the arc to the bottom endpoint by crossing under the first $n$ times, and over the next $m$ times, resulting in a clasp where the pattern turns around. The resulting pattern knot is a \textit{generalized Mazur pattern} $Q_{m,n}$, and a picture of this is shown in Figure \ref{fig:gen mazur pic}.
\end{definition}

\begin{figure}[]
    \centering
\includegraphics[scale=0.7]{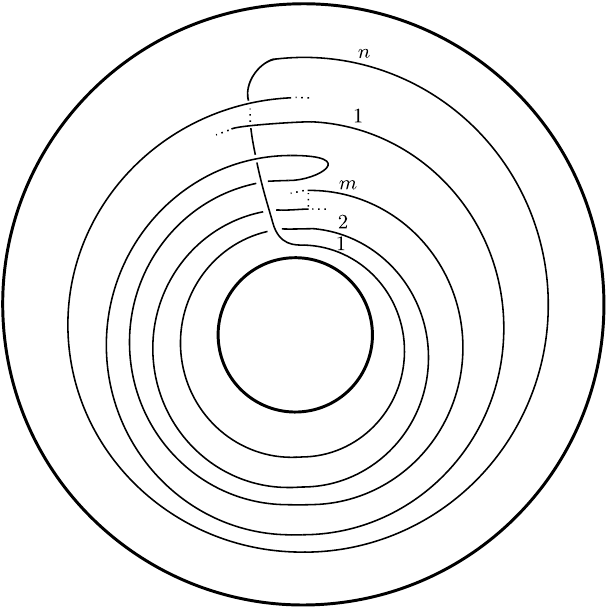}
    \caption{The generalized Mazur pattern $Q_{m,n}$}
    \label{fig:gen mazur pic}
\end{figure}

A pattern knot $P$ is a $(1,1)$-pattern if it admits a genus one doubly-pointed Heegaard diagram, and is an \textit{unknot pattern} if $P(U) \simeq U$ in $S^3$, where $U$ is the unknot \cite{Che19,CH23}. Generalized Mazur patterns $Q_{m,n}$ are part of the class of $(1,1)$-unknot patterns, and have winding number $\pm (m-n)$ depending on the orientation. We show that the pattern remains the same if we switch $m$ and $n$.

\begin{proposition} \label{qmn isotopic}
    Inside the solid torus $S^1 \times D^2$, $Q_{m,n}$ is isotopic to $Q_{n,m}$.
\end{proposition}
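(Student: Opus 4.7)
The plan is to exhibit an explicit ambient isotopy of $V = S^1 \times D^2$ realizing $Q_{m,n} \simeq Q_{n,m}$ via a single rotational symmetry of the standard diagram in Figure \ref{fig:gen mazur pic}.

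First I would normalize the picture so that the two winding arcs run horizontally, with the $m$-wound arc on top and the $n$-wound arc on the bottom, joined on one side by a simple U-turn (the ``turnaround'') and on the other by the clasp which, read top to bottom, consists of $n$ undercrossings followed by $m$ overcrossings, as prescribed by Definition \ref{def gmp}. The key observation is that the solid torus $V$ admits a natural involution $\sigma$ obtained by rotating each disk fiber $\{\theta\}\times D^2$ by $\pi$ about a fixed diameter lying in the plane of the page. This rotation preserves $V$ setwise, swaps the top and bottom of the diagram (so the $m$-arc and $n$-arc exchange), and --- crucially, because the axis of rotation lies in the plane of the page --- it simultaneously reverses over- and under-crossings at every double point.

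Tracking the action of $\sigma$ on $Q_{m,n}$, the $m$-wound and $n$-wound arcs swap, the turnaround is sent to itself, and the clasp word $U^n O^m$ (reading top to bottom) is first reversed in order to $O^m U^n$ by the top-to-bottom flip and then flipped over/under to $U^m O^n$ by the crossing-sign reversal. Comparing with Definition \ref{def gmp} applied with the roles of $m$ and $n$ exchanged, this is precisely the defining diagram of $Q_{n,m}$, so $\sigma(Q_{m,n}) = Q_{n,m}$ as patterns in $V$, which is the required isotopy.

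The main obstacle is the bookkeeping in the clasp: one needs to confirm that the top-to-bottom reversal and the over/under swap genuinely combine to yield the clasp $U^m O^n$ of $Q_{n,m}$, rather than some mirrored or otherwise inequivalent tangle. This can be verified directly by labelling the $m+n$ strands entering the clasp, checking the induced permutation under $\sigma$, and sanity-checking on the original Mazur case $m=2$, $n=1$, where $Q_{2,1}=Q_{1,2}$ is already visible from the diagram. Once this is done, the fact that $\sigma$ is an orientation-preserving self-diffeomorphism of $V$ that is isotopic to the identity through rotations of each disk fiber upgrades the equality $\sigma(Q_{m,n})=Q_{n,m}$ to an ambient isotopy in $V$.
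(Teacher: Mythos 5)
Your route is genuinely different from the paper's: the paper identifies the two-bridge links associated to $Q_{m,n}$ and $Q_{n,m}$ via Theorem \ref{wenzhaos thm} and Proposition \ref{braid decomp} as $b(4mn+2m+2n,\,2m+1)$ and $b(4mn+2m+2n,\,2n+1)$, and concludes from $(2m+1)(2n+1)\equiv 1 \pmod{4mn+2m+2n}$ together with Schubert's classification that the links, hence the patterns, are isotopic. A direct diagrammatic symmetry would be more elementary, but as written your argument contains a concrete error. The map you describe --- rotating each disk fiber $\{\theta\}\times D^2$ by $\pi$ about a fixed diameter lying in the plane of the page --- is, in fiber coordinates $(y,z)$ with $y$ vertical in the page and $z$ out of the page, the map $(y,z)\mapsto (y,-z)$: a reflection of $V$ through the plane of the page. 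It reverses every crossing, but it does \emph{not} exchange the top and bottom of the diagram, it is orientation-reversing on $V$, and it is not isotopic to the identity through fiber rotations (it is orientation-reversing on each fiber). Taken literally, your $\sigma$ sends $Q_{m,n}$ to its mirror inside $V$, which is in general not isotopic to $Q_{m,n}$ or to $Q_{n,m}$: for instance, Theorem \ref{main thm} gives $\tau(Q_{2,1}(T_{2,3}))=2$ while the mirror pattern applied to $T_{2,3}$ yields $\tau=0$, so the Mazur pattern is chiral in the solid torus. The involution that actually has the two properties you use --- swapping top and bottom \emph{and} reversing all crossings, while being isotopic to the identity --- is the fiberwise $\pi$-rotation about the \emph{center} of each disk, i.e.\ rotation of $V$ about its core circle, $(y,z)\mapsto(-y,-z)$.

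Even after replacing $\sigma$ by the core rotation, a gap remains: since that map is isotopic to the identity, $\sigma(P)\simeq P$ for every pattern, so the entire content of the proof is the purely diagrammatic assertion that the flipped, crossing-reversed standard diagram of $Q_{m,n}$ coincides with the standard diagram of $Q_{n,m}$ from Definition \ref{def gmp} --- in particular the bookkeeping at the clasp, including the fact that the clasp sign is preserved (a sign error here would produce the oppositely clasped pattern, which is not isotopic to $Q_{n,m}$; compare the two Whitehead patterns at $m=n=1$). You identify this as the main obstacle but do not carry it out, and it is not automatic. Until that verification is supplied (or replaced by the two-bridge computation the paper uses), the proof is incomplete.
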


In the early 2000s, Ozsv{\'a}th and Szab{\'o} \cite{OS04a, OS04c} introduced  Heegaard Floer homology, a collection of invariants of three-manifolds and knots and links inside them. The knot version, also independently introduced by Rasmussen \cite{Ras03}, associates to every knot $K\subset S^3$ a $\mathbb{Z}\oplus \mathbb{Z}$-filtered, free $\mathbb{F}[U, U^{-1}]$-complex $CFK^\infty (K)$, called the knot Floer complex of $K$. Knot Floer homology has several nice properties; for example, it categorifies the Alexander polynomial \cite{OS04a}, detects the knot genus \cite{OS04b}, and detects fiberedness \cite{Ghi08, Ni07}. There are a variety of concordance invariants arising from $CFK^\infty(K)$; in this paper, we are interested in the integer-valued $\tau$-invariant \cite{OS03} and the $\{-1, 0, 1\}$-valued $\epsilon$-invariant \cite{Hom14b}, which are maps with domain the set of rational homology concordance classes.

In \cite{Lev16}, Levine computed $\tau(Q(K))$ and $\epsilon(Q(K))$ for satellites along the Mazur pattern.
\begin{theorem}\cite[Theorem 1.6]{Lev16}\label{Levine theorem} If $Q$ is the Mazur pattern, then for any knot $K \subset S^3$, we have
    $$
    \tau(Q(K))= \begin{cases}\tau(K) & \text { if } \tau(K) \leq 0 \text { and } \epsilon(K) \in\{0,1\} \\ \tau(K)+1 & \text { if } \tau(K)>0 \text { or } \epsilon(K)=-1.\end{cases}
    $$

and 

    $$
    \epsilon(Q(K)) = \begin{cases}0 & \text { if } \tau(K) = \epsilon(K) = 0 \\
    1 & \text { otherwise }
    \end{cases}
    $$
\end{theorem}

In this paper, we generalize this result to the generalized Mazur patterns $Q_{m,n}(K)$:
\begin{theorem}\label{main thm}
Let $Q_{m,n}$ be the generalized Mazur pattern embedded in the solid torus $V$. If $m\ne n$, then for any knot $K\subset S^3$, we have \begin{equation}\label{main eq}
    \tau(Q_{m,n}(K)) = \begin{cases}
    |m-n|\tau(K) & \text{if } \tau(K) \le 0 \text{ and } \epsilon(K) \in \{0,1\}, \\
    |m-n|\tau(K) + |m-n| & \text{if } \tau(K) < 0 \text{ and } \epsilon(K)=-1, \\
    |m-n|\tau(K) + \min(m,n) & \text{if } \tau(K) > 0 \text{ and }\epsilon(K) = 1,\\
    |m-n|\tau(K) + \max(m,n) -1& \text{if } \tau(K) \ge 0 \text{ and }\epsilon(K) = -1.
    \end{cases}
    \end{equation}
In the case where $m=n$, we have 
\begin{equation}\label{main eq 3}
    \tau(Q_{m,m}(K)) = \begin{cases}
    0 & \text{if } \tau(K) < 0, \\
    m-1 & \text{if } \tau(K) = 0, \\
    m & \text{if } \tau(K) > 0.
    \end{cases}
    \end{equation}
Also,
\begin{equation}
    \epsilon(Q_{m,n}(K)) = \begin{cases}
        0 & \text{if } \tau(K) = \epsilon(K) = 0, \\
        1 & \text{otherwise.}
    \end{cases}
\end{equation}
\end{theorem}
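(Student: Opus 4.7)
The plan is to compute the full knot Floer complex $\CFK^\infty(Q_{m,n}(K))$ (or enough of it to extract $\tau$ and $\epsilon$) via bordered Heegaard Floer homology applied to the decomposition $S^3 = (S^3 \setminus \nu K) \cup V$, where $V$ is the solid torus containing $Q_{m,n}$. By \cref{qmn isotopic} one may assume $m \ge n$ throughout. Since $Q_{m,n}$ is a $(1,1)$-pattern, the bordered invariant of its complement in $V$ admits a very explicit description as a decorated immersed multicurve in the marked torus, via the Chen--Hanselman framework. The first step is to extract this immersed curve (equivalently, the type~D structure $\CFDa$) of $S^3 \setminus V$ directly from the genus-one doubly-pointed Heegaard diagram associated to \cref{fig:gen mazur pic}; the $m + n$ strands of the clasp give a controlled family of generators whose differentials can be read off combinatorially.

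Next, I would invoke the pairing theorem in its immersed-curve form: $\widehat{\CFK}(Q_{m,n}(K))$ is computed by the oriented intersection of the immersed curve for $Q_{m,n}$ with the Hanselman--Rasmussen--Watson immersed curve $\gamma(K)$ for $S^3 \setminus \nu K$. Since both $\tau$ and $\epsilon$ are determined by the $\epsilon$-equivalence class of $\CFK^\infty(K)$, one can replace $\gamma(K)$ by its \emph{standard representative}, a staircase curve whose shape is completely determined by the pair $(\tau(K), \epsilon(K))$. This reduces the whole computation to four model situations (plus the trivial $\tau(K)=\epsilon(K)=0$ case), each of which is a finite, explicit intersection calculation.

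The third step is the case analysis dictated by the statement of the theorem. In each case, one identifies the generator $x_0$ representing a generator of $\widehat{\HF}(S^3) \cong \mathbb{F}$ after collapsing the Alexander filtration, computes its Alexander grading (which equals $\tau(Q_{m,n}(K))$ up to sign conventions), and then examines the horizontal/vertical arrows at $x_0$ to determine $\epsilon(Q_{m,n}(K))$ via the standard criterion of Hom. Careful bookkeeping shows that the positive-$\tau$ case contributes $\min(m,n)$ to the Alexander grading, coming from the $n$ sub-strands of the clasp that the staircase must cross, while the negative-$\tau$, $\epsilon=-1$ case contributes $\max(m,n)-1$ from the opposite side of the clasp. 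The winding-number-zero case $m=n$ must be analyzed separately because $\gamma(Q_{m,m})$ contains a component homologous to the meridian, which changes the structure of the intersection and yields the three-case formula in \eqref{main eq 3}.

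The main obstacle I anticipate is the first step: producing a correct, sign-consistent description of the immersed curve for $Q_{m,n}$ that cleanly tracks the effect of the two parameters $m$ and $n$, particularly the interaction of the $m$ over-strands and $n$ under-strands at the clasp. Once this curve is in hand, the pairing computation and the case-by-case reading of $\tau$ and $\epsilon$ are essentially mechanical; the genuinely delicate point is ensuring that the asymmetry between $\min(m,n)$ and $\max(m,n)-1$ in the positive- versus negative-$\tau$ cases emerges naturally from the geometry of the curve, and is compatible with the symmetry $Q_{m,n} \simeq Q_{n,m}$ of \cref{qmn isotopic}.
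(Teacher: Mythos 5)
Your proposal follows essentially the same route as the paper: construct the genus-one doubly-pointed Heegaard diagram for $Q_{m,n}$, pair its $\beta$-curve against the immersed curve of $X_K$ (reduced to the standard local model near the unstable segment determined by $(\tau(K),\epsilon(K))$), locate the generator surviving the Alexander spectral sequence to read off $\tau$, and apply Hom's horizontal/vertical criterion at that generator to read off $\epsilon$, with the same four-case (plus $\epsilon(K)=0$) analysis. The only cosmetic differences are terminological (the pattern contributes the type~$A$ side / $\beta$-curve rather than a type~$D$ structure of $S^3\setminus V$, and the paper folds $m=n$ into the same cases by reordering the cancellations rather than treating it as a structurally separate pairing), neither of which affects the argument.
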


\begin{remark}
    To put this computation into context, we make the following remark about $(1,1)$-patterns. Given a fixed pattern knot $P$ and a fixed companion knot $K$, there are many available tools in the literature that compute the $\tau$-invariant of $P(K)$, stemming from bordered Heegaard Floer homology. However, it is challenging to find a closed formula for $\tau(P(K))$ as we vary the companion knot. The formula above recovers the results in the literature for satellites along the Whitehead double $Q_{1,1}$ \cite[Theorem 1.4]{Hed07} and the Mazur pattern $Q_{2,1}$ \cite[Theorem 1.6]{Lev16}. Partial results for this formula have also been computed in \cite[Proposition 3.5]{Ray15}, using certain conditions on the Thurston Bennet number.\footnote{In Definition \ref{def gmp}, we require $m$ and $n$ to be positive because if $m$ (equivalently $n$) is $0$, then $Q_{0,p} \simeq Q_{p,0}$ is the $(p,1)$-cable. Hom has computed this case in \cite{Hom14a}, and the result does not follow the pattern of Theorem \ref{main thm}. This is expected, since the methods we used to obtain the results above rely on the parameters $m$ and $n$ being strictly positive.} Since this paper was first posted, Bodish \cite{Bod24} studied similar families of patterns, called the $n$-twisted generalized Mazur patterns and denoted by $Q^{i,j}_n$. Similar to our approach, he showed that satellites along those patterns are never slice in any rational homology $4$-ball by computing their $\tau$ and $\epsilon$ invariants. Notably, the patterns $Q_{m,1}$ in this paper overlap with the patterns $Q^{0,m-1}_0$, but they differ for $Q_{m,n}$ when $n\ge 2$.
\end{remark}

Similar to \cite{Lev16}, we use techniques in bordered Heegaard Floer homology, due to Lipshitz, Ozsv{\'a}th and Thurston \cite{LOT18}, which is well-adapted to study 3-manifolds with parametrized boundary. In particular, we use the immersed curve interpretation of the bordered pairing theorem developed by Hanselman \cite{Han23}. The proof strategy for Theorem \ref{main thm} is inspired by \cite[Section 6]{CH23}, which recovers Levine's computation using immersed curve techniques.

We are now ready to construct our infinite family of winding number $\pm 1$ pattern knots which induce non-surjective satellite operators on the rational concordance groups. Since $\epsilon(Q_{m,m-1}(K)) \neq -1$ for all $K \subset S^3$, we may pick a knot $L \subset S^3$ such that $\epsilon(L) = -1$, such as the left handed trefoil. Then, $P_{m,m-1} = Q_{m,m-1} \# -L \subset S^1 \times D^2$ satisfies the conclusion of Theorem \ref{Kirby problem}. Thus, we have shown the following.

\begin{corollary}
        For $m > 1$, the infinite family of winding number 1 pattern knots $P_{m,m-1}$ have the property that $P_{m,m-1}(K)$ is not smoothly slice in any rational homology $4$-ball for any knot $K \subset S^3$. 
\end{corollary}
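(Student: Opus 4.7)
My plan is to follow exactly Levine's template from the proof of Theorem~\ref{Kirby problem}, using Theorem~\ref{main thm} in place of Theorem~\ref{Levine theorem}. The critical input is the $\epsilon$-calculation at the end of Theorem~\ref{main thm}: for every companion knot $K$ and every $m>1$, one has $\epsilon(Q_{m,m-1}(K)) \in \{0,1\}$. Thus if $L$ is any knot with $\epsilon(L) = -1$ (for instance the left-handed trefoil), then $Q_{m,m-1}(K)$ can never be rationally concordant to $L$.

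With this in mind I would argue by contradiction. Suppose some satellite $P_{m,m-1}(K) = Q_{m,m-1}(K) \# (-L)$ were smoothly slice in a rational homology $4$-ball. Stacking this slice disk with the standard ribbon concordance from the unknot to $L \# (-L)$ yields a $\mathbb{Q}$-homology concordance from $Q_{m,m-1}(K)$ to $L$. Since $\epsilon$ is an invariant of rational concordance classes, as recalled in the introduction, this would force $\epsilon(Q_{m,m-1}(K)) = \epsilon(L) = -1$, contradicting Theorem~\ref{main thm}.

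What remains is bookkeeping. For the winding number, note that $w(Q_{m,m-1}) = |m-(m-1)| = 1$, and the local knot $-L$ can be inserted into a small ball inside the solid torus disjoint from some meridional disk, so $w(P_{m,m-1}) = w(Q_{m,m-1}) = 1$. For distinctness of the patterns across $m$, one can observe that $P_{m,m-1}(U) = Q_{m,m-1}(U) \# (-L)$ has, for example, Alexander polynomial or knot Floer homology depending nontrivially on~$m$. The real obstacle in the whole project is therefore Theorem~\ref{main thm} itself; once that theorem is in hand, this corollary is essentially immediate, exploiting only the trivial arithmetic fact that $-1 \notin \{0,1\}$.
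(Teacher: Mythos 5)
Your proposal is correct and follows essentially the same route as the paper: both invoke the $\epsilon$-computation of Theorem \ref{main thm} to see that $\epsilon(Q_{m,m-1}(K)) \neq -1$ for all $K$, choose $L$ with $\epsilon(L)=-1$, and conclude that $P_{m,m-1}=Q_{m,m-1}\#(-L)$ can never have a rationally slice satellite, exactly as in Levine's template. Your extra bookkeeping on the winding number and the concordance-by-stacking argument only makes explicit what the paper leaves implicit.
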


The computation of Theorem \ref{main thm} also allows us to recover the pattern knot genus of generalized Mazur patterns $Q_{m,n}$. Recall that for a pattern $P$ with winding number $w(P)$, a \textit{relative Seifert surface} for $P$ is a surface $\widehat{\Sigma}$ in $S^1 \times D^2$ such that the interior of $\widehat{\Sigma}$ is disjoint from $P$, and the boundary of $\widehat{\Sigma}$ consists of $P$ together with $w(P)$ coherently oriented longitudes. The \textit{genus} of a pattern $g(P)$ inside the solid torus is defined as the minimal genus of a relative Seifert surface for $P$. For a satellite knot $P(K)$ with nontrivial companion $K$, a classical result of Schubert \cite{Sch53} shows the relation between the three-genus of the satellite knot $g(P(K))$ and $w(P)$, $g(P)$, and $g(K)$:
\begin{equation} \label{schubert genus equation}
    g(P(K)) = |w(P)|g(K) + g(P).
\end{equation}
We know $w(Q_{m,n})$, so the value of $g(Q_{m,n})$ is determined by $g(Q_{m,n}(K))$ and $g(K)$ for some companion knot $K$ of our choice. Using the fact that knot Floer homology detects knot genus, we obtain a formula for the genus of $Q_{m,n}$.

\begin{proposition} \label{genus qmn}
    The patterns $Q_{m,n}$ have genus $g(Q_{m,n}) = \min(m,n)$.
\end{proposition}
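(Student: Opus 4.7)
The plan is to combine the classical Schubert genus formula \eqref{schubert genus equation} with the $\tau$-computation of Theorem \ref{main thm}. The argument splits into an upper bound $g(Q_{m,n}) \le \min(m,n)$ from an explicit geometric construction, and a matching lower bound extracted from the $\tau$-invariant of $Q_{m,n}$ evaluated on a carefully chosen companion.

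For the upper bound, I would exhibit a relative Seifert surface of genus $\min(m,n)$ in $V = S^1 \times D^2$. Assuming without loss of generality that $m \le n$ (by Proposition \ref{qmn isotopic}), one applies Seifert's algorithm to the oriented diagram of $Q_{m,n}$ in Figure \ref{fig:gen mazur pic} together with $n-m$ meridional longitudes on $\partial V$, and verifies by a direct count of crossings and Seifert circles that the resulting surface has genus $m$.

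For the lower bound when $m \ne n$, so that $w(Q_{m,n}) = |m-n| \ne 0$ and Schubert's formula applies, take $K = T_{2,3}$, the right-handed trefoil, for which $\tau(K) = g(K) = 1$ and $\epsilon(K) = 1$. Theorem \ref{main thm} then gives $\tau(Q_{m,n}(T_{2,3})) = |m-n| + \min(m,n) = \max(m,n)$, and combining the standard bound $|\tau| \le g$ with Schubert's formula yields
\[
\max(m,n) \;\le\; g\bigl(Q_{m,n}(T_{2,3})\bigr) \;=\; |m-n| + g(Q_{m,n}),
\]
so $g(Q_{m,n}) \ge \min(m,n)$, matching the upper bound.

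The case $m = n$ requires a small modification because $w(Q_{m,m}) = 0$ and Schubert's formula no longer gives equality. However, for a winding-zero pattern the relative Seifert surface of $Q_{m,m}$ in $V$ has boundary exactly $Q_{m,m}$, so embedding $V$ as a tubular neighborhood of any companion $K$ produces a Seifert surface for $Q_{m,m}(K)$ in $S^3$; hence $g(Q_{m,m}(K)) \le g(Q_{m,m})$ for every $K$. Choosing $K$ with $\tau(K) > 0$, Theorem \ref{main thm} gives $\tau(Q_{m,m}(K)) = m$, and $|\tau| \le g$ then forces $g(Q_{m,m}) \ge m$. The main obstacle throughout is the geometric upper-bound construction; once that surface is in hand, the lower bound is essentially formal, using only the $\tau$-values already established in Theorem \ref{main thm} together with Schubert's formula (or its winding-zero variant).
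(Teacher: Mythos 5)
Your lower bound is sound and is essentially what the paper does: take $K=T_{2,3}$, apply Theorem \ref{main thm} to get $\tau(Q_{m,n}(T_{2,3}))=\max(m,n)$, use $|\tau|\le g$ and Schubert's formula \eqref{schubert genus equation} to conclude $g(Q_{m,n})\ge \min(m,n)$ (and your winding-zero variant for $m=n$ is also fine, since a relative Seifert surface for a winding number $0$ pattern is an honest Seifert surface inside $V$). The problem is your upper bound. You assert that Seifert's algorithm applied to the diagram of $Q_{m,n}$ together with $|m-n|$ longitudes produces a relative Seifert surface of genus $\min(m,n)$, and you defer the "direct count of crossings and Seifert circles" that would justify this. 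That count is the entire content of the upper bound — it is not routine (one has to fix an orientation, track how the $2(m+n)$-odd crossings of the clasp region resolve, and count the resulting Seifert circles as a function of $m$ and $n$) — and as written it is simply not done. Since you yourself identify this as "the main obstacle," the proposal has a genuine gap precisely at its hardest step.

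The paper avoids any geometric construction by using the other half of genus detection: $g(Q_{m,n}(T_{2,3}))$ equals the \emph{largest} Alexander grading supporting nonzero $\widehat{HFK}$, and in the pairing diagram of Figure \ref{fig:pairing tau>0 eps=1} the generator $x_1$ realizes the extremal Alexander grading among \emph{all} generators of the complex, so $g(Q_{m,n}(T_{2,3}))\le |A(x_1)|$. Since $x_1$ is also the distinguished generator, $|A(x_1)|=|\tau(Q_{m,n}(T_{2,3}))|=\max(m,n)$, which gives the matching upper bound $g(Q_{m,n})\le \min(m,n)$ after subtracting $|m-n|$ via Schubert. If you want to salvage your write-up without carrying out the Seifert-circle count, replace your geometric upper bound with this observation: the genus of the satellite is bounded above by the top Alexander grading occurring in the pairing-diagram complex, which you can read off from the same grading computation you already use for $\tau$.
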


We end with a discussion on a related problem. While Theorem \ref{main thm} allows us to compute the $\tau$ and $\epsilon$-invariant of generalized Mazur patterns, one might hope that there is a general formula for $\tau(P(K))$ and $\epsilon(P(K))$ for any $(1,1)$-unknot pattern $P$. Moreover, we know that $(1,1)$-unknot patterns can be parameterized by a pair of integers $(r,s)$, so we expect that $\tau(P(K))$ and $\epsilon(P(K))$ can also be obtained with respect to $r$ and $s$ \cite{Che19}.

\begin{Question} \label{intro question}
    Is there a closed formula for the $\tau$-invariant of satellite knots with $(1,1)$-unknot patterns?
\end{Question}

\textbf{Organization.}
In Sections \ref{review background} and \ref{intro immersed}, we review some background from bordered and immersed Heegaard Floer homology and introduce the concordance invariants $\tau$ and $\epsilon$. In particular, we focus on obtaining immersed curves for knot complements, methods to recover Alexander gradings, and the relevant pairing theorems which give a strategy for computing $\tau(Q_{m,n}(K))$ and $\epsilon(Q_{m,n}(K))$. In Section \ref{general mazur patterns}, we describe the generalized Mazur patterns and some properties, including a constructive procedure to recover their bordered Heegaard diagrams, and their 2-bridge link representation. Here, we recover Proposition \ref{qmn isotopic}. In Section \ref{IMMERSED COMPUTATION}, we find both the $\tau$ and $\epsilon$-invariant for $Q_{m,n}$ using techniques from immersed Heegaard Floer homology, proving Theorem \ref{main thm}. We also use the pairing diagrams to recover Proposition \ref{genus qmn}. Lastly, Appendix \ref{compute (m,n)} recovers the computation for the $\tau$-invariant using only the ordinary bordered theory. Throughout the paper, the coefficients of Floer homology groups are taken in $\mathbb{F}=\mathbb{Z}/2\mathbb{Z}$.\\

\textbf{Acknowledgements.}
The authors are grateful to Kristen Hendricks and Abhishek Mallick for their guidance, to Robert Lipshitz, Jonathan Hanselman, Wenzhao Chen, and Adam Levine for helpful discussions, and to the Summer 2023 DIMACS REU for providing the opportunity for their research.

\section{Knot Floer homology}\label{review background}
In this section, we will briefly review those features of most relevance. We assume that the reader is familiar with Heegaard Floer homology. For an introductory overview, see \cite{Man14} and \cite{OS06}. In Sections \ref{review KHF} to \ref{CFA hat}, we closely follow the notation from \cite{Hom14a, Hom20}. In Section \ref{sec: CFD hat}, we follow the notation from \cite{Lev16}.

\subsection{The knot Floer complex and concordance invariants} \label{review KHF}
 To each $K\subset S^3$ we associate a $\mathbb{Z}\oplus \mathbb{Z}$-filtered (freely finitely generated) $\mathbb{F}[U,U^{-1}]$ chain complex $CFK^\infty(K)$, called the \textit{full knot Floer complex} of $K$\cite{OS03}. This complex is well-defined up to chain homotopy equivalence. It  admits two $\mathbb{Z}$-gradings called the \textit{Alexander} and \textit{Maslov} (or \textit{homological}) gradings, denoted respectively by the maps $A,M:I\to \mathbb{Z}$, where $I$ is a finite set of points specified by $K$.
    
    Equivalently, $CFK^\infty$ can be seen as an $\mathbb{F}$-vector space, freely generated by elements of the form 
    \[
    [x;i,j],\textrm{ where } x\in \mathbb{T}_\alpha \cap \mathbb{T}_{\beta},\ (i,j)\in \mathbb{Z}\oplus \mathbb{Z},\ \textrm{and } j-i=A(x).\]

    \noindent The triple $[x;i,j]$ corresponds to the generator $U^{-i}x$, and the $(i,j)$-filtration level is given by $\mathcal{F}_{i,j}=\{[x,i',j']\in CFK^{\infty}: i'\le i, j'\le j\}$. 
    The differential $\partial$ decreases the Maslov grading by 1, respects the Alexander filtration, and is $U$-equivariant, that is, 
    \begin{itemize}
        \item $M(\partial x)=M(x)-1$
        \item $A(\partial x)\le A(x)$
        \item $\partial(U^nx)=U^n\partial x$
    \end{itemize}
    Moreover, multiplication by $U$ decreases each filtration level by 1, lowers the Maslov grading by 2, and the Alexander grading by 1. In other words, 
    \begin{itemize}
        \item $U[x;i,j] = [x;i-1,j-1]$
        \item $M(U[x;i,j])=M([x;i-1,j-1])=M([x;i,j])-2$
        \item $A(U[x;i,j])=A([x;i-1,j-1])=A([x;i,j])-1$
    \end{itemize}
The Maslov and Alexander gradings for $[x;i,j]$ are then given by
\[M([x;i,j])=M(x)+2i,\ \ \  A([x;i,j])=j-i.\]
    
    Graphically, we can represent $CFK^\infty$ on a plane by drawing the element $[x;i,j]$ at $(i,j)$ and the differentials as arrows that point (non-strictly) downwards and to the left, as seen in Figure \ref{fig:RHT}. Multiplication by $U$ decreases the Alexander grading of a generator by 1. The $j$ coordinate is the generator's Alexander grading, and the $i$ coordinate is the negative of its $U$ power. The Maslov grading is not represented in this picture.
    \begin{example}
        The full knot Floer complex for the right-handed trefoil $T_{2,3}$ is depicted in Figure \ref{fig:RHT}. As a $\mathbb{F}[U,U^{-1}]$-vector space, it has three generators $a,b,c$, with differentials
        \[\partial a = Ub+c,\ \ \partial b = \partial c= 0.\]
        The Alexander gradings of $a,b,c$ are given by their $j$-coordinates, which are $1,0,-1$, respectively. The Maslov gradings of $a,b,c$ are $-1, 0,-2$, respectively. The homology of this complex is generated by $[b]=[U^{-1}c]$ over $\mathbb{F}[U,U^{-1}]$.
\begin{figure}[!htb]
    \centering
    \includegraphics[scale =0.7]{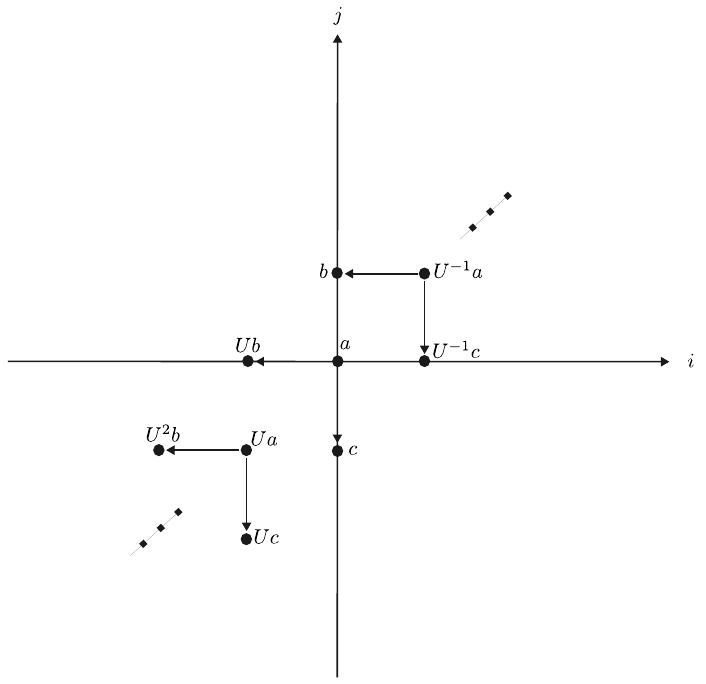}
    \caption{The full knot Floer complex for the right-handed trefoil.}
    \label{fig:RHT}
\end{figure}
\end{example}
    
   We can obtain various flavors of the knot Floer complex by taking different subcomplexes of $CFK^\infty(K)$. For a set $S\subset \mathbb{Z}\oplus \mathbb{Z}$, let $C\{S\}$ be the set of elements in $CFK^\infty(K)$ whose $(i,j)$-coordinates are in $S$. We then define
    \[CFK^-(K)= C\{i\le 0\}\]
    to be the $\mathbb{F}[U]$-module whose elements have non-positive $i$-coordinates, and whose differential is the induced differential. This complex has a natural $\mathbb{Z}$-filtration, induced by the Alexander filtration of $CFK^\infty(K)$. Denote the associated graded of $CFK^-(K)$ by $gCFK^-(K)$, and let the homology of the associated graded be 
    \[HFK^-= H_*(gCFK^-(K)).\]
    \noindent Similarly, we can also take the $\mathbb{Z}$-filtered chain complex 
    \[\widehat{CFK}(K)=C\{i= 0\}\]
    with the induced differential, and denote the homology of the associated graded of $\widehat{CFK}(K)$ by \[\widehat{HFK}(K)= H_*(g\widehat{CFK}).\] This is commonly referred to as the \textit{knot Floer homology} of $K$. While the homology of $\widehat{CFK}(K)$ is always $\mathbb{F}$, the homology of $g\widehat{CFK}(K)$ is more interesting. As a bigraded vector space, the knot Floer homology decomposes as
    \[
    \widehat{HFK}(K) = \bigoplus_{i,j}\widehat{HFK}_i(K,j),
    \]
    where $i$ and $j$ indicate the Maslov and Alexander grading, respectively. It also satisfies symmetry under orientation reversal \cite[Section 3.5]{OS04a}:
\[\widehat{HFK}_i(K,j)=\widehat{HFK}_{i-2j}(K,j).\]
    Furthermore, the knot Floer homology categorifies the Alexander polynomial in the following sense. Its graded Euler characteristic is the Alexander polynomial \cite[Equation $(1)$]{OS04a}:
    \[\Delta_K(t) = \sum_{i,j}(-1)^i\dim\widehat{HFK}_j(K,s)t^s.\]
    \noindent While the Alexander polynomial bounds the Seifert genus of $K$ from below, the knot Floer homology detects the genus \cite{OS04b} by
    \[g(K)=\max\{s|\ \widehat{HFK}(K,s)\ne 0\}. \]
    And whereas the Alexander polynomial obstructs fiberedness, knot Floer homology detects it \cite{Ghi08, Ni07}:
    \[K\textrm{ is fibered }\Longleftrightarrow \widehat{HFK}(K,g(K))=\mathbb{F}.\]

     Note that the full knot Floer complex is defined over the base ring $\mathbb{F}[U,U^{-1}]$. We may define analogously a bigraded chain complex over the base ring $\mathcal{R} = \mathbb{F}[U,V]/UV$, denoted by $CFK_\mathcal{R}(K)$. To obtain $CFK_\mathcal{R}(K)$, we decorate each vertical arrow in $CFK^\infty(K)$ with $V^n$, where $n$ is its vertical length, then set all the $UV$ arrows to 0. In particular, $CFK^\infty(K)$ and $CFK_\mathcal{R}(K)$ have the same set of original generators and differentials, except that for $CFK^\infty(K)$ we extend them linearly over $\mathbb{F}[U,U^{-1}]$, whereas for $CFK_\mathcal{R}(K)$ we extend them linearly over $\mathcal{R}$. We may also take subcomplexes of $CFK_\mathcal{R}(K)$ to obtain different flavors of the knot Floer chain complex over $\mathcal{R}$, such as $\widehat{CFK}_\mathcal{R}(K)$. We will make use of these chain complexes over $\mathcal{R}$ in Section \ref{IMMERSED COMPUTATION}.

    Next, we introduce some concordance invariants arising from the knot Floer complex. Recall that $\widehat{CF}$ is an invariant of 3-manifolds, that $\widehat{HF}$ is the homology of $\widehat{CF}$, and that in particular $\widehat{HF}(S^3)=\mathbb{F}$ (see \cite{OS04b, OS04c}).
    For a knot $K\subset S^3$, the Alexander filtration on $\widehat{CFK}(K)$ induces a spectral sequence that converges to $\widehat{HF}(S^3)$. The invariant $\tau(K)$ is defined to be the Alexander grading of the unique cycle that survives to the $E_\infty$ page. 
    By symmetry of the full knot Floer complex, we have equivalently
     \[\tau(K)=-\max\{s\mid U^n\cdot HFK^-(K,s)\ne 0 \textrm{ for all } n\ge 0\}.\] 
    In other words, $\tau(K)$ is minus the Alexander grading of the non-vanishing generator for $\mathbb{F}[U]$ in $HFK^-(S^3, K)$. This is different from but equivalent to the original definition of $\tau(K)$ in \cite{OS03}; for a proof of the equivalence, see \cite[Lemma A.2]{OS08}.

    Define the \textit{horizontal complex}
    \[C^{\mathrm{horz}}=C\{j=0\}\]
    to be the subquotient complex of $CFK^\infty(K)$, for which the elements have zero $j$-coordinates, and the differential $\partial^{\mathrm{horz}}$ is the induced differential. The horizontal complex has a $\mathbb{Z}$-filtration induced by $\CFK^\infty(K)$. Similarly, we 
    define
    the \textit{vertical complex}
     \[C^{\mathrm{vert}}=C\{i=0\},\]
 to be the subquotient complex along the $i$-axis, which we previously called $\widehat{CFK}(K)$, with induced differential $\partial^{\mathrm{vert}}$. Note that the vertical complex and the horizontal complex are homotopy equivalent.

For any $\mathbb{Z}\oplus \mathbb{Z}$-filtered chain complex $(C, \partial)$, we say that $\{x_i\}$ is a \textit{filtered basis} for $(C, \partial)$ if for all pairs $(a,b)$, the set 
\[\big\{x_i\mid x_i\in C\{i\le a, j\le b\}\big \}\]
is a basis for $C\{i\le a, j\le b\}$. Let $\{\eta_i\}$ be a filtered basis over $\mathbb{F}[U]$ for a \textit{reduced} complex $CFK^-(K)$, that is,  the arrows in $CFK^-(K)$ point strictly downwards or to the left (or both). We say that $\{\eta_i\}$ is \textit{horizontally simplified} if exactly one of the following situations holds for every $\eta_i$:
\begin{enumerate}
    \item $\eta_i\in \im(\partial^{\mathrm{horz}})$, and there exists a unique $\eta_{i-1}$ such that $\partial^{\mathrm{horz}}\eta_{i-1}=\eta_i$. \label{itemone}
    \item $\eta_i\not\in \im(\partial^{\mathrm{horz}})$, but $\eta_i\in \ker(\partial^{\mathrm{horz}})$ 
    \label{itemtwo}
    \item $\eta_i\not\in \ker(\partial^{\mathrm{horz}})$, and $\partial^{\mathrm{horz}}\eta_i=\eta_{i+1}$.
    \label{itemthree}
\end{enumerate}
    Recall that $H_*(C^{\mathrm{horz}})\cong H_*(\widehat{CFK(K)})\cong \mathbb{F}$, i.e. the horizontal complex is generated by one distinguished basis element upon taking homology. After reordering, we call this distinguished element $\eta_0$.

    A vertically simplified basis $\{\xi_j\}$ is defined similarly, replacing $\partial^{\mathrm{horz}}$ by $\partial^{\mathrm{vert}}$. Let $\xi_0$ be the distinguished non-vanishing basis element that generates $H_*(C^{\mathrm{vert}})$ after reordering the basis. Note that if we have a horizontally simplified basis $\{\eta_i\}$, then $\tau(K)=-A(\eta_0).$ If we have a vertically simplified basis $\{\xi_j\}$, then $\tau(K)=A(\xi_0).$

    Hom \cite[Lemma 3.2 and 3.3]{Hom14a} showed that $CFK^-(K)$ is always homotopy equivalent to a reduced chain complex $\mathcal{C}$, which admits a horizontally simplified basis $\{\eta_i\}$ such that some $\eta_i$ is the distinguished element of a vertically simplified basis. We then consider the position of $\eta_i$ in the horizontal complex. It can have three situations, as enumerated above. To each situation we assign the numbers $1$, $0$ and $-1$, respectively; see Figure \ref{fig:epsilon}.

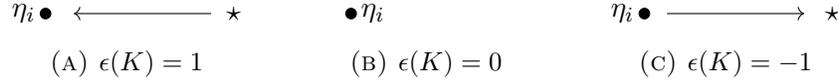
\begin{figure}[!htb]
  \begin{subfigure}{0.2\textwidth}
    \begin{tikzpicture}
      \node at (2.5,0) {$\star$};
      \filldraw (0,0) circle (2pt) node[left] {$\eta_i$};
      \draw[->, shorten >=5pt] (2.2,0) -- (0.2,0);
    \end{tikzpicture}
    \caption{$\epsilon(K)=1$}
  \end{subfigure}
  \hspace{1cm} 
  \begin{subfigure}{0.13\textwidth}
    \begin{tikzpicture}
      \filldraw (3,0)circle (2pt) node[right]{$\eta_i$};
    \end{tikzpicture}
    \caption{$\epsilon(K)=0$}
  \end{subfigure}
  \hspace{1cm} 
  \begin{subfigure}{0.2\textwidth}
    \begin{tikzpicture}
      \filldraw (0,0) circle (2pt) node[left] {$\eta_i$};
      \node at (2.5,0) {$\star$};
      \draw[->, shorten >=5pt] (0.3,0) -- (2.3,0);
    \end{tikzpicture}
    \caption{$\epsilon(K)=-1$}
  \end{subfigure}
  \caption{Three positions of $\eta_i$ in the horizontal complex.}
  \label{fig:epsilon}
\end{figure}
This assignment is well-defined up to concordance, giving us the \textit{$\epsilon$-invariant}, 
\[\epsilon: \mathcal{C}\to \{-1,0,1\}\]
    which satisfies the following properties \cite{Hom14a}:
    \begin{itemize}
        \item $\epsilon(K)= -\epsilon(\Bar{K})$.
        \item If $K$ is alternating, then $\epsilon(K) =\mathrm{sgn}(\tau(K)).$
        \item If $\epsilon(K)=\epsilon(K')$, then $\epsilon(K\# K')=\epsilon(K)=\epsilon(K')$. If $\epsilon(K)=0$, then $\epsilon(K\# K')=\epsilon(K').$
        \item If $\epsilon(K)=0$, then $\tau(K)=0$.
    \end{itemize}
By symmetry, $CFK^-(K)$ also admits a vertically simplified basis $\{\xi_j\}$ such that some $\xi_j$ is the distinguished element of a horizontally simplified basis, and we can similarly obtain the $\epsilon$-invariant by looking at vertical position of $\xi_j$.
\begin{example}
    In the full knot Floer complex of the right-handed trefoil shown in Figure \ref{fig:RHT}, $\{a, b, c\}$ is a vertically simplified basis, with $b$ being the distinguished basis element. So we have $\tau(T_{2,3}) = A(b)=1$. Looking at the horizontal position of $b$, we get $\epsilon(T_{2,3}) = 1$.  
\end{example}
Even though we defined the $\tau$ and $\epsilon$ invariants using the $CFK$ complexes, they can be computed in exactly the same way using the $CFK_\mathcal{R}$ complexes. This is because $CFK^\infty(K)$ and $CFK_\mathcal{R}(K)$ have the same set of generators, and almost the same set of differentials on those generators, except that on $CFK_\mathcal{R}(K)$ we decorate the vertical arrows with the extra $V$ variable. In particular, $CFK^\infty(K)$ and $CFK_\mathcal{R}K$ have the same vertical and horizontal complexes. So if we have a horizontally simplified basis $\{\eta_i\}$ on $CFK_\mathcal{R}(K)$, then $\tau(K)=-A(\eta_0) = A(\xi_0)$ and $\epsilon(K)$ is determined by the vertical position of $\eta_0$. This is the strategy we will employ to determine the $\epsilon$-invariant in Section \ref{IMMERSED COMPUTATION}.

   The $\tau$-invariant and $\epsilon$-invariant often admit a nice decomposition under the satellite operation. For instance, Hom \cite[Theorem 1]{Hom14a} showed that for the $(p,q)$-cable of a knot $K$, denoted $K_{p,q}$, we have 
    \begin{align*}
        \tau(K_{p,q})&= \begin{cases}
        p\tau(K)+\frac{(p-1)(q-1)}{2} & \text { if } \epsilon(K)=1, \text{ or } \epsilon(K)=0 \text{ and } q>0, \\
        p\tau(K)+\frac{(p-1)(q+1)}{2} & \text { if } \epsilon(K)=-1, \text{ or } \epsilon(K)=0 \text{ and } q<0.\\
    \end{cases}\\
    \epsilon(K_{p,q})&=\begin{cases}
     \epsilon(K) & \text { if } \epsilon(K)\ne 0,\\
         -1 & \text { if } q<-1 \textrm{ and }\epsilon(K)=0,\\
        0 & \text { if } |q|=1 \textrm{ and } \epsilon(K)=0,\\ 
         1 & \text { if } q>1 \textrm{ and } \epsilon(K)=0. \end{cases}
   \end{align*}
   Shortly thereafter, Levine gave a formula for the $\tau$ and $\epsilon$ invariants of satellite knots with Mazur pattern $Q$, as stated in Theorem \ref{Levine theorem}:
    \begin{align*}
    &\tau(Q(K))= \begin{cases}\tau(K) & \text { if } \tau(K) \leq 0 \text { and } \epsilon(K) \in\{0,1\}, \\ \tau(K)+1 & \text { if } \tau(K)>0 \text { or } \epsilon(K)=-1.\end{cases}\\
    &\epsilon(Q(K))= \begin{cases}0 & \text { if } \tau(K)=\epsilon(K)=0, \\ 1 & \text { otherwise.}\end{cases}
    \end{align*}
\noindent The main goal of this paper is to find an analogous result for the generalized Mazur patterns $Q_{m,n}$, that is, calculate $\tau(Q_{m,n}(K))$ and  $\epsilon(Q_{m,n}(K))$.

\subsection{Bordered Heegaard Floer homology}\label{review bordered}
The bordered version of Heegaard Floer homology extends the theory to manifolds with boundary. In this section, we give a brief tour of the tools we will use in bordered Heegaard Floer homology; a more comprehensive discussion is given in \cite{LOT18}. For most of this section, we follow the notation in \cite{Hom14a}.

First, we define the algebraic structures in bordered Heegaard Floer homology, such as $A_\infty$-modules and type $D$ structures. Let $\mathcal{A}$ be a unital differential graded algebra over $\mathbb{F}$, equipped with a subalgebra of idempotents $\mathcal{I}\subset \mathcal{A}$ generated by an orthogonal basis that sum up to 1. Let $M$ be a (right) differential graded module over $\mathcal{A}$. Denote $M[n]$ as the module defined by $M[n]_d=M_{d-n}$. We say that $M$ is a (right unital) \textit{$\mathcal{A}_\infty$-module} if the family of right $\mathcal{I}$-actions
\[
m_i: M\otimes \mathcal{A}^{i-1}\to M[2-i],\ \ \  i\ge 1
\]
satisfies the $A_\infty$ relations 
\begin{align*}
0 & =\sum_{i=0}^n m_{n-i+1}\left(m_{i+1}\left(x \otimes a_1 \otimes \cdots \otimes a_i\right) \otimes a_{i+1} \otimes \cdots \otimes a_n\right) \\
& +\sum_{i=1}^n m_{n+1}\left(x \otimes a_1 \otimes \cdots \otimes a_{i-1} \otimes d\left(a_i\right) \otimes a_{i+1} \otimes \cdots \otimes a_n\right) \\
& +\sum_{i=1}^{n-1} m_n\left(x \otimes a_1 \otimes \cdots \otimes a_{i-1} \otimes a_i a_{i+1} \otimes a_{i+2} \otimes \cdots \otimes a_n\right)
\end{align*}
and the unital conditions
\begin{align*}
    m_2(x,1)&=x,\\m_i(x,\cdots, 1,\cdots,)&=0, \ \ \ i>2.
\end{align*}

A \textit{type D structure over $\mathcal{A}$} is a $\mathbb{F}$-vector space $N$, with left $\mathcal{I}$-action satisfying 
\[N=\bigoplus_{i=1}^n\iota_iN,\]
and a map 
\[\delta_1:N\to \mathcal{A}\otimes_{\mathcal{I}}N,\]
satisfying the type $D$ relations
\[(\mu\otimes \Id_N)\circ (\Id_\mathcal{A}\otimes \delta_1)\circ \delta_1+(d\otimes \Id_N)\circ \delta_1 = 0,\]
where $\mu:\mathcal{A}\times \mathcal{A}\to \mathcal{A}$ denotes the multiplication map on $\mathcal{A}$. Inductively, we can define maps on $N$
\[\delta_k: N\to \mathcal{A}^{\otimes_k}\otimes_\mathcal{I}N\]
by setting 
\begin{align*}
    \delta_0&=\Id_N\\
    \delta_i &= \Id_\mathcal{A}^{\otimes_{i-1}}\otimes \delta_1\circ \delta_{i-1}.
\end{align*}

Given an $A_\infty$-module $M$ and a type $D$ structure $N$, we define the \textit{box tensor product} $M\boxtimes N$ to be the $\mathbb{F}$-vector space $M\otimes N$, equipped with the differential map
\[\partial^{\boxtimes}(x\otimes y)=\sum_{k=0}^\infty (m_{k+1}\otimes \Id_N)(x\otimes \delta_k(y)).\]

    Let $Y$ be a compact, oriented $3$-manifold with connected boundary $\partial Y=F$. To the surface $F$, we associate a differential graded algebra $\mathcal{A}(F)$. To the $3$-manifold $Y$,
    we associate two invariants: $\widehat{CFD}(Y)$, which is a type $D$ structure, and $\widehat{CFA}(Y)$, which is a right $\mathcal{A}_{\infty}$-module over $\mathcal{A}(F)$. To a knot $K_1$ in $Y_1$, we may associate either $\widehat{CFA}(Y_1,K_1)$, which is $\mathbb{Z}$-filtered $\mathcal{A}_\infty$-module, or $CFA^{-}(Y_1,K_1)$, which is an $\mathcal{A}_\infty$-module over $\mathcal{A}(F)$ with ground ring $\mathbb{F}[U]$. The pairing theorem \cite[Theorem 1.3 and 11.19]{LOT18} states that gluing 3-manifolds along their boundaries corresponds to taking the box tensor of their invariants. More concretely, let $Y_1$ and $Y_2$ be compact oriented 3-manifolds with boundary, along with an orientation-reversing diffeomorphism $f$ from $\partial Y_1$ to $\partial Y_2$. Let $Y=Y_1\cup_f Y_2$ be the 3-manifold obtained by gluing $Y_1$ and $Y_2$. Then there exists a homotopy equivalence 
    \[\widehat{CF}(Y)\simeq \widehat{CFA}(Y_1)\boxtimes \widehat{CFD}(Y_2).\]
    Moreover, if we have a knot $K_1\subset Y_1$ whose image $K\subset Y$ under identification is null-homologous, then we get a homotopy equivalence of $\mathbb{F}[U]$-modules
     \[
    gCFK^-(Y,K)\simeq CFA^-(Y_1,K_1)\boxtimes \widehat{CFD}(Y_2),
    \]
    where $gCFK^-(Y, K)$ denotes the associated graded of $CFK^-(Y,K)$. We will discuss these invariants in more detail in Sections \ref{CFA hat} and \ref{sec: CFD hat}.   
    \begin{remark}
    There is an alternate version of the pairing theorem, which gives a homotopy of $\mathbb{Z}$-filtered chain complexes
    \[\widehat{CFK}(Y,K)\simeq \widehat{CFA}(Y_1, K_1)\boxtimes \widehat{CFD}(Y_2),\]
    where $\widehat{CFA}$ is the hat version of $CFA^{-}$. For more detail, see \cite{LOT18}.
    \end{remark}
    
    In this paper, we are concerned with satellite knots, so we restrict our discussion to the case where $F$ is the torus $T^2$. We  let $Y_1$ be the solid torus $V$ equipped with a pattern knot $P$, and $Y_2$ be the bordered manifold $X_K= S^3\backslash N^{\mathrm{o}}(K)$ with the bordered structure given by the $0$-framing (where $N^{\mathrm{o}}(K)$ denotes a regular neighborhood of $K$). Upon gluing $Y_1$ and $Y_2$, the knot $K\subset Y$ becomes the satellite knot $P(K)$ inside of $S^3$. The pairing theorem then tells us that
    \begin{equation} \label{pairing equation}  
    gCFK^{-}(P(K))\simeq CFA^{-}(V,P)\boxtimes \widehat{CFD}(X_K).
    \end{equation}
    Setting $P$ to be the generalized Mazur patterns, we see that computing $gCFK^-(Q_{m,n}(K))$ reduces to finding the $A_\infty$-module for the generalized Mazur patterns and the type $D$ structure for $X_K$. We discuss how one may obtain these two components in Sections \ref{CFA hat} and \ref{sec: CFD hat}.

Next, we describe the graded algebra $\mathcal{A}(F)$ in the case where $F=T^2$; in the general case where $g(F)\ge 2$, $\mathcal{A}(F)$ is a differential graded algebra, as discussed in \cite{LOT18}. Recall that one may specify a torus via handle decomposition: a disk $D^2$ with two $1$-handles attached, such that the boundary is connected and can be capped off with a disk. We represent this information by a \textit{pointed matched circle} $(Z,z,\{a_1,a_3\},\{a_2,a_4\})$, where $Z$ is an oriented circle, $z\in Z$ is a fixed basepoint, and $\{a_1,a_3\}, \{a_2,a_4\}$ are two pairs of points on $Z$ disjoint from $z$ (see Figure \ref{fig: PMC}). We may recover the torus from the pointed matched circle; see \cite[Construction 1.2]{LOT14}. 

\begin{figure}[!htb]
    \centering
    \includegraphics[scale=0.5]{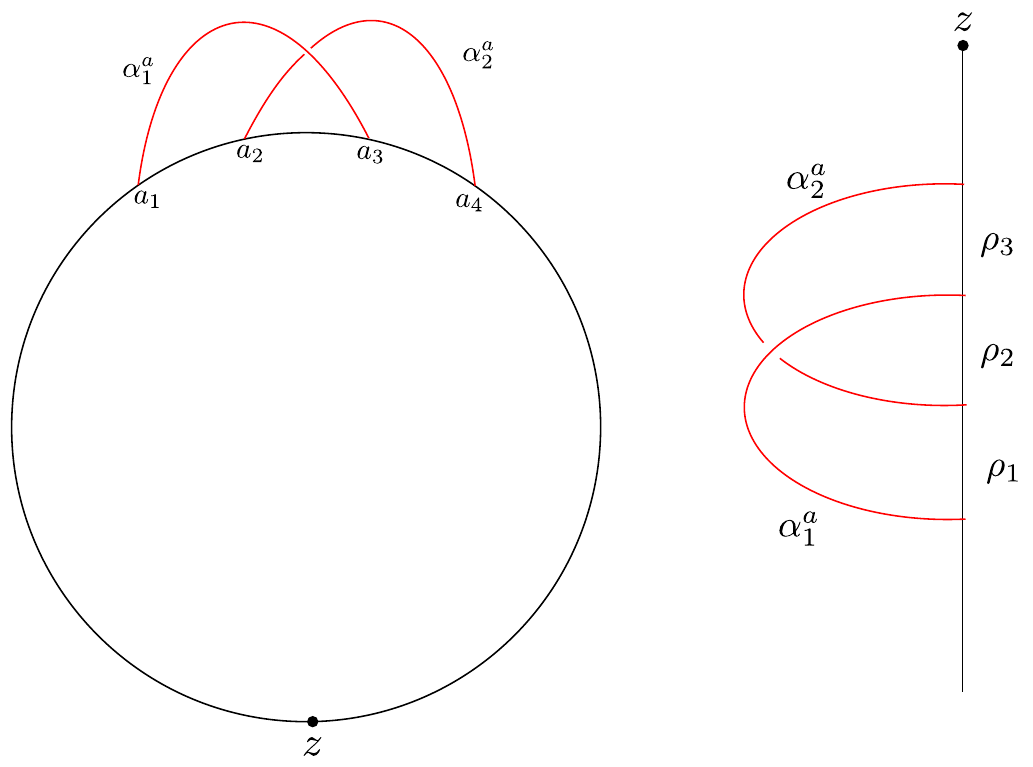}
    \caption{Left: the pointed matched circle for the surface $T^2$. Right: the same pointed matched circle cut open at $z.$}
    \label{fig: PMC}
\end{figure}
Given a torus $T^2$ parametrized by a pointed matched circle, the graded algebra $\mathcal{A}(T^2)$ is generated over $\mathbb{F}$ by two idempotents $\iota_0$ and $\iota_1$ (where $\iota_0+\iota_1=1$) and the six ``Reeb" elements $\rho_1$, $\rho_2$, $\rho_3$, $\rho_{12}$, $\rho_{23}$, $\rho_{123}$, satisfying the comptability conditions
\[
\begin{array}{ccc}
\rho_1=\iota_1 \rho_1=\rho_1 \iota_2 & \rho_2=\iota_2 \rho_2=\rho_2 \iota_1 & \rho_3=\iota_1 \rho_3=\rho_3 \iota_2 \\
\rho_{12}=\iota_1 \rho_{12}=\rho_{12} \iota_1 & \rho_{23}=\iota_2 \rho_{23}=\rho_{23} \iota_2 & \rho_{123}=\iota_1 \rho_{123}=\rho_{123} \iota_2
\end{array}
\]
and the nonzero products 
\begin{equation}\label{algebra}
\begin{array}{ccc}
\rho_1\rho_2 = \rho_{12}, &\rho_2\rho_3 = \rho_{23}, &\rho_1\rho_2\rho_3 = \rho_{12}\rho_3 = \rho_1\rho_{23} = \rho_{123}.
\end{array}
\end{equation}
Schematically, the algebra elements are shown in Figure \ref{fig:PMC}.
\begin{figure}[!htb]
    \centering
    \includegraphics[]{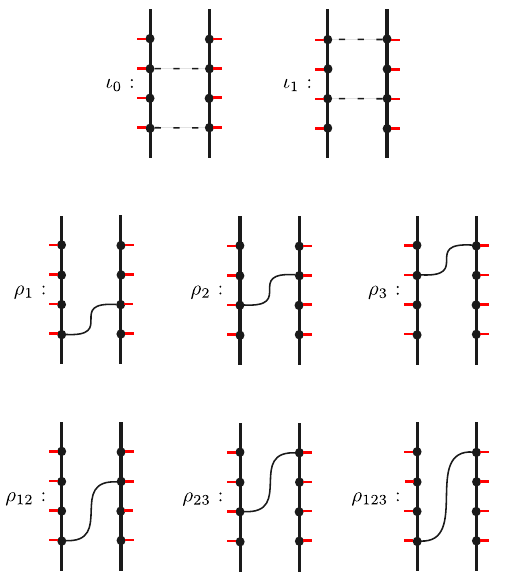}
    \caption{The idempotents and the ``Reeb" elements in $\mathcal{A}(T^2)$}.
    \label{fig:PMC}
\end{figure}

For a three manifold $Y$ with $\partial Y = T^2$, a \textit{doubly pointed bordered Heegaard diagram} for $Y$ is a tuple $\mathcal{H} = (\Bar{\Sigma}, \alpha^c, \alpha^a, \beta, z, w)$ such that 
\begin{itemize}
    \item $\Bar{\Sigma}$ is a compact oriented surface of genus $g$ with a single boundary component.
    \item There is a $(g-1)$-tuple of pairwise disjoint circles $\alpha^c = (\alpha_1^c,\cdots, \alpha_{g}^c)$ in $\Bar{\Sigma}^{\mathrm{o}}$, the interior of $\Bar{\Sigma}$.
    \item There is a pair of disjoint arcs $\alpha^a = (\alpha_1^a, \alpha_2^a)$ in $\Bar{\Sigma}\backslash \alpha^c$ with endpoints on $\partial \Bar{\Sigma}$.
    \item There is a $g$-tuple of pairwise disjoint circles $\beta = (\beta_1,\cdots, \beta_g)$ in $\Bar{\Sigma}^{\mathrm{o}}$. 
    \item $z$ and $w$ are basepoints in $\Bar{\Sigma}\backslash (\alpha^a\cup \alpha^c\cup \beta)$, with $z\in \partial \Bar{\Sigma}$.
\end{itemize}
In addition, we require that the $\alpha$-curves and $\beta$-curves intersect transversely, that the complement of the $\alpha$-curves and the complement of the $\beta$-curves are connected, and that $(\partial \Bar{\Sigma}, z, \partial \alpha_1^a, \partial \alpha_2^a)$ describes a pointed matched circle. Whenever the context is clear, we refer to doubly pointed bordered Heegaard diagrams simply as Heegaard diagrams.

A Heegaard diagram specifies a knot embedded in $Y$ by connecting $w$ to $z$ in the complement of the $\beta$ curves and connecting $z$ to $w$ in the complement of the $\alpha$ arcs. If the two sets of arcs cross, the second set crosses under the first set. Moreover, every knot in a bordered three manifold can be realized by a doubly pointed Heegaard diagram. 

A pattern knot embedded in the solid torus $V =S^1\times D^2$ is called a \textit{$(1,1)$-pattern knot} if it admits a Heegaard diagram with genus 1 \cite{Che19}. All the pattern knots in this paper are $(1,1)$-pattern knots, including the Mazur pattern whose bordered Heegaard diagram was shown in Figure \ref{fig:mazur_diagram} \cite{Lev16}. The arc $\alpha^{a}_{1}$ represents a meridian $\mu_{V} = \{\textrm{pt}\} \times \partial D^2$, and the arc $\alpha^{a}_{2}$ represents a longitude $\lambda_{V} = S^1 \times \{\textrm{pt}\}$. Note that there are no $\alpha$-arcs in the interior of our Heegaard diagrams because their genus is 1. 

\begin{figure}[!htb]
    \centering
    \includegraphics[scale=0.4]{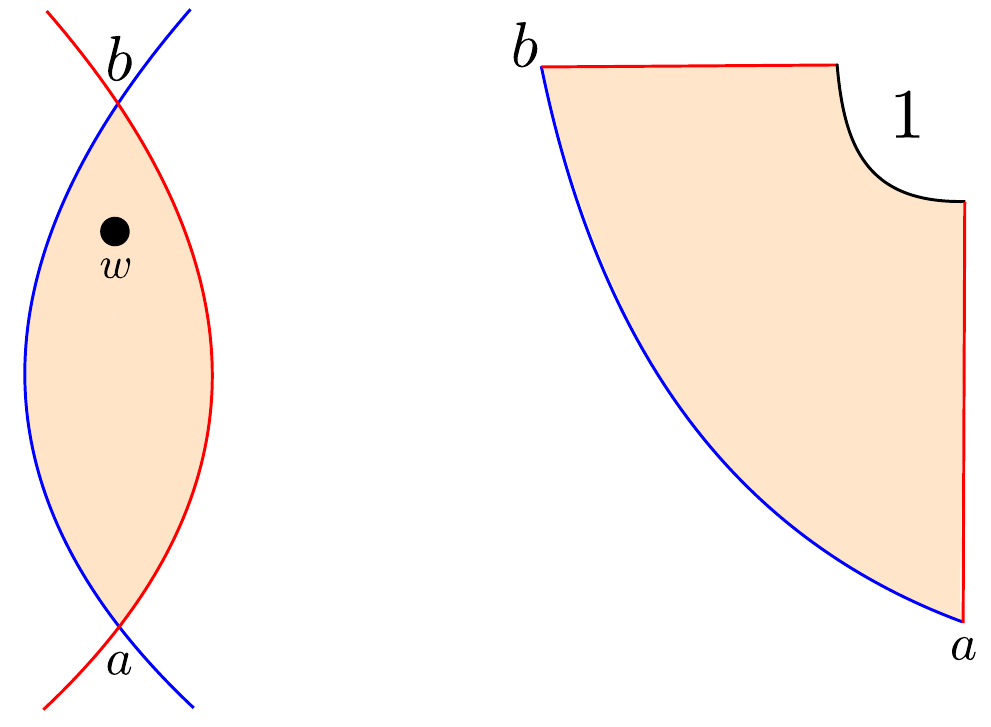}
    \caption{Pseudoholomorphic disks bounded by an $\alpha$-arc (red) and a $\beta$-arc (blue). The $CFA^{-}(V,P)$ relations corresponing to these disks are  $m_1(a) = Ub$ (left) and  $m_2(a,\rho_1) = b$ (right)}
    \label{fig:howtomakecfa}
\end{figure}

\begin{figure}[!htb]
    \centering
    \includegraphics[scale=0.4]{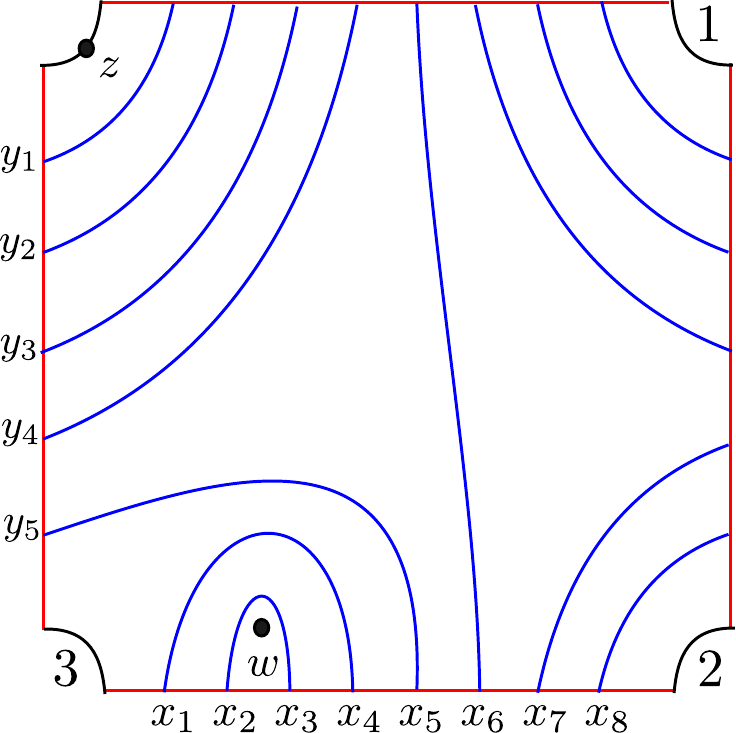}
    \caption{Genus 1 Heegaard diagram for the Mazur pattern $Q$ (equivalently $Q_{2,1}$)}.
    \label{fig:mazur_diagram}
\end{figure}

\subsection{Obtaining type $A$ structures from Heegaard diagrams} \label{CFA hat}
Recall that the pairing theorem states that 
\[gCFK^{-}(Y,K) \simeq CFA^{-}(Y_1,K_1)\boxtimes \widehat{CFD}(Y_2).\]
In this subsection, we outline how one may obtain $CFA^-(V,P)$ from a Heegaard diagram $\mathcal{H}$ associated with $P$, and in the next subsection, we discuss how one determines the type $D$ structure. We may assume $P$ to be a $(1,1)$-pattern knot, since all pattern knots are $(1,1)$ in this paper. As such, the $\mathbb{F}[U]$-vector space $CFA^-(V,P)$ is generated by the set of intersection points of $\alpha$-arcs and $\beta$-arcs, denoted $\mathfrak{S}(\mathcal{H})$. The right $\mathcal{I}$-action is given by 
\begin{align*}
x\cdot \iota_0 &=\begin{cases}
    x & \textrm{if } x \textrm{ lies on the arc } \alpha_1^a,\\
    0 & \textrm{otherwise,}
\end{cases}\\
x\cdot \iota_1 &=\begin{cases}
    x & \textrm{if } x \textrm{ lies on the arc } \alpha_2^a,\\
    0 & \textrm{otherwise.}
\end{cases}
\end{align*}
We may obtain the $A_\infty$-structure on $CFA^-(V,P)$ by counting certain pseudoholomorphic curves, giving us the family of $\mathcal{I}$-actions:
\[m_{j+1}:CFA^-(V,P)\otimes \mathcal{A}^{\otimes j}\to CFA^-(V,P).\]
Specifically, we extend the Heegaard diagram $\mathcal{H}$ to its universal cover, and count the pseudoholomorphic disks $\phi$ that are bounded by an $\alpha$-arc and a $\beta$-arc intersecting acutely at two points in $\mathfrak{S}(\mathcal{H})$. Label the intersection points by $a$ and $b$, such that if we start at $a$ and travel counterclockwise along the boundary, we encounter the $\alpha$-arc first (see Figure \ref{fig:howtomakecfa}). Note that the boundary of $\phi$ may contain parts of the pointed matched circle, which we denote by their corresponding algebra elements 
$\rho_{i_1},\cdots,\rho_{i_j}$ (concatenated when possible per equation \eqref{algebra}), in the order that we encounter travelling counterclockwise from $a$. The relation in $CFA^-(V,P)$ corresponding to $\phi$ is
\[m_{j+1}(a,\rho_{i_1},\cdots,\rho_{i_j})=U^{n_w(\phi)}b,\] where $n_w(\phi)$ is the number of times $w$ appears inside $\phi$. Note that the region bounded by $\phi$ must be a disk, and in particular has trivial fundamental group (this is true only in the genus one case). Furthermore, $\phi$ may not intersect the top left corner of $\mathcal{H}$. In Figure \ref{fig:howtomakecfa}, the corresponding relations are $m_1(a) = Ub$, and  $m_2(a) = b$, respectively. An example of finding these disks and writing their corresponding relations in $CFA^-$ is given in Example \ref{Ex:Q31}.

\subsection{Obtaining type $D$ structures from $CFK^-(K)$} \label{sec: CFD hat}

The module $\widehat{CFD}(X_K)$ may be algorithmically computed in terms of a basis in $CFK^-(K)$. The algorithm in the following theorem is originally due to \cite{LOT18}; we state a slight enhancement of it, due to \cite{Lev16}.

Recall that $CFK^-(K)$ always admit a horizontally and a vertically simplified basis up to chain homotopy. Let $\big\{\widetilde{\xi}_0, \ldots, \widetilde{\xi}_{2 n}\big\}$ and $\big\{\widetilde{{\eta}}_0, \ldots, \widetilde{{\eta}}_{2 n}\big\}$ be its vertically and horizontally simplified basis, respectively. Then the type $D$ structure $\widehat{CFD}(X_K)$ may be described as follows. 

\begin{theorem} \cite[Theorem 2.6]{Lev16}\label{thm algo} Let $K$ be a knot in $S^3$, and let $\big\{\widetilde{\xi}_0, \ldots, \widetilde{\xi}_{2 n}\big\}$ be the vertically and horizontally simplified basis described above. The type D structure $\widehat{\mathrm{CFD}}\left(X_K\right)$ satisfies the following properties:
\begin{itemize}
    \item The summand $\iota_0 \cdot \widehat{\mathrm{CFD}}\left(X_K\right)$ has dimension $2 n+1$, with designated bases $\left\{\xi_0, \ldots, \xi_{2 n}\right\}$ and $\left\{\eta_0, \ldots, \eta_{2 n}\right\}$ related by
$$
\xi_p=\sum_{q=0}^{2 n} a_{p, q} \eta_q \quad \text { and } \quad \eta_p=\sum_{q=0}^{2 n} b_{p, q} \xi_q .
$$
These elements are all homogeneous with respect to the grading by relative spin$^c$ structures.
\item The summand $\iota_1 \cdot \widehat{\mathrm{CFD}}\left(X_K\right)$ has dimension $\sum_{j=1}^n\left(k_j+l_j\right)+s$, where $s=$ $2|\tau(K)|$, with basis
$$
\bigcup_{j=1}^n\left\{\kappa_1^j, \ldots, \kappa_{k_j}^j\right\} \cup \bigcup_{j=1}^n\left\{\lambda_1^j, \ldots, \lambda_{l_j}^j\right\} \cup\left\{\mu_1, \ldots, \mu_s\right\} .
$$
\item For $j=1, \ldots, n$, corresponding to the vertical arrow $\xi_{2 j-1} \rightarrow \xi_{2 j}$, there are coefficient maps
\begin{equation} \label{vertical}
\xi_{2 j} \stackrel{D_{123}}{\longrightarrow} \kappa_1^j \stackrel{D_{23}}{\longrightarrow} \cdots \stackrel{D_{23}}{\longrightarrow} \kappa_{k_j}^j \stackrel{D_1}{\longleftarrow} \xi_{2 j-1} .
\end{equation}
\item For $j=1, \ldots, n$, corresponding to the horizontal arrow ${\eta}_{2 j-1} \rightarrow {\eta}_{2 j}$, there are coefficient maps
\begin{equation} \label{horizontal}
\eta_{2 j-1} \stackrel{D_3}{\longrightarrow} \lambda_1^j \stackrel{D_{23}}{\longrightarrow} \cdots \stackrel{D_{23}}{\longrightarrow} \lambda_{l_j}^j \stackrel{D_2}{\longrightarrow} \eta_{2 j}
\end{equation}
\item Depending on $\tau(K)$, there are additional coefficient maps
\begin{equation}\label{unstable}
\begin{cases}\eta_0 \stackrel{D_3}{\longrightarrow} \mu_1 \stackrel{D_{23}}{\longrightarrow} \cdots \stackrel{D_{23}}{\longrightarrow} \mu_s \stackrel{D_1}{\longleftarrow} \xi_0 & \tau(K)>0 \\ \xi_0 \stackrel{D_{12}}{\longrightarrow} \eta_0 & \tau(K)=0 \\ \xi_0 \stackrel{D_{123}}{\longrightarrow} \mu_1 \stackrel{D_{23}}{\longrightarrow} \cdots \stackrel{D_{23}}{\longrightarrow} \mu_0 \stackrel{D_2}{\longrightarrow} \eta_0 & \tau(K)<0\end{cases}
\end{equation}
\end{itemize}
\end{theorem}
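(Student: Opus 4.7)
The plan is to invoke the structural correspondence of Lipshitz-Ozsv\'ath-Thurston \cite{LOT18} that extracts $\widehat{\CFD}(X_K)$ from a bordered Heegaard diagram for $X_K$ with the $0$-framed parametrization, and then combine it with Hom's simultaneous simplification procedure \cite{Hom14a} to upgrade the resulting presentation to the compatible form stated in the theorem. The general LOT algorithm already produces, from a reduced model of $\CFK^-(K)$, a type $D$ structure in which the $\iota_0$-summand corresponds to the basis of the reduced complex, the $\iota_1$-summand is indexed by intermediate generators paired with the vertical and horizontal differentials, and the coefficient maps $D_I$ are determined by the lengths of the arrows and the sign of $\tau(K)$.

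First, I would apply Hom's structure theorem to replace $\CFK^-(K)$ by a chain-homotopy-equivalent reduced complex $\mathcal{C}$ admitting simultaneously a horizontally simplified basis $\{\widetilde{\eta}_i\}$ and a vertically simplified basis $\{\widetilde{\xi}_i\}$, with a distinguished element that lies in both bases. Next, identify $\iota_0 \cdot \widehat{\CFD}(X_K)$ with this $(2n+1)$-dimensional $\mathbb{F}$-vector space, extracting the bases $\{\xi_p\}$ and $\{\eta_p\}$ together with the change-of-basis matrices $a_{p,q}$ and $b_{p,q}$, all of which respect the relative $\spinc$ grading. Then, translate each vertical arrow $\widetilde{\xi}_{2j-1} \to \widetilde{\xi}_{2j}$ of Alexander-length $k_j$ into the chain of coefficient maps \eqref{vertical}, and each horizontal arrow $\widetilde{\eta}_{2j-1} \to \widetilde{\eta}_{2j}$ of length $l_j$ into \eqref{horizontal}. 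The Reeb-chord labels here reflect how holomorphic bigons in the underlying Heegaard diagram wrap around the meridian (contributing $\rho_1$ or $\rho_{123}$) versus the longitude (contributing $\rho_2$ or $\rho_3$).

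The most delicate step is handling the unstable chain connecting the distinguished generators $\xi_0$ and $\eta_0$. Its shape depends on $\tau(K)$: when $\tau(K) > 0$, the Alexander gradings force a chain of $s = 2\tau(K)$ intermediate generators labelled $\rho_3, \rho_{23}, \ldots, \rho_{23}, \rho_1$; when $\tau(K) < 0$, the orientation of the chain is reversed with labels $\rho_{123}, \rho_{23}, \ldots, \rho_{23}, \rho_2$; when $\tau(K) = 0$, the distinguished elements coincide in homology and are directly linked by a single $D_{12}$ arrow. The Reeb-chord labels in each case are determined by analyzing the holomorphic disks near the basepoints of a minimal Heegaard diagram for $X_K$, which is the essential content of LOT's genus-one computation.

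The main obstacle is ensuring that a single reduced model of $\CFK^-(K)$ can be chosen in which both the horizontal and vertical simplifications hold compatibly with the distinguished element, so that the unstable chain is forced to take the stated form. This is precisely what Hom's iterative cancellation procedure \cite[Lemmas 3.2--3.3]{Hom14a} accomplishes; once such a model is in hand, the enumerated coefficient maps in \eqref{vertical}, \eqref{horizontal}, and \eqref{unstable} follow from a direct translation of the $\CFK^-$ differentials into Reeb-chord sequences, and the verification that no spurious coefficient maps appear between the $\xi, \eta, \kappa, \lambda, \mu$ generators reduces to a relative $\spinc$-grading count combined with the type $D$ structure relation.
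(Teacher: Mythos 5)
This statement is quoted verbatim from \cite[Theorem 2.6]{Lev16} and the paper offers no proof of it; Levine in turn obtains it from the Lipshitz--Ozsv\'ath--Thurston algorithm (\cite[Theorem A.11]{LOT18}) together with the observation that the generators may be taken homogeneous in the relative $\spinc$ (Alexander) grading. Your proposal correctly names these ingredients, but as written it is a proof by citation dressed up as an argument: the entire content of the coefficient maps \eqref{vertical}, \eqref{horizontal}, \eqref{unstable} --- which arrows occur, with which Reeb-chord labels, and that no others occur --- \emph{is} LOT's theorem, so ``translate each arrow into a chain of coefficient maps'' restates the conclusion rather than deriving it. If you intend to defer to LOT, say so and the proof is one line; if you intend to prove it, the substance is missing.

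Two of the places where you do try to supply substance are off. First, the ``most delicate step'' you identify --- producing a single reduced model in which the horizontal and vertical simplifications hold compatibly with a common distinguished element, via \cite[Lemmas 3.2--3.3]{Hom14a} --- is not what this theorem requires. The statement only asks for a vertically simplified basis $\{\xi_p\}$ and a \emph{separate} horizontally simplified basis $\{\eta_p\}$ related by change-of-basis matrices $a_{p,q}$, $b_{p,q}$; each exists by elementary filtered row reduction over $\mathbb{F}[U]$. Hom's simultaneous-compatibility lemmas are used elsewhere in this paper (to define and compute $\epsilon$), not here. Second, ruling out ``spurious coefficient maps'' by ``a relative $\spinc$-grading count combined with the type $D$ structure relation'' does not work: for instance, $D_{23}$ arrows between $\iota_1$-generators belonging to different vertical or horizontal chains can be perfectly compatible with the gradings and the structure equation. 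The absence of extra maps is part of what \cite{LOT18} prove (via arced bordered diagrams and the large-surgery stabilization argument, not a direct count of holomorphic disks on a genus-one diagram for $X_K$ as you suggest), and cannot be recovered by the a posteriori check you describe.
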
 
We refer to the subspaces of $\widehat{CFD}(X_K)$ spanned by the generators in \eqref{vertical}, \eqref{horizontal}, and \eqref{unstable} as the \textit{vertical chains, horizontal chains,} and \textit{unstable chain}, respectively. 
    
For example, running the algorithm for the right-handed trefoil $T_{2,3}$, we obtain the type $D$ structure for its complement in $S^3$, as shown in Figure \ref{fig:CFD of T_{2,3}}. The basis for $\iota_0\cdot \widehat{CFD}(X_{T_{2,3}})$ is $\{\xi_0,\xi_1,\xi_2\}$, and the basis for $\iota_1\cdot \widehat{CFD}(X_{T_{2,3}})$ is $\{\kappa, \lambda, \mu_1,\mu_2\}$. 

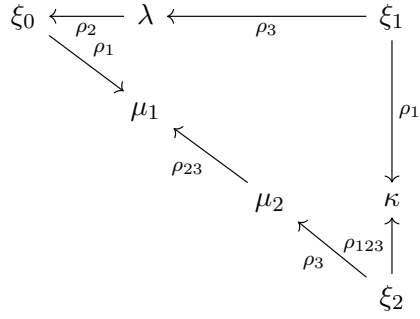
\begin{figure}[!htb]
\begin{tikzcd}
    \xi_0 \arrow[dr,"\rho_1"]
    & \lambda \arrow[l, "\rho_2"] & & \xi_1 \arrow[ll, "\rho_3"] \arrow[dd, "\rho_1"]\\
      & \mu_1 & &\\
    & & \mu_2 \arrow[ul, "\rho_{23}"] & \kappa\\
     & & & \xi_2 \arrow[ul,"\rho_3"] \arrow[u, "\rho_{123}"]
\end{tikzcd}
\caption{Obtaining $\widehat{CFD}$ form $CFK^-$ for the right-handed trefoil complement}.
\label{fig:CFD of T_{2,3}}
\end{figure}

In general, we may use Theorem \ref{thm algo} to determine the structure for $\widehat{CFD}(X_K)$, where $K$ is an arbitrary knot in $S^3$. The type D structure when $\tau(K) > 0$ is displayed in Figure \ref{fig:unstablechaint>0}, and the type D structures when $\tau(K) \le 0$ are determined similarly. In practice, we are concerned with finding a particular summand of the box tensor $CFA^{-}(V,P)\boxtimes \widehat{CFD}(X_K)$. For this purpose, we will only need to consider a neighborhood of the unstable chain in $\widehat{CFD}(X_K)$, which we call the \textit{unstable neighborhood.}

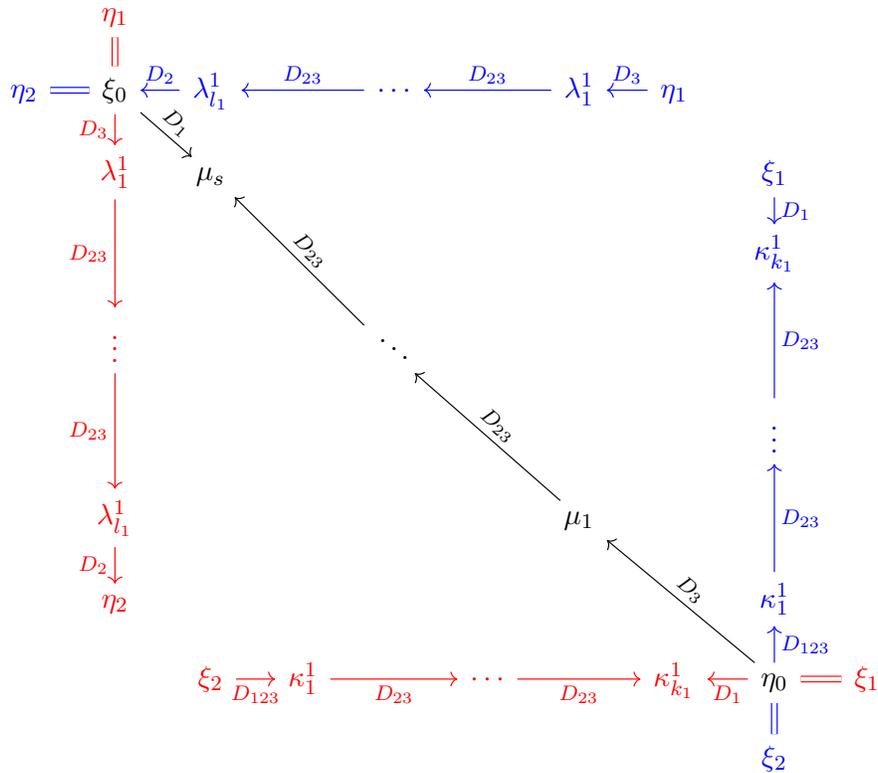
\begin{figure}[!htb]
\begin{tikzcd}[sep = small]
& \color{red}\eta_1 \arrow[equal,red]{d}
\\
\color{blue}\eta_2 \arrow[equal,blue]{r} & \xi_0  \arrow[d,swap, "D_3",red] \arrow[rd, "D_1",sloped] & \color{blue}\lambda^{1}_{l_1} \arrow[l, swap, "D_2", blue] & & \color{blue}\cdots \arrow[ll, swap, "D_{23}", blue] & & \color{blue}\lambda^{1}_{1} \arrow[ll, swap, "D_{23}", blue] & \color{blue}\eta_1 \arrow[l, swap, "D_3", blue]
\\
& \color{red}\lambda^{1}_{1} \arrow[dd, swap, "D_{23}",red]  & \mu_{s} & & & & & & \color{blue}\xi_1 \arrow[d, "D_1", blue]
\\
& & & & & & & & \color{blue}\kappa^{1}_{k_1}
\\
& \color{red}\vdots \arrow[dd, swap, "D_{23}",red] & & & \ddots \arrow[uull, "D_{23}",sloped]
\\
& & & & & & & & \color{blue}\vdots \arrow[uu, "D_{23}", blue, swap]
\\
& \color{red}\lambda^{1}_{l_1} \arrow[d, "D_2", swap, red] & & & & & \mu_1 \arrow[uull, "D_{23}",sloped]
\\
& \color{red}\eta_2 & & & & & & & \color{blue}\kappa^{1}_{1} \color{blue}\arrow[uu, "D_{23}", blue, swap]
\\
& & \color{red}\xi_2 \arrow[r, swap, "D_{123}",red] & \color{red}\kappa^{1}_{1} \arrow[rr, swap, "D_{23}", red] & & \color{red}\cdots \arrow[rr, swap, "D_{23}", red] & & \color{red}\kappa^{1}_{k_1} & \eta_0 \arrow[l, "D_1", red] \arrow[uull, "D_3",sloped] \arrow[equal,red]{r} \arrow[u, "D_{123}", swap, blue] & \color{red}\xi_1
\\
& & & & & & & & \color{blue}\xi_2 \arrow[equal,blue]{u}
\end{tikzcd}
\caption{Neighborhood of the unstable chain of $\widehat{CFD}(X_K)$ for $\tau(K)>0$. When $\epsilon =-1$, the unstable neighborhood is the left and bottom edges of the diagram square, colored in red. When $\epsilon =1$, the unstable neighborhood is the left and bottom edges of the diagram square, colored in blue.}
\label{fig:unstablechaint>0}
\end{figure}

\section{Immersed Heegaard Floer homology} \label{intro immersed}

 We introduce the immersed curve interpretation of the pairing theorem in \cite[Theorem 1.3]{LOT18}. In Section \ref{typeD to immersed}, we sketch out how one may convert a type $D$ structure to an immersed curve in the torus $T^2$. In Section \ref{pairing diagrams}, we introduce the notion of a pairing diagram and the method to determine Alexander grading. This gives us an algorithmic strategy to compute the $\tau$ and $\epsilon$ invariant of the satellite $P(K)$, which is sketched out in Section \ref{isotopy explained}.

\subsection{From type $D$ structures to immersed curves} \label{typeD to immersed} Given a type D structure $N=\widehat{CFD}(X_K)$ over the torus algebra $\mathcal{A}$, we can convert it into an immersed curve in the torus $T^2$ using an algorithm described in \cite[Sections 2.3 and 2.4]{HRW23}. 

The first step is to convert $N$ into an $\mathcal{A}$-decorated graph $\Gamma$, which is a directed graph whose vertex is either $\bullet$ or $\circ$, and whose edges are labeled with one element from $\{\emptyset, 1,2,3,12,23,123\}$, along with additional requirements specified in \cite[Section 2.4]{HRW23}. The conversion is given as follows. For each generator of $N$ which is either in $\iota_0$ or $\iota_1$, we put a $\bullet$ or a $\circ$ in $\Gamma$, respectively. Suppose that there are two vertices corresponding to generators $x$ and $y$ such that $\rho_I\otimes y$ is a summand of $\delta_1(x)$; in this case, we put a corresponding edge labeled by $I$ from $x$ to $y$ in $\Gamma$. The higher differentials $\delta_k$ in the type D structure then correspond to directed paths in $\Gamma$.  We say a decorated graph (and its associated type D structure) is \textit{reduced} if there are no edges labeled by $\emptyset$. 

Next, we convert the $\mathcal{A}$-decorated graph $\Gamma$ into an immersed train track. Let $T$ be a torus $\mathbb{R}^2/\mathbb{Z}^2$ punctured at $z=(1-\epsilon, 1-\epsilon)$, and let $\mu$ and $\lambda$ be the images of the $x$ and $y$ axes, respectively. We embed the vertices of $\Gamma$ so that the $\bullet$'s are distinct points in the interval $\{0\}\times [\frac{1}{4},\frac{3}{4}]$ within $\lambda$, and the $\circ$'s are distinct points in the interval $[\frac{1}{4},\frac{3}{4}] \times \{0\}$ within $\mu$. Then, we embed the edges of $\Gamma$ according to Figure \ref{fig: CFD to train track}. 
\begin{figure}[!htb]
    \centering
    \includegraphics[]{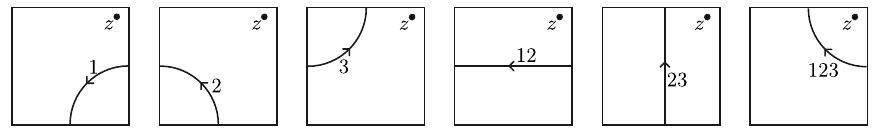}
    \caption{Correspondence between the edges of the decorated graph $\Sigma$ and directed edges in the punctured torus $T$.}
    \label{fig: CFD to train track}
\end{figure}
This gives us an immersion from $\Gamma$ to $T$, which we denote as $\alpha_K$. We further require that all intersections in the image of $\alpha_K$ are transversal, and edges intersect in points that are away from $\lambda$ and $\mu$. In this case, we call $\alpha_K$ the \textit{immersed train track} associated with $K$. Figure \ref{fig:immersed train track} shows the immersed train track associated with the right-handed trefoil.
\begin{figure}[!htb]
    \centering
    \includegraphics[scale = 0.5]{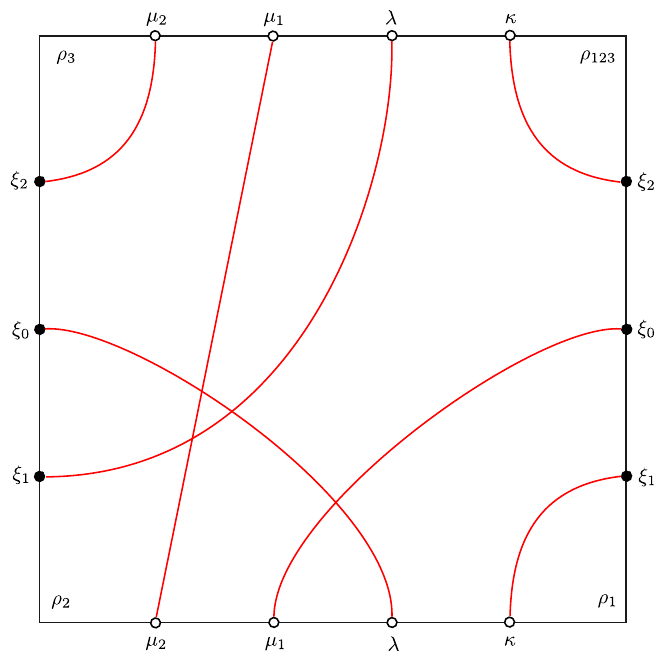}
    \caption{Immersed train track associated with the right-handed trefoil.}
    \label{fig:immersed train track}
\end{figure}

For an arbitrary knot $K$, we know how to compute $\widehat{CFD}(X_K)$ from Proposition \ref{thm algo} (also see Figure \ref{fig:unstablechaint>0}). Given the algorithm converting $\widehat{CFD}$ to immersed curves stated above, we may describe the shape of $\alpha_K$ when lifted to the universal cover. First, we describe the segment in $\alpha_K$
corresponding to the unstable chain, which we call the \textit{unstable segment} of $\alpha_K$.

\begin{lemma}\cite[Lemma 2.2]{Bod23} \label{unstable segment shape}
    Suppose that $K$ is a knot in $S^3$ and that $\gamma_0$ is the unstable segment of $\alpha_K$ lifted to the universal cover.
    \begin{itemize}
        \item If $\epsilon(K) = 1$ and $\tau(K) \geq 0$, $\gamma_0$ slopes upwards for $2\tau(K)$ rows and turns down at the top and up at the bottom.
        \item If $\epsilon(K) = -1$ and $\tau(K) \geq 0$, $\gamma_0$ slopes upwards for $2\tau(K)$ rows and turns up at the top and down at the bottom.
        \item If $\epsilon(K) = 1$ and $\tau(K) \leq 0$, $\gamma_0$ slopes downwards for $2\tau(K)$ rows and turns down at the bottom and up at the top.
        \item If $\epsilon(K) = -1$ and $\tau(K) \leq 0$, $\gamma_0$ slopes downwards for $2\tau(K)$ rows and turns up at the bottom and down at the top.
        \item If $\epsilon(K) = 0$, then $\tau(K) = 0$ and $\gamma_0$ is horizontal at height $0$.
    \end{itemize}
\end{lemma}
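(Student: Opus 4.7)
The plan is to apply the algorithm of Hanselman--Rasmussen--Watson (Section \ref{typeD to immersed}) that converts a type $D$ structure into an immersed train track, specialized to the unstable chain described in Theorem \ref{thm algo} together with the small neighborhood depicted in Figure \ref{fig:unstablechaint>0}. The lemma then follows from a case-by-case analysis on the sign of $\tau(K)$ and the value of $\epsilon(K)$.

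First I would handle the bulk of the unstable segment. In every case with $\tau(K)\neq 0$, equation \eqref{unstable} contains exactly $s=2|\tau(K)|$ consecutive $D_{23}$-arrows. By the embedding rules of Figure \ref{fig: CFD to train track}, each such arrow produces an arc pivoting once around a lattice puncture in the universal cover, shifting the arc by one row. Hence the unstable segment traverses exactly $2|\tau(K)|$ rows. The direction of this motion (upward for $\tau>0$, downward for $\tau<0$) is determined by the labels that book-end the chain, namely $D_1,D_3$ for $\tau>0$ versus $D_{123},D_2$ for $\tau<0$; this can be read off by tracking the local picture at a single $D_{23}$-edge.

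Next, I would analyze the turning behavior at the two endpoints $\xi_0$ and $\eta_0$. The endpoints carry additional edges coming from the adjacent horizontal and vertical chains, and the configuration of these edges is governed by $\epsilon(K)$ through the change-of-basis relating $\{\xi_p\}$ and $\{\eta_p\}$ in Theorem \ref{thm algo}. In the neighborhood diagram of Figure \ref{fig:unstablechaint>0}: if $\epsilon(K)=1$, the relevant endpoint plays the role of a terminal vertex in an adjacent chain, contributing an incoming $D_2$ or $D_{123}$ edge; if $\epsilon(K)=-1$, the endpoint plays the role of an initial vertex, contributing an outgoing $D_3$ or $D_1$ edge. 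Applying the embedding rules of Figure \ref{fig: CFD to train track} locally then determines whether the immersed arc turns upward or downward at each end. For example, when $\tau(K)>0$ and $\epsilon(K)=1$, the incoming $D_2$ at $\xi_0$ combined with the incoming $D_1$ from $\mu_s$ forces the arc to fold back downward at the top of the segment, matching the claim.

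Finally, when $\tau(K)=0$ one also has $\epsilon(K)=0$, and the unstable chain reduces to the single $D_{12}$-edge $\xi_0\to\eta_0$; by Figure \ref{fig: CFD to train track} this lifts to a horizontal arc at row zero. The main technical obstacle is not any one case but rather the bookkeeping across the four sign-combinations of $\tau(K)$ and $\epsilon(K)$: in each combination one must identify the specific extra Reeb label decorating $\xi_0$ and $\eta_0$ in the $\mathcal{A}$-decorated graph, and then translate that decoration into a turning direction via the local embedding picture. This is routine but requires drawing out the picture in each case; the key check is simply that the \emph{sign} of $\epsilon(K)$ governs whether the curve fold at each end wraps over or under the adjacent puncture.
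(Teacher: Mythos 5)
The paper does not actually prove this lemma: it is imported verbatim from Bodish \cite[Lemma 2.2]{Bod23}, so there is no in-paper argument to compare against. Your proposed route --- feed the unstable chain of Theorem \ref{thm algo} through the Hanselman--Rasmussen--Watson embedding rules of Figure \ref{fig: CFD to train track}, count rows from the $\mu_i$ generators, and read the turning behavior at $\xi_0$ and $\eta_0$ off the adjacent horizontal/vertical chains --- is the standard argument and is essentially what the cited proof does. Your identification of how $\epsilon$ enters is correct: $\epsilon(K)=1$ makes the distinguished element the \emph{target} of a horizontal (resp.\ vertical) arrow, hence an incoming $D_2$ (resp.\ $D_{123}$) at the relevant endpoint, while $\epsilon(K)=-1$ makes it a source with an outgoing $D_3$ (resp.\ $D_1$); this is exactly the red/blue dichotomy of Figure \ref{fig:unstablechaint>0}.

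Two concrete corrections. First, the row count: the chain $\mu_1\to\cdots\to\mu_s$ contains $s-1$ arrows labeled $D_{23}$, not $s$; the $2|\tau(K)|$ row crossings correspond to the $s=2|\tau(K)|$ generators $\mu_i$ themselves, each lying on a distinct integer translate of $\mu$ in the universal cover (equivalently, to the $s-1$ internal $D_{23}$ edges together with the two book-end edges). The conclusion is unaffected, but as stated the count does not close up. Second, and more seriously, the claim ``when $\tau(K)=0$ one also has $\epsilon(K)=0$'' is false; the implication runs the other way ($\epsilon(K)=0\Rightarrow\tau(K)=0$, as listed among the properties of $\epsilon$ in Section \ref{review KHF}). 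There are knots with $\tau=0$ and $\epsilon=\pm1$, and these cases are needed downstream --- Theorem \ref{main thm} has an explicit branch for $\tau(K)\ge 0$, $\epsilon(K)=-1$ that includes $\tau(K)=0$. For such knots the unstable chain is the single $D_{12}$ edge $\xi_0\to\eta_0$, so the segment is indeed horizontal of length zero rows, but the turning at the two ends is still governed by the adjacent chains exactly as in your second paragraph; you should fold $\tau=0$, $\epsilon=\pm1$ into that analysis rather than into the $\epsilon=0$ case. With those repairs the argument is complete.
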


\begin{figure}[!htb]
    \centering
    \includegraphics[scale=0.5]{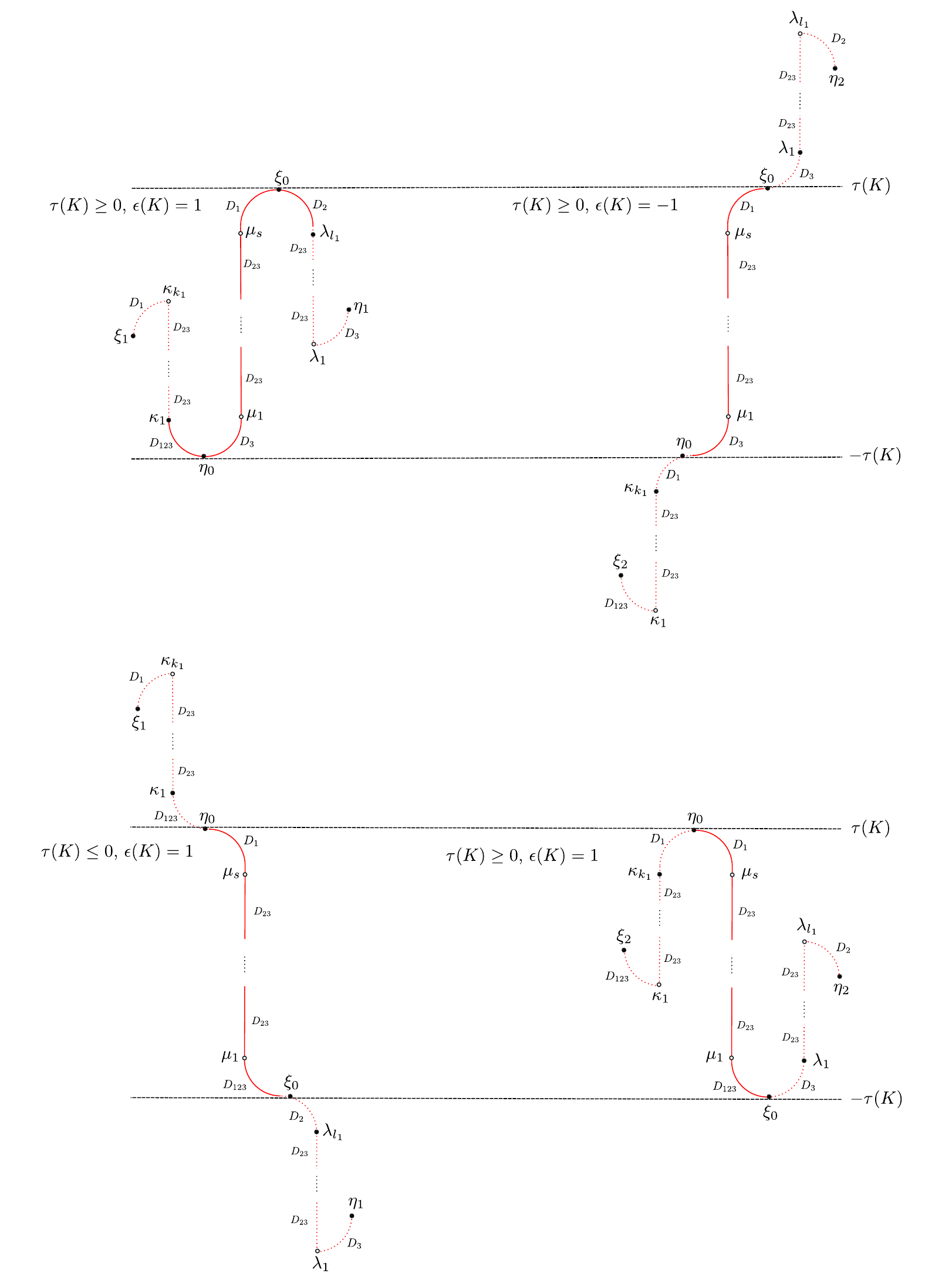}
    \caption{The unstable segment and its neighborhood in both directions in $\alpha_K$. The points $\xi_0$ and $\eta_0$ are at heights $\tau(K)$ and $-\tau(K)$ respectively when $\tau(K) \geq 0$, and the points $\xi_0$ and $\eta_0$ are at heights $-\tau(K)$ and $\tau(K)$ respectively when $\tau(K) \leq 0$. The length of the dotted neighborhood can be arbitrarily large or small}.
    \label{fig:immersedcfdall}
\end{figure}

We may extend the results of Lemma \ref{unstable segment shape} to describe a neighborhood of the unstable segment using the correspondence between elements of $\widehat{CFD}(X_K)$ and their immersed curve representations shown in Figure \ref{fig: CFD to train track}. This is displayed in Figure \ref{fig:immersedcfdall}. For example, the top figure shows the neighborhood of the unstable segment when $\tau(K)\ge 0$, corresponding to the unstable neighborhood in $\widehat{CFD}(X_K)$ appearing previously in Figure \ref{fig:unstablechaint>0}. The bottom figure displays the neighborhood of the unstable segment when $\tau(K)<0$.

\subsection{Pairing diagrams} \label{pairing diagrams}
In this section, we introduce the notion of a \textit{pairing diagram} for the satellite $P(K)$. Let $\alpha_K$ be the immersed curve associated to $K$ as previously described, and let $\beta_P$ be the $\beta$-curve in $\mathcal{H}$, the genus-one Heegaard diagram of $P$. Following the notation in \cite{CH23}, we denote the pairing diagram of $P(K)$ by $\mathcal{H}(\alpha_K)$. Intuitively, the pairing diagram is a way of laying $\beta_P$ over $\alpha_K$ in the torus $T^2=[0,1]^2/\sim$. Specifically, we divide $T^2$ into four quadrants, placing $\alpha_K$ into the first quadrant and $\beta_P$ (along with the $w,z$ basepoints) into the third quadrant. For the second and fourth quadrant, extend both curves horizontally and vertically. See Figure \ref{fig:pairing diagram} for an example of a pairing diagram of the Mazur pattern $Q_{2,1}$ with the right-handed trefoil as companion. The intersection points in the second quadrant correspond to the generators in $CFA^{-}(S^3, P)\boxtimes \widehat{CFD}(S^3, X_K)$ coming from the $\iota_0$ idempotent, and the intersection points in the fourth quadrant correspond to those generators  coming from the $\iota_1$ idempotent. 

\begin{figure}[!htb]
\centering
\begin{minipage}{0.5\textwidth}
  \centering
  \includegraphics{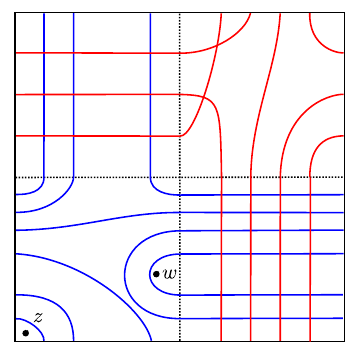}
  \caption{Pairing diagram of $Q_{2,1}(T_{2,3})$.}
  \label{fig:pairing diagram}
\end{minipage}%
\begin{minipage}{0.58\textwidth}
  \centering
  \includegraphics{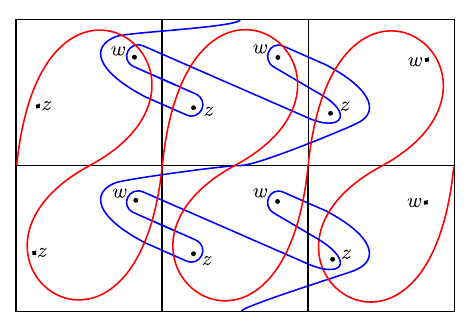}
  \caption{Pairing diagram of $Q_{2,1}(T_{2,3})$ in the universal cover after isotopy.}
  \label{fig:pairing lift}
\end{minipage}
\label{fig:test}
\end{figure}

In practice, we usually draw the pairing diagram $\mathcal{H}(\alpha_K)$ in the universal cover $\pi:\mathbb{R}^2\to \mathbb{R}^2/ \mathbb{Z}^2$, with a single lift of $\alpha_K$ and a single lift of $\beta_P$, which we denote by $\Tilde{\alpha}$ and $\Tilde{\beta}$, respectively. Intuitively, we can imagine the universal cover $\mathbb{R}^2$ as a board with a lattice of pegs nailed in at the $w$ and $z$ basepoints, and the curve $\Tilde{\alpha}$ as a rubber band that winds around those pegs. This curve is called a \textit{peg-board diagram}. When $\Tilde{\alpha}$ is pulled tight, we require that it intersects $\Tilde{\beta}$ transversally, and that every Whitney disk connecting two intersection points must contain at least a $w$ or a $z$ basepoint. Figure \ref{fig:pairing lift} shows the pairing diagram of $Q_{2,1}(T_{2,3})$ in the universal cover after isotopy, so that there are minimal intersection points between $\Tilde{\alpha}$ and $\Tilde{\beta}$.
   
Given a pairing diagram, we may recover the Alexander grading of intersection points representing generators of $gCFK^{-}(P(K))$, which we describe below. First, we can recover the relative Alexander grading with the following lemma. 
\begin{lemma}\cite[Lemma 4.1]{Che19} \label{relative filtration}
    Let $x,y$ be two intersection points of $\alpha$ and $\beta$. Let $l$ be the section of the $\beta$-curve from $x$ to $y$, and let $\delta_{w,z}$ be the straight arc from $w$ to $z$. Then 
    \[A(y)-A(x)=l\cdot\delta_{w,z},\]
    where multiplication on the right counts the algebraic intersection between $l$ and $\delta_{w,z}$.
\end{lemma}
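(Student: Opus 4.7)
The plan is to reduce the lemma to the standard Heegaard Floer formula for the relative Alexander grading,
\[
A(y) - A(x) \;=\; n_z(\phi) - n_w(\phi),
\]
which holds for any 2-chain (domain) $\phi$ on the Heegaard surface $T^2$ whose boundary is built from arcs on $\alpha_K$ and $\beta_P$ running between $x$ and $y$. First I would close up the given $\beta$-arc $l$ by choosing an auxiliary $\alpha$-arc $l'$ from $y$ back to $x$, then take $\phi$ to be a 2-chain with $\partial \phi = l + l'$. The existence of such a $\phi$ is standard once $x$ and $y$ are identified as generators of the knot Floer chain complex in the same relative $\spinc$ class, and $\phi$ is well-defined modulo the fundamental class $[T^2]$, which contributes equally to $n_w$ and $n_z$ and so does not affect their difference.

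The heart of the argument is a Poincar\'e--Lefschetz duality identity: for any 2-chain $\phi$ on $T^2$ with $\partial\phi$ transverse to $\delta_{w,z}$,
\[
n_z(\phi) - n_w(\phi) \;=\; \partial \phi \cdot \delta_{w,z}.
\]
A concrete justification is to cut $T^2$ along $\delta_{w,z}$ and define a locally constant ``layer'' function whose value at a generic point equals the multiplicity of $\phi$ there; the signed count of crossings of $\partial \phi$ with $\delta_{w,z}$ then records precisely the jump of this function between the endpoints $w$ and $z$ of $\delta_{w,z}$.

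To conclude, I would verify that $l' \cdot \delta_{w,z} = 0$. This is where the layout of the pairing diagram enters: the basepoints and the straight arc $\delta_{w,z}$ sit in the third quadrant of the fundamental domain (together with $\beta_P$), while $\alpha_K$ is placed in the first quadrant and its horizontal/vertical extensions avoid the basepoints. An isotopy rel endpoints then allows $l'$ to be chosen disjoint from $\delta_{w,z}$, so $\partial\phi \cdot \delta_{w,z} = l \cdot \delta_{w,z}$ and the lemma follows. The main subtle point will be fixing the orientation conventions so that the sign of the algebraic intersection agrees with the statement; the duality identity itself is local, and the vanishing of $l' \cdot \delta_{w,z}$ is built into the construction of the pairing diagram, so once the signs are pinned down the rest is essentially automatic.
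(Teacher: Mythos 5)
Your argument is correct and is essentially the standard proof of this lemma (the paper does not reprove it but cites \cite{Che19}, whose Lemma 4.1 runs exactly this way: the relative Alexander grading equals $n_z(\phi)-n_w(\phi)$ for a connecting domain $\phi$, this difference equals $\partial\phi\cdot\delta_{w,z}$ by the jump/duality argument, and the $\alpha$-portion of $\partial\phi$ is disjoint from $\delta_{w,z}$ by the layout of the diagram). One small wording fix: the connecting domain exists because $x$ and $y$ represent the unique $\spinc$ structure on $S^3$ (equivalently $H_1(S^3)=0$), not because they lie in the same \emph{relative} $\spinc$ class --- if they did, their Alexander gradings would already coincide and there would be nothing to prove.
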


To determine the absolute Alexander grading, note that the pairing diagram is symmetric under hyperelliptic involution. In other words, the complex remains the same if we rotate it by $\pi$ and exchange the $w$ and $z$ basepoints. The intersection point fixed under this involution must necessarily have Alexander grading 0. From this we may recover the absolute gradings of all intersection points using Lemma \ref{relative filtration}. In Example \ref{Q21+T23}, we use this method to compute the absolute gradings of certain intersection points in the pairing diagram of $Q_{2,1}(T_{2,3})$. 

\subsection{Computation of $\tau(P(K))$ and $\epsilon(P(K))$ from the pairing diagram} \label{isotopy explained}
From the pairing diagram $\mathcal{H}(\alpha_K)$ for the satellite $P(K)$, we may obtain $\tau(P(K))$ and $\epsilon(P(K))$ via a combinatorial computation similar to \cite{CH23}, which we describe here. First, we construct a bigraded chain complex $CFK_\mathcal{R}(\mathcal{H}(\alpha_K),\partial)$ over $\mathcal{R}$ by counting bigons combinatorially in the pairing diagram $\mathcal{H}(\alpha_K)$. Specifically, the generators of the complex are intersections of the $\tilde{\alpha}$ and $\tilde{\beta}$, and the differentials are given by bigons connecting two generators. Suppose there is a bigon $\phi$ bounded by $\tilde{\alpha}$ and $\tilde{\beta}$ intersecting acutely at $a$ and $b$, such that we encounter $\tilde{\alpha}$ first if we travel counterclockwise from $a$. The differential in $CFK_\mathcal{R}(\mathcal{H}(\alpha_K),\partial)$ corresponding to $\phi$ is 
\[\partial a = U^{n_w(\phi)}V^{n_z(\phi)}b,\]
where $n_w(\phi)$, $n_z(\phi)$ are the number of $w$ and $z$ basepoints in $\phi$, respectively. As we are working with $CFK_\mathcal{R}$, we recall that $UV=0$. See \cite[Figure 1]{CH23} for an example of the complex $CFK_\mathcal{R}$ of the $(2,1)$ cable of the right-handed trefoil $T_{2,3}$.

Recall the pairing theorem \cite[Theorem 11.19]{LOT18} which states  \begin{align*}    
gCFK^{-}(Y,K)&\simeq CFA^{-}(Y_1,K_1)\boxtimes \widehat{CFD}(Y_2).
\end{align*} In \cite{CH23}, Chen and Hanselman reinterpreted the pairing theorem in terms of the immersed curves in the case when $CFA^{-}(V, P)$ is paired with $\widehat{CFD}(X_K)$, thus obtaining $gCFK^{-1}(S^3, P(K))$. Here $P$ must be a $(1,1)$-pattern knot, as is the case for generalized Mazur patterns. The main theorem in \cite{CH23} states that there is a bigraded homotopy equivalence of chain complexes
\[CFK_\mathcal{R}(\mathcal{H}(\alpha_K),\mathcal{\partial})\cong CFK_\mathcal{R}(P(K)).\]
In other words, the pairing of $CFA^{-}(V,P)$ with $\widehat{CFD}(X_K)$ is entirely captured in the $CFK_\mathcal{R}$ complex of the pairing diagram. Hence, to recover the $\tau$ and $\epsilon$ invariants of $P(K)$, it suffices to compute the relevant portion of $CFK_\mathcal{R}(\mathcal{H}(\alpha_K),\mathcal{\partial})$.

First, we describe how to recover $\tau(P(K))$, using an algorithm in \cite{Che19}. Recall that the Alexander filtration on $\widehat{CFK}(K)$ induces a spectral sequence converging to $\widehat{HF}(S^3)=\mathbb{F}$, and that the $\tau$-invariant is the Alexander grading of the cycle that survives to the $E_\infty$ page. Passing from one page to the next in the spectral sequence amounts to eliminating the differentials that connect elements of minimal Alexander filtration difference. (For a more detailed discussion of why this is the case, see for example \cite{Bod23, Che19}.)
This can be visualized in the pairing diagram: the differentials correspond to Whitney disks that contain $z$ basepoints but not $w$ basepoints, and eliminating the differential amounts to isotoping the $\beta$-curve over such Whitney disks that connect intersection points with minimal filtration difference. This isotopy is illustrated in Figure \ref{fig: kill intersection}. Each isotopy removes a pair of intersection points, so that at the end of such isotopies, only one intersection point remains. This final intersection point represents the cycle that survives to the $E_\infty$ page, hence its Alexander grading is the $\tau$-invariant of the satellite knot. 

\begin{figure}[!htb]
    \centering
    \includegraphics[scale = 0.8]{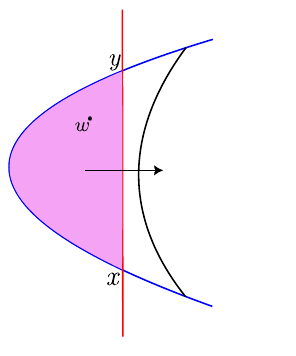}
    \caption{Eliminating a pair of intersection points $x$ and $y$ with minimal filtration difference. The Whitney disks connecting $x$ and $y$ is highlighted in pink. The small black arrow is the A-buoy placed along the $\beta$-curve.}
    \label{fig: kill intersection}
\end{figure}

In practice, we need to remember the filtration difference of the intersection points, and we do so by placing \textit{A-buoys} along the $\beta$-curve, which are small arrows introduced in \cite[Section 4]{Che19}. Figure \ref{fig: kill intersection} illustrates the placement of these A-buoys. Recall that from Lemma \ref{relative filtration} that the relative Alexander grading between two intersection points $x$ and $y$ is given by the algebraic intersection between the arc on the $\beta$-curve connecting $x$ and $y$ with $\delta_{w,z}$. When we perform the isotopies described above, this algebraic intersection may change. To correctly compute the filtration difference, we must count both the algebraic intersection of the corresponding arc on the $\beta$-curve with $\delta_{w,z}$ and the arc with the A-buoys. 

Note that this algorithm corresponds to doing successive pages of a spectral sequence, so we have to eliminate pairs of intersection points in order of their original filtration difference. In other words, we first eliminate all disks connecting points of filtration difference one. Once these are removed, we proceed to eliminate all disks connecting points of filtration difference two, and so on, making sure to place the A-buoys appropriately. 

To compute $\epsilon(P(K))$, we look for a subcomplex of $CFK_{\mathcal{R}}(\mathcal{H}(\alpha_K))$ that contains the cycle surviving to the $E_\infty$ page. This subcomplex must contain the distinguished element of some horizontally simplified basis, whose vertical situation determines the value of $\epsilon(P(K))$. In general, locating such a subcomplex within $CFK_\mathcal{R}(\mathcal{H}(\alpha_K))$ is challenging. However, performing isotopy across $z$ basepoints helps us track down one element in that subcomplex, making it computationally more tractable to recover the entire subcomplex.

After identifying the subcomplex, we simplify it by performing isotopies that reduce the number of elements as much as possible while ensuring that it is reduced. Differentials obtained from crossing over $z$ are called vertical differentials, and are labelled with powers of $V$ corresponding to the multiplicity of $z$. Similarly, differentials obtained from crossing over $w$ are called horizontal differentials, labelled with powers of $U$ corresponding to the multiplicity of $w$. In the horizontal complex consisting of the horizontal differentials, we look for a cycle which survives in the homology $\widehat{HF}(S^3)$. This is the distinguished element whose vertical situation determines $\epsilon(P(K))$, and whose Alexander grading is $\tau(P(K))$.
\begin{example} \label{Q21+T23}
We compute $\tau(Q_{2,1}(T_{2,3}))$ using the algorithm described above. The pairing diagram of the Mazur pattern and the right-handed trefoil is shown in Figure \ref{fig: Mazur+T_{2,3}}. 
\begin{figure}[!htb]
    \centering
    \includegraphics[scale=0.9]{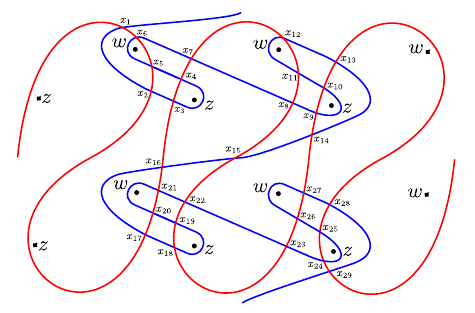}
    \caption{Pairing diagram of $Q_{2,1}(T_{2,3})$.}
    \label{fig: Mazur+T_{2,3}}
\end{figure}
First, we eliminate all pairs of intersection points of filtration difference 1, as shown in Figure \ref{fig: Mazur+T_{2,3}1}. After isotoping the right side of the $\beta$-curve to obtain Figure \ref{fig: Mazur+T_{2,3}2}, there are no more differentials in the new complex, and the only remaining intersection point is $x_3$. By the symmetry under elliptic involution, we see that $A(x_{15})=0$. Then using Lemma \ref{relative filtration}, we have $A(x_3)=-2$, therefore $\tau(Q_{2,1}(T_{2,3}))=-A(x_3) = 2$. This matches up with the bordered computation in \cite[Theorem 1.6]{Lev16}.
To compute $\epsilon(Q_{2,1}(T_{2,3}))$, we refer the reader to \cite{CH23}, where the differentials in the $CFK_{\mathcal{R}}$ complex are given. Note that the Mazur pattern in \cite{CH23} has the opposite orientation, which 
amounts to switching the $w$ and $z$ basepoint in the pairing diagram. The differentials in $CFK_{\mathcal{R}}$ are changed accordingly by switching the $U$ and $V$ powers. From there, one may compute the homology and find that the generator containing $x_3$ has an incoming vertical arrow. Therefore $\epsilon(Q_{2,1}(T_{2,3})) = 1$.

\begin{figure}[!htb]
    \centering
    \includegraphics[scale=0.9]{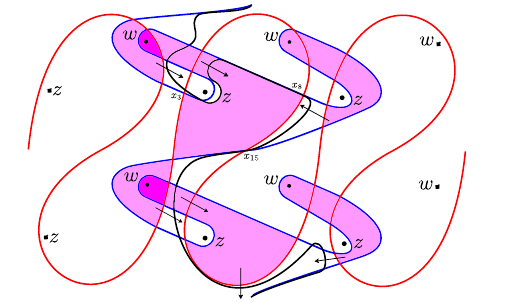}
    \caption{Eliminating pairs of intersection points of filtration difference 1. The Whitney disks connecting those pairs are highlighted in pink. After the isotopies, we get the black curve as our new $\beta$-curve.}
    \label{fig: Mazur+T_{2,3}1}
\end{figure}
\begin{figure}[!htb]
    \centering
    \includegraphics[scale=0.9]{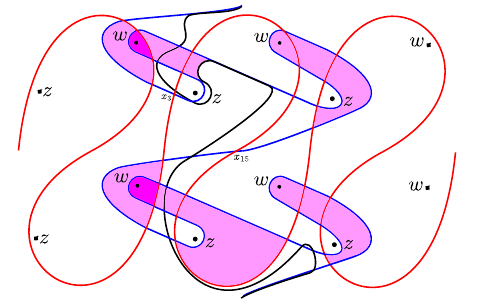}
    \caption{The point $x_3$ is the only intersection point remaining.}
    \label{fig: Mazur+T_{2,3}2}
\end{figure}

\end{example}

\section{Generalized Mazur patterns}\label{general mazur patterns}
In this section, we introduce the generalized Mazur patterns $Q_{m,n}$ and describe some of their properties, including presentations of their associated 2-bridge links using Schubert normal forms. We begin by describing how to construct Heegaard diagrams for $Q_{m,n}$, which will be utilized in our computation of the $\tau$- and $\epsilon$-invariant in Section \ref{IMMERSED COMPUTATION}.

\begin{proposition}
\label{h diagram}
Starting from a Heegaard diagram $H_{1,1}$ for $Q_{1,1}$, we may obtain a Heegaard diagram $H_{m,n}$ for $Q_{m,n}$ by an inductive procedure on $m$ and $n$.
\end{proposition}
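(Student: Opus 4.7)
The plan is to set up the induction with the base case $H_{1,1}$ (a genus-one doubly pointed bordered Heegaard diagram for the Whitehead double pattern $Q_{1,1}$), and then define two local modifications on a diagram $H_{m,n}$: an \emph{$m$-move} that produces $H_{m+1,n}$, and an \emph{$n$-move} that produces $H_{m,n+1}$. Iterating these moves starting from $H_{1,1}$ yields $H_{m,n}$ for every pair $(m,n)$ of positive integers. Since $Q_{m,n}$ is obtained from $Q_{m-1,n}$ (respectively $Q_{m,n-1}$) by adding a single extra wind to the appropriate strand of the clasp of Definition \ref{def gmp}, it suffices to describe the modifications at the level of the $\beta$-curve and verify that the knot recovered from the modified diagram is isotopic, inside $V$, to the pattern with one more wind.

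First I would fix explicit conventions for $H_{1,1}$, drawing the torus as the standard fundamental square with basepoints $w$ and $z$, meridional $\alpha$-arc $\alpha_1^a$ (representing $\mu_V$) and longitudinal $\alpha$-arc $\alpha_2^a$ (representing $\lambda_V$), and the $\beta$-curve $\beta_{1,1}$ drawn in the pattern that recovers $Q_{1,1}$ via the standard procedure: connect $w$ to $z$ in the complement of $\beta$, and $z$ to $w$ in the complement of the $\alpha$-arcs, with the second family of arcs crossing under the first. For the comparison with existing diagrams in the literature, I would check that the $m=2$, $n=1$ case of this construction reproduces the Mazur diagram in Figure \ref{fig:mazur_diagram}.

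Next I would describe the $m$-move and $n$-move as local modifications of $\beta$ inside a small disk near one end of the clasp. The $m$-move splices in a pair of arcs that each cross $\alpha_1^a$ once and $\alpha_2^a$ once, in the order and orientation consistent with an additional overstrand of the clasp; the $n$-move is similar, but with the arcs routed to correspond to an additional understrand. In both cases the move introduces the same number of new intersection points with each $\alpha$-arc as one additional full wind contributes to the pattern, and is supported in a neighborhood disjoint from $w$ and $z$. The key verification is that the knot read off from $H_{m+1,n}$ (respectively $H_{m,n+1}$) using the recovery procedure is isotopic in $V$ to a knot obtained from $Q_{m,n}$ by adding a single wind to the appropriate side of the clasp, which is by definition $Q_{m+1,n}$ (respectively $Q_{m,n+1}$).

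The main obstacle is the bookkeeping at the clasp: Definition \ref{def gmp} prescribes a strict over/under ordering (first $n$ undercrossings, then $m$ overcrossings), and the two moves must preserve this ordering under iteration. The cleanest way to handle this is to insert the new arcs of the $m$-move on the outer side of the existing $m$-strands in the diagram and the arcs of the $n$-move on the outer side of the $n$-strands, so that the order of crossings in the recovered knot is correct by construction. Once this is arranged, the verification reduces to a planar isotopy in the recovered picture, after which Proposition \ref{qmn isotopic} (invariance under swapping $m \leftrightarrow n$) also becomes visible from the diagrammatic symmetry of the two moves.
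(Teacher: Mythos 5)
Your overall plan---build $H_{m,n}$ from $H_{1,1}$ by repeatedly applying an ``add one wind'' move in each direction---matches the shape of the construction in the paper, but there are two concrete problems with the proposal as written. First, the moves cannot be local splices of bounded size. In the actual diagrams the number of intersection points of $\beta$ with the horizontal arc is $2m+2n+2mn-2$, so incrementing $m$ by one adds $2n+2$ new intersections with $\alpha_2^a$ (and incrementing $n$ adds $2m+2$), not the fixed two that ``a pair of arcs each crossing $\alpha_2^a$ once'' would produce. This is forced by the geometry: an extra longitudinal wind of the pattern must travel past all the strands already present, so the new portion of $\beta$ acquires intersections proportional to the other parameter. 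The paper's construction reflects this---for instance, passing from $H_{m,1}$ to $H_{m,2}$ requires converting an existing strand into a rainbow and adding $2m$ new vertical strands, a modification that is not supported in a small disk near the clasp.

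Second, and more seriously, the sentence ``the verification reduces to a planar isotopy in the recovered picture'' is the entire content of the proposition, and you give no argument for it. The knot recovered from $H_{m,n}$ by the $w$-to-$z$ procedure has many spurious self-crossings, and showing that well-chosen isotopies in $S^1\times D^2$ reduce it to the clasped form of Definition \ref{def gmp} is exactly the step the paper identifies as ``not immediately obvious'' and deliberately circumvents. The paper's actual proof replaces this isotopy argument with an invariant computation: it reads off the parameters $(r,s)=(m+1,-(2mn+n-m-2))$ of the constructed diagram and applies Theorem \ref{wenzhaos thm} to identify the associated $2$-bridge link as $b(4mn+2n+2m,\,2m+1)$; separately, Proposition \ref{braid decomp} shows, by an induction on full twists in the Conway tangle $C(2n,1,2m)$, that the $2$-bridge link of $Q_{m,n}$ has the same Schubert normal form; the classification of $2$-bridge links then forces the two patterns to be isotopic. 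If you want to keep your direct approach you must either carry out the isotopies explicitly at each inductive step or, as the paper does, substitute a complete invariant of $(1,1)$-unknot patterns for the isotopy.
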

We first describe the inductive construction of the Heegaard diagram $H_{m,n}$, and then for the rest of this section, we justify how it recovers $Q_{m,n}$. The inductive procedure begins by constructing a Heegaard diagram $H_{1,1}$ for $Q_{1,1}$, which is shown in Figure \ref{fig:Q11}. Then it constructs $H_{m,1}$ and finally $H_{m,n}$. Before we describe the inductive step, however, we need to introduce some terminologies. 

Recall that a Heegaard diagram consists of left and right vertical $\alpha_1$ arcs, top and bottom $\alpha_2$ arcs, and a $\beta$ curve which crosses the $\alpha_i$ arcs, breaking it into strands inside the diagram. We label the strands as in Figure \ref{fig:Q11} and \ref{fig:construction-terminology}, based on where they are located in the diagram. Specifically, the strands that intersect both the $\alpha_2$ and $\alpha_1$ arcs are
\begin{itemize}
    \item $T/L$, which intersects the top $\alpha_2$ arc and the left $\alpha_1$ arc
    \item $T/R$, which intersects the top $\alpha_2$ arc and the right $\alpha_1$ arc
    \item $B/R$, which intersects the bottom $\alpha_2$ arc and the right $\alpha_1$ arc
    \item $B/L$, which intersects the bottom $\alpha_2$ arc and the left $\alpha_1$ arc, enclosing a region that doesn't contain the $w$ basepoint
    \item $B/L/W$, which intersects the bottom $\alpha_2$ arc and the left $\alpha_1$ arc, enclosing a region that contains the $w$ basepoint
    
\end{itemize}
There are also strands which only intersect the $\alpha_2$ arcs:
\begin{itemize}
    \item $B/M$, which intersects the bottom $\alpha_2$ arc twice in a rainbow shape. The enclosed region between this strand and the bottom $\alpha_2$ arc contains the $w$ basepoint
    \item $V/L$ and $V/R$, which are vertical strands that intersect both the top and bottom of the $\alpha_2$ arcs. These are placed in between $T/L$ and $T/R$, as well as between $B/L$ and $B/R$. The final label $L$ or $R$ determine whether they are to the left or right of the rainbow $B/M$
\end{itemize}
In this terminology, the Heegaard diagram $H_{1,1}$ consists two strands of type $T/L$, two strands of type $T/R$, one strand of type $B/L/W$, one strand of type $B/M$, and one strand of type $B/R$.

We now describe the algorithm to construct $H_{m,n}$ for any $m$ and $n$, which consists of three steps. In each step, we add strands that are a part of the $\beta$ curve. 

\underline{\textit{Step 1:}} Construct $H_{m,1}$ from $H_{1,1}$ by adding:
\begin{itemize} 
    \item $2(m-1)$ strands of type $T/L$
    \item $m-1$ strands of type $T/R$
    \item $m-1$ strands of type $B/R$
    \item $m-1$ strands of type $V/R$
    \item $m-1$ strands of type $B/M$
\end{itemize}
Using the above procedure, for example, we may obtain the Heegaard diagram $H_{2,1}$ in Figure \ref{fig:mazur_diagram} for the Mazur pattern from $H_{1,1}$, shown in Figure \ref{fig:Q11}. Figure \ref{fig:Qm1} provides a schematic for the Heegaard diagram for $H_{m,1}$, where $m$ is any positive integer.

\underline{\textit{Step 2:}} Construct $H_{m,2}$ from $H_{m,1}$ by first taking the strand of type $B/L/W$ (there should be only one at this stage) and push it down through $\rho_3$ to form a rainbow $B/M$. There should now be $m+1$ strands of type $B/M$ in the diagram. Then, we add:
\begin{itemize}
    \item 1 strand of type $T/L$
    \item 1 strand of type $V/R$
    \item $2m$ strands of type $V/L$
\end{itemize}
A schematic for this procedure is shown in Figure \ref{fig:Qmn}, which uses $m = 1$ for simplicity, and omits all parts of the diagram except for the one $B/M$ rainbow present in $H_{1,1}$.

\underline{\textit{Step 3:}} Construct $H_{m,n}$ from $H_{m,2}$ by adding:
\begin{itemize}
    \item $(2m+1)(n-2)$ strands of type $V/L$
    \item $n-2$ strands of type $V/R$
\end{itemize}
Figure \ref{fig:Qmn_diagram} provides a schematic for the general Heegaard diagram $H_{m,n}$ where $n\ge 2$.

To verify that this construction indeed recovers the generalized Mazur pattern, one must show that the pattern $Q_{m,n}'$ obtained from the Heegaard diagram $H_{m,n}$ is isotopic to $Q_{m,n}$. This is not immediately obvious, but well-chosen isotopies will reduce the number of self intersections until the same ones as described in Definition \ref{def gmp} remain. This results in $2m+1$ intersection points down the vertical alpha curve and $2m+2n+2nm-2$ intersection points along the horizontal alpha curve. To circumvent the technical details of isotoping Heegaard diagrams, we instead rely on two-bridge link invariants which we can extract from these Heegaard diagrams. This will be explored in the rest of this section.

\begin{figure}[!htb]
    \centering
    \includegraphics[scale =0.6]{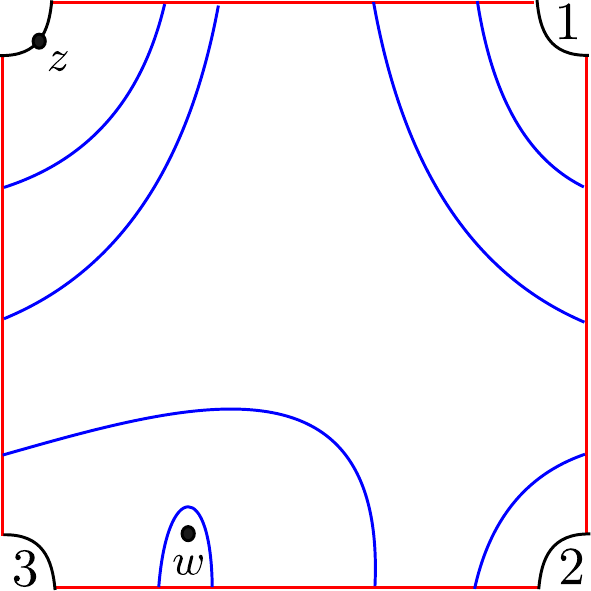}
    \caption{The Heegaard diagram $H_{1,1}$, consisting of 2 strands of type $T/L$, 2 strands of type $T/R$, 1 strand of type $B/L/W$, 1 strand of type $B/M$, and 1 strand of type $B/R$.}
    \label{fig:Q11}
\end{figure}

\begin{figure}[!htb]
    \centering
    \includegraphics[scale=0.6]{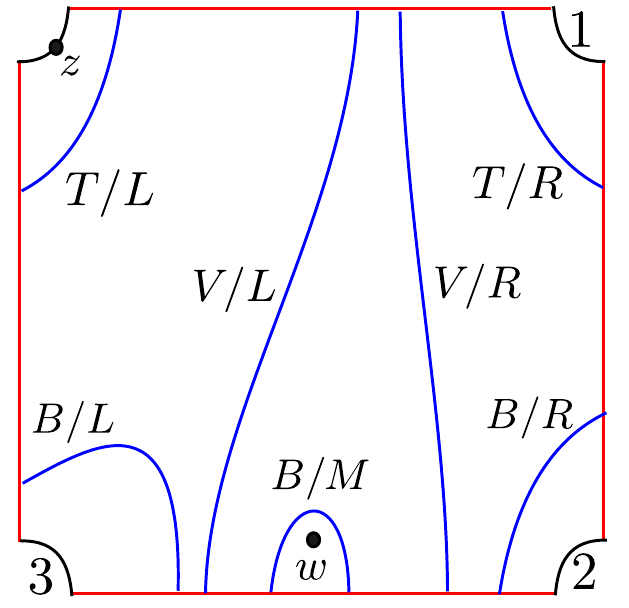}
    \caption{Terminology for describing the Heegaard diagram $H_{m,n}$ when $n\ge 2$. The labels refer to the strands, not the regions.
    }
    \label{fig:construction-terminology}
\end{figure}

\begin{figure}[!htb]
    \centering
    \includegraphics[scale = 0.45]{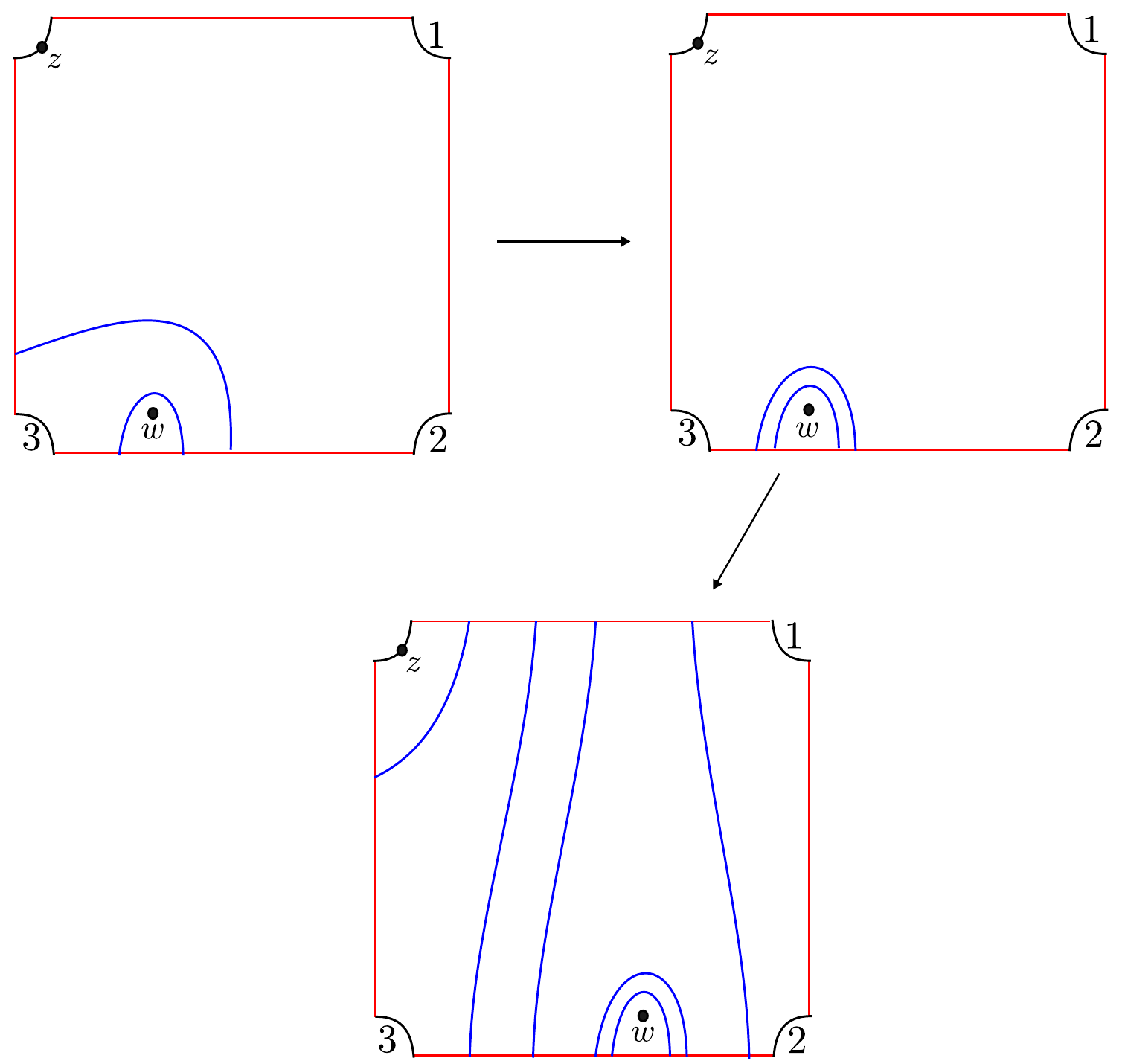}
    \caption{A schematic for Step 2. The first arrow represents pushing the strand of type $B/L$ down through $\rho_3$ to form a rainbow $B/M$. The second arrow represents adding the rest of the strands, namely, 1 strand of type $T/L$, 1 strand of type $V/R$, and $2m$ strands of type $V/L$ ($m=1$ in this case for simplicity).}
    \label{fig:Qmn}
\end{figure}

\begin{figure}[!htb]
\begin{minipage}[b]{0.48\textwidth}
    \centering
    \includegraphics[scale=0.6]{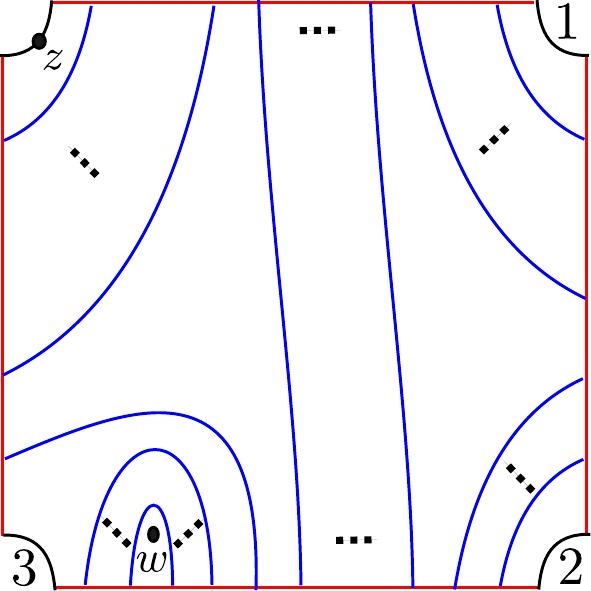}
    \caption{Schematic for the Heegaard diagram $H_{m,1}$.}
    \label{fig:Qm1}
\end{minipage}
\begin{minipage}[b]{0.5\textwidth}
    \centering
    \includegraphics[scale=0.6]{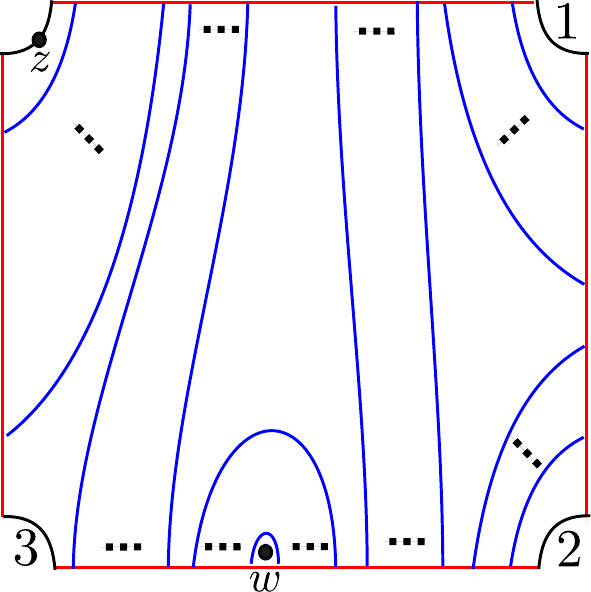}
    \caption{Schematic for $H_{m,n}$ when $n\ge 2$.}
    \label{fig:Qmn_diagram}
\end{minipage}
\end{figure}

By \cite{Che19}, there is an identification between $2$-bridge links and $(1,1)$-unknot patterns as follows: removing a regular neighborhood of one component of the $2$-bridge link leaves the other component as a pattern knot inside the solid torus.  Moreover, every $2$-bridge link admits an presentation called the \textit{Schubert normal form} \cite{Sch53}. Such a normal form is denoted by $b(p,q)$, where $(p,q)$ is a pair of coprime integers such that $p>0$ and $0<|q|<\frac{p}{2}$. The link $b(p,q)$ has two components if and only if $p$ is even, and $b(p,q)$ is isotopic to $b(p',q')$ if and only if $p = p'$ and $q' \equiv q^{\pm1}$ (mod $p$).

In order to prove that the pattern $Q_{m,n}'$ obtained from Proposition \ref{h diagram} is isotopic to $Q_{m,n}$, we verify that they are identified to isotopic $2$-bridge links. First, we determine the $2$-bridge link identified with $Q_{m,n}'$, following the procedure in \cite{Che19}. 

Let $\mathcal{H}$ be a bordered Heegaard diagram that gives rise to unknot patterns. One may associate $\mathcal{H}$ with a pair of non-negative integer $(r,s)$, where $r$ is the number of loops around $w$ and $z$, and $s$ is the number of middle strands which separates the loops the main region of the diagram, as defined in \cite{Che19}. An example of finding $r$ and $s$ for the Heegaard diagram of the Mazur pattern is shown in Figure \ref{fig:loopsstrands}. The parameter $(r,s)$ completely determines the entire diagram. The following theorem gives a general formula for the $2$-bridge link associated with a $(1,1)$-unknot pattern in terms of $(r,s)$.

\begin{figure}[!htb]
    \centering
    \includegraphics[scale=0.3]{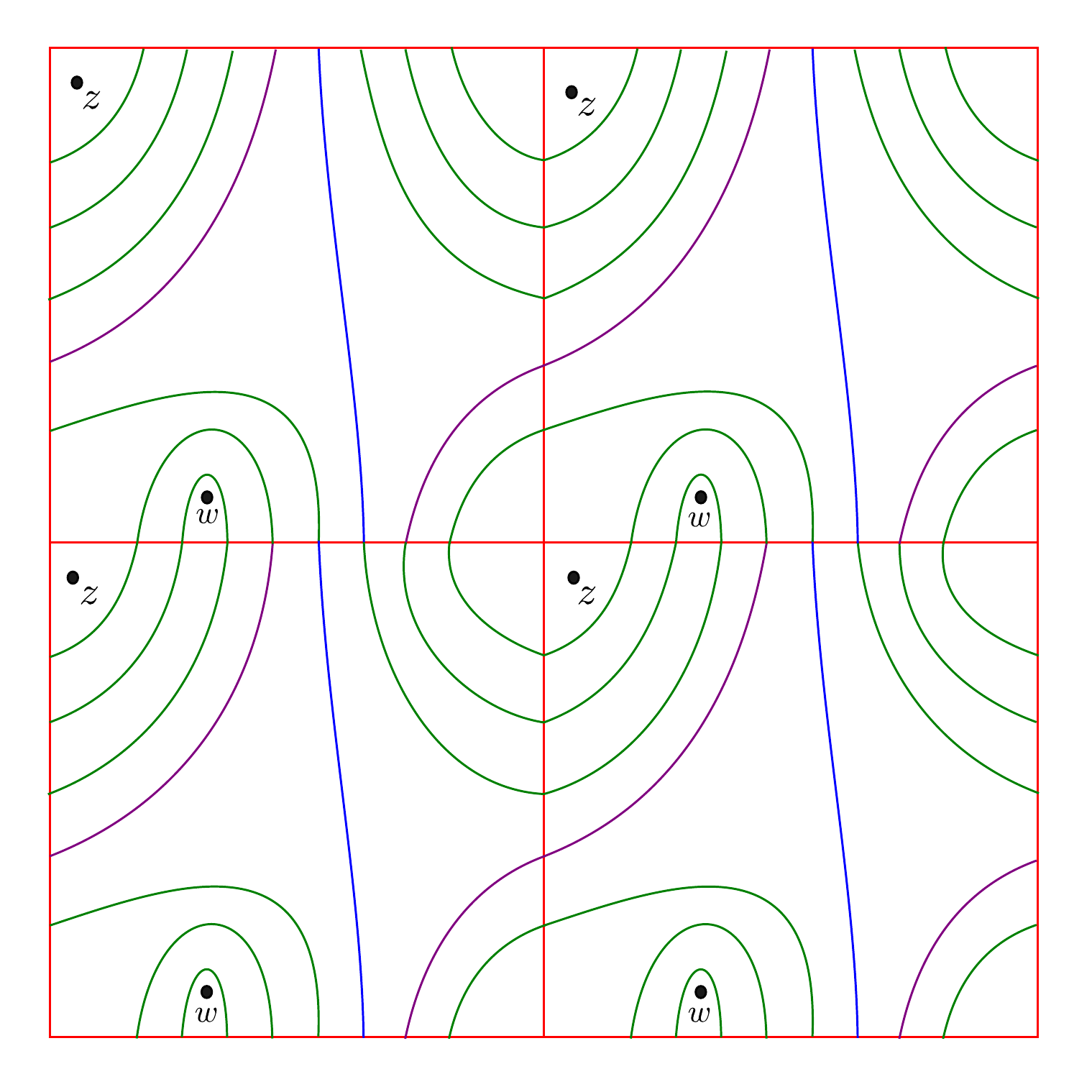}
    \caption{Determining the number of loops and strands for the Heegaard diagram of the Mazur pattern. Loops are colored in green, and strands are colored in purple, so $(r,s) = (3,1)$.}
    \label{fig:loopsstrands}
\end{figure}

\begin{theorem}\cite[Theorem 5.4]{Che19} \label{wenzhaos thm}
    Let $P$ be a $(1,1)$-unknot pattern obtained by a genus-one doubly pointed Heegaard diagram of parameter $(r,s)$. Then the link consists of $P$ and the meridian of the solid torus is the 2-bridge link $b(2|s| + 4|r|, \epsilon(r)(2|r|-1))$, where $\epsilon(r)$ is the sign of r.
\end{theorem}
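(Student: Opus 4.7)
The plan is to exhibit a 2-bridge presentation of the link $P \cup \mu$ directly from the Heegaard data and then read off the Schubert invariants. First, I would reconstruct the pattern $P$ explicitly from the parameters $(r,s)$: assuming without loss of generality $r > 0$, the $\beta$-curve forms $r$ ``loops'' each winding once around $w$ and $z$, and these are separated from the main region by $s$ vertical middle strands. Using the standard recipe for reading off a pattern from a doubly-pointed Heegaard diagram (connect $w$ to $z$ in the complement of $\beta$ and $z$ to $w$ in the complement of $\alpha$, with crossings), this gives a concrete diagram of $P$ inside the solid torus $V = S^1 \times D^2$; setting $\mu = \{\textrm{pt}\} \times \partial D^2$ as the meridian then produces an explicit diagram of the two-component link $P \cup \mu \subset S^3$.

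Next, I would verify that $P \cup \mu$ is a 2-bridge link by exhibiting a bridge sphere $S \subset S^3$ intersecting $\mu$ in 2 points (since $\mu$ is unknotted) and intersecting $P$ in 2 points (by the $(1,1)$-property of $P$). Such a sphere may be extracted from the genus-one Heegaard splitting of $S^3$ that underlies the bordered diagram. The two arcs of $P \cup \mu$ on each side of $S$ then form rational tangles. To compute the Schubert invariants I would translate these tangles into continued fractions by following the $\beta$-curve: each of the $r$ loops contributes a clasp with $4$ crossings (two along each $\alpha$-arc), giving $4|r|$ crossings total, and the $s$ middle strands contribute $2|s|$ crossings, so summing yields $p = 2|s| + 4|r|$. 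The numerator $q$ is determined by tracking how the four endpoints on $S$ are paired by the tangle arcs; the winding pattern of the $r$ loops produces a permutation whose combinatorics give the slope $2|r| - 1$, with the sign $\epsilon(r)$ inherited from the orientation convention chosen on $\beta$.

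The main obstacle is the bookkeeping in the final step: translating the combinatorial parameters $(r,s)$ into a precise rational tangle and computing its Schubert slope requires careful tracking of crossings and signs, and one must ensure the resulting $q$ is in the normal-form range $0 < |q| < p/2$. A cleaner alternative that avoids this combinatorics is to compute the double branched cover of $S^3$ over $P \cup \mu$ directly from the Heegaard data: a genus-one splitting of $S^3$ with a $(1,1)$-link lifts to a genus-one splitting of the double cover, which is a lens space $L(p',q')$ whose gluing parameters can be read off from the $\alpha \cdot \beta$ intersection pattern of the diagram. Since 2-bridge links are characterized by their lens-space double covers, identifying this cover with $L(2|s|+4|r|,\, \epsilon(r)(2|r|-1))$ pins down the Schubert form up to $q \mapsto q^{\pm 1} \pmod p$, with the remaining ambiguity resolved by a standard orientation comparison.
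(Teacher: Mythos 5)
First, a point of orientation: this statement is imported from \cite{Che19} (Theorem 5.4) and the present paper gives no proof of it, so there is no internal argument to compare yours against. The closest the paper comes is Proposition \ref{braid decomp}, where a Conway form is converted to Schubert normal form via the continued fraction $\frac{1}{2n+\frac{1}{1+\frac{1}{2m}}}=\frac{2m+1}{4mn+2n+2m}$, and that computation already exposes the main gap in your proposal.

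The problem is your derivation of $p$. The Schubert parameter $p$ in $b(p,q)$ is the determinant of the link --- the order of $H_1$ of its double branched cover --- not the crossing number of any diagram, and the continued-fraction dictionary you invoke does not reduce to summing the tangle entries: the tangle $C(2n,1,2m)$ has $2n+1+2m$ crossings but represents $b(4mn+2n+2m,\,2m+1)$. So the step ``each loop contributes $4$ crossings and each middle strand contributes $2$, hence $p=2|s|+4|r|$'' is not a valid argument, even though the formula it lands on is correct. (The quantity $2|s|+4|r|$ is most naturally the number of intersection points of $\beta$ with the $\alpha$-curve in the genus-one doubly pointed diagram; to identify that with the Schubert $p$ you need an actual argument, e.g.\ that the diagram realizes the standard $(1,1)$-position of a $2$-bridge link.) The determination of $q$ ``by tracking how the four endpoints are paired'' is likewise underspecified: $q \bmod p$ is not captured by a permutation of four points but by the full winding data of the underarcs. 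Your alternative route --- lift the genus-one splitting to the double branched cover, identify it as a lens space $L(p,q)$ from the slopes of $\alpha$ and $\beta$, and invoke the classification of $2$-bridge links by their double covers --- is the sound way to make this rigorous and is essentially the standard argument for statements of this type; but as written it is a one-sentence sketch, and extracting the gluing parameters $\bigl(2|s|+4|r|,\ \epsilon(r)(2|r|-1)\bigr)$ from $(r,s)$ is precisely the computation that still has to be done.
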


For the Heegaard diagram we constructed in Proposition \ref{h diagram}, we have $(r,s) = (m+1,-(2mn+n-m-2))$, where the minus comes from sign conventions. By Theorem \ref{wenzhaos thm}, the $2$-bridge link for $Q_{m,n}'$ is $b(4mn+2n+2m,2m+1)$.

Next, we determine the $2$-bridge link for the generalized Mazur patterns $Q_{m,n}$. In this case, it is useful to recall the identification between Schubert normal form and  Conway rational tangles (for a review on rational tangles, see \cite{Lic97}): $b(p,q)$ is equivalent to the rational tangle $C(a_1,...,a_n)$, where $a_i$ is the $i$th coefficient in the continued fraction expansion

\begin{equation} \label{fraction equation}
    \dfrac{q}{p} = \dfrac{1}{a_1 + \dfrac{1}{a_2 + \cdots \dfrac{1}{a_{n-1} + \dfrac{1}{a_n}}}}.
\end{equation}
For example, the rational tangle $C(2,1,2)$ is equivalent to $b(8,3)$, which is the Whitehead link \cite{Ord06}.
The following proposition uses this identification to determine the 2-bridge link for $Q_{m,n}$.
\begin{figure}[!htb]
    \centering
    \includegraphics[scale=0.7]{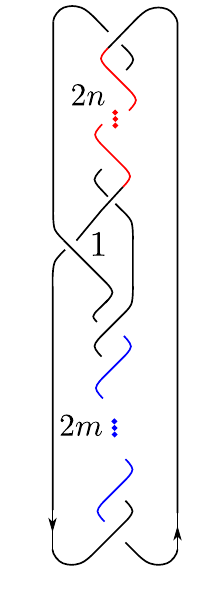}
    \caption{The Conway rational tangle $C(2m,1,2n)$, which corresponds to the $2$-bridge link associated with $Q_{m,n}$.}
    \label{fig:tanglediagram}
\end{figure}

\begin{proposition} \label{braid decomp}
    The $2$-bridge link associated with $Q_{m,n}$ has Schubert normal form $b(4mn+2n+2m,2m+1)$. 
\end{proposition}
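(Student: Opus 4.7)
The strategy is to identify the two-bridge link formed by $Q_{m,n}$ together with the meridian of the solid torus as a Conway rational tangle, and then convert the resulting continued fraction to Schubert normal form.

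First I would redraw the link $L_{m,n} := Q_{m,n} \cup \mu_V$, where $\mu_V$ is the meridian of the solid torus, as a link in $S^3$. Using the construction in Definition \ref{def gmp} — namely, winding $m$ times around the torus, turning around through the clasp, and winding $n$ times back — I would show via planar isotopies that $L_{m,n}$ admits a four-endpoint tangle presentation with exactly three twist regions: a block of $2m$ half-twists (coming from the $m$ wraps paired with their reflections after turning around), a single crossing corresponding to the clasp itself, and a block of $2n$ half-twists (coming from the $n$ wraps paired with the over/under strands that close the pattern). This identifies $L_{m,n}$ with the Conway rational tangle $C(2m, 1, 2n)$ shown in Figure \ref{fig:tanglediagram}. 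The orientation and signs of the twists need to be tracked carefully here to match the conventions in Definition \ref{def gmp}.

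Next, applying the continued-fraction formula \eqref{fraction equation} to $C(2m, 1, 2n)$, I compute
\begin{equation*}
\frac{q}{p} \;=\; \cfrac{1}{2m + \cfrac{1}{1 + \cfrac{1}{2n}}} \;=\; \frac{2n+1}{4mn+2m+2n},
\end{equation*}
so $L_{m,n}$ has Schubert normal form $b(4mn+2m+2n,\, 2n+1)$. Finally, I would invoke the classification of two-bridge links: $b(p,q) \simeq b(p,q')$ if and only if $q' \equiv q^{\pm 1} \pmod{p}$. The identity
\begin{equation*}
(2m+1)(2n+1) \;=\; 4mn + 2m + 2n + 1 \;\equiv\; 1 \pmod{4mn+2m+2n}
\end{equation*}
shows that $(2n+1)^{-1} \equiv 2m+1$ mod $p$, hence $b(4mn+2m+2n, 2n+1) \simeq b(4mn+2m+2n, 2m+1)$, as claimed.

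The main obstacle is the first step: carefully verifying that the diagram of $Q_{m,n} \cup \mu_V$, obtained from the picture in Figure \ref{fig:gen mazur pic} by adding the meridian of the solid torus, can be isotoped into the rational tangle form $C(2m, 1, 2n)$. This amounts to checking that the winding and clasp structure of the pattern gives precisely the twist counts $2m$, $1$, and $2n$ with compatible signs; once this tangle identification is in hand, the remaining steps are routine computations with continued fractions and the classification of two-bridge links.
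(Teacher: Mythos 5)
Your proposal is correct and follows essentially the same route as the paper: identify $Q_{m,n}\cup\mu_V$ with a Conway rational tangle of the form $C(\,\cdot\,,1,\,\cdot\,)$ and then convert via the continued fraction \eqref{fraction equation}. The only differences are minor. First, the paper reads the tangle as $C(2n,1,2m)$, whose continued fraction is $\tfrac{2m+1}{4mn+2n+2m}$, so it lands on $b(4mn+2n+2m,\,2m+1)$ directly; you read it as $C(2m,1,2n)$, obtain $b(4mn+2m+2n,\,2n+1)$, and then need the extra (correct) step $b(p,q)\simeq b(p,q^{-1}\bmod p)$ using $(2m+1)(2n+1)\equiv 1 \pmod{4mn+2m+2n}$ — this is exactly the congruence the paper uses later to prove Proposition \ref{qmn isotopic}, so nothing is lost. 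Second, for the step you flag as the main obstacle — verifying the tangle identification — the paper argues by induction on $m$ and $n$: the base case $C(2,1,2)$ is the Whitehead link $Q_{1,1}$, and incrementing $m$ (resp.\ $n$) adds a full twist to one twist region of the tangle, corresponding to adding one longitudinal strand to the respective set of strands in the pattern. You may find that inductive framing cleaner than a single direct planar isotopy of the whole diagram.
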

\begin{proof}
We claim that the $2$-bridge link associated with $Q_{m,n}$ is the rational tangle $C(2n,1,2m)$ shown in Figure \ref{fig:tanglediagram}. If this is true, this $2$-bridge link is exactly $b(4mn+2n+2m,2m+1)$, since \[\frac{1}{2n+\frac{1}{1+\frac{1}{2m}}} = \frac{2m+1}{4mn+2n+2m}.\]
We induct on $n$ and $m$. If $n=m=1$, the tangle $C(2,1,2)$ is identified with Whitehead link $Q_{1,1}$, as discussed above. Suppose that $C(2n,1,2m)$ is the 2-bridge link for $Q_{m,n}$. 
Note that $C(2n, 1,2m+2)$ adds a full blue twist to $C(2n,1,2m)$, which corresponds to adding one longitudinal arc in the set of $m$ strands in $Q_{m,n}$, so it is associated with the pattern $Q_{m+1,n}$. Similarly, the tangle $C(2n+2, 1,2m)$ adds a full red twist to $C(2n,1,2m)$, which corresponds to adding one longitudinal arc in the set of $n$ strands in $Q_{m,n}$, so it is associated with $Q_{m,n+1}$. This completes the induction step.
\end{proof}

\begin{remark}
    The $2$-bridge link for the Mazur pattern in \cite[Figure 4]{Lev16} has Conway form $C(4,2,-2)$, which is different than $C(4,1,2)$, the one that we exhibit in Proposition \ref{braid decomp}. However, the $2$-bridge links represented by the two Conway forms are isotopic -- this can be verified either directly or through Equation \ref{fraction equation}. For the patterns $Q_{m,n}$, a similar relation holds: the tangle $C(2m,1,2n)$ is isotopic to $C(2m,2,-1,-(2n-1))$. Specializing to the patterns $Q_{m,1}$, we obtain that $C(2m,2,-1,-1)$ is isotopic to $C(2m,2,-2)$, indicating that the bridge presentation for these patterns can be described entirely using full twists. 
\end{remark}
\begin{proof}[Proof of Proposition \ref{h diagram}]
By Theorem \ref{wenzhaos thm} and Proposition \ref{braid decomp}, we see that the $2$-bridge links for $Q_{m,n}$ and $Q_{m,n}'$ both have Schubert normal form $b(4mn+2n+2m,2m+1)$. The patterns $Q_{m,n}$ and $Q_{m,n}'$ are therefore isotopic.
\end{proof}
As an immediate consequence of this discussion, we also obtain Proposition \ref{qmn isotopic}, which states that $Q_{m,n}$ is isotopic to $Q_{n,m}$.
\begin{proof}[Proof of Proposition \ref{qmn isotopic}]
    By Proposition \ref{braid decomp}, the 2-bridge links for $Q_{m,n}$ and $Q_{n,m}$ are $b(4mn+2n+2m,2m+1)$ and $b(4mn+2n+2m,2n+1)$, respectively. Since $(2m+1)(2n+1) \equiv 1$ (mod $4mn+2n+2m$), the two links are isotopic, and thus $Q_{m,n}$ is isotopic to $Q_{n,m}$ as desired.
\end{proof}

\begin{remark}
    In \cite{Hom14a}, Hom found bordered Heegaard diagrams for the $(p,1)$-cable patterns in the solid torus. These diagrams have parameters $(r,s) = (1,-(p-2))$, which implies that $(p,1)$-cables are exactly $Q_{0,p} = Q_{p,0}$ in our notation.
\end{remark}

\begin{remark}
    In this section, the generalized Mazur patterns $Q_{m,n}$ are not oriented. In the following sections, we orient them using the Heegaard diagrams in Proposition \ref{h diagram}, giving them a winding number of $-(m-n)$. With these orientations, Proposition \ref{qmn isotopic} states that $Q_{m,n}$ is isotopic to $rQ_{n,m}$, the reverse of $Q_{n,m}$. However, since neither $\tau$ nor $\epsilon$ depend on orientations,  we can assume without loss of generality that $m\ge n$ when computing $\tau(Q_{m,n}(K))$ and $\epsilon(Q_{m,n}(K))$.
\end{remark}

\section{Computation of $\tau(Q_{m,n}(K))$ and $\epsilon(Q_{m,n}(K))$} \label{IMMERSED COMPUTATION}
In this section, we compute the invariants $\tau(Q_{m,n}(K))$ and $\epsilon(Q_{m,n}(K))$ for generalized Mazur patterns, giving a proof of Theorem \ref{main thm}. This is the technical core of our paper. By Proposition \ref{qmn isotopic}, we may reduce our computation to $Q_{m,n}$ with $m \geq n$. 

\begin{theorem}\label{eps thm}
Let $Q_{m,n}$ be the generalized Mazur pattern embedded in the solid torus $V$. If $m\ne n$, then for any knot $K\subset S^3$, we have \begin{equation}\label{eps eq}
    \tau(Q_{m,n}(K)) = \begin{cases}
    |m-n|\tau(K) & \text{if } \tau(K) \le 0 \text{ and } \epsilon(K) \in \{0,1\}, \\
    |m-n|\tau(K) + |m-n| & \text{if } \tau(K) < 0 \text{ and } \epsilon(K)=-1, \\
    |m-n|\tau(K) + \min(m,n) & \text{if } \tau(K) > 0 \text{ and }\epsilon(K) = 1,\\
    |m-n|\tau(K) + \max(m,n) -1& \text{if } \tau(K) \ge 0 \text{ and }\epsilon(K) = -1.
    \end{cases}
    \end{equation}
In the case where $m=n$, we have 
\begin{equation}\label{eps eq 3}
    \tau(Q_{m,m}(K)) = \begin{cases}
    0 & \text{if } \tau(K) < 0, \\
    m-1 & \text{if } \tau(K) = 0, \\
    m & \text{if } \tau(K) > 0.
    \end{cases}
    \end{equation}
Also,
\begin{equation}\label{epsilon}
    \epsilon(Q_{m,n}(K)) = \begin{cases}
        0 & \text{if } \tau(K) = \epsilon(K) = 0, \\
        1 & \text{otherwise.}
    \end{cases}
\end{equation}
\end{theorem}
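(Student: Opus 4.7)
By Proposition \ref{qmn isotopic} we may assume $m \ge n$, so that $w(Q_{m,n}) = m-n$. The plan is to apply the immersed-curve pairing framework from Section \ref{intro immersed}: lift the pairing diagram $\mathcal{H}(\alpha_K)$ for $Q_{m,n}(K)$ to the universal cover, then use the A-buoy isotopy algorithm of \cite{Che19,CH23} to reduce the complex $CFK_\mathcal{R}(\mathcal{H}(\alpha_K))$ until a single intersection point survives, whose Alexander grading is $\tau(Q_{m,n}(K))$. For $\epsilon$, identify the smallest subcomplex containing that surviving cycle and read off the vertical position of a distinguished horizontally simplified basis element, exactly as in Example \ref{Q21+T23}.

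\textbf{Step 1: build the pairing diagram.} Place the $\beta$-curve $\beta_{Q_{m,n}}$ obtained from the Heegaard diagram $H_{m,n}$ of Proposition \ref{h diagram} in the third quadrant of the torus, extending horizontally and vertically into the second and fourth quadrants, and place the immersed train track $\alpha_K$ in the first quadrant. Because only the neighborhood of the unstable segment of $\alpha_K$ interacts nontrivially with $\beta_{Q_{m,n}}$ (the rest produces cancelling pairs), we may replace $\alpha_K$ with the local model in Figure \ref{fig:immersedcfdall}, whose shape is determined by the five cases of Lemma \ref{unstable segment shape}. Locate the unique intersection point $x_\ast$ fixed by the hyperelliptic involution; by symmetry $A(x_\ast) = 0$, which serves as the anchor for all absolute Alexander gradings via Lemma \ref{relative filtration}.

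\textbf{Step 2: case analysis for $\tau$.} Split on the sign/vanishing of $\tau(K)$ and the value of $\epsilon(K)$, giving the five sub-cases in \eqref{eps eq} plus the three $m=n$ sub-cases in \eqref{eps eq 3}. In each case, successively cancel pairs of intersection points $x,y$ joined by a bigon containing $z$-basepoints but no $w$-basepoints, at each step choosing a pair with minimal filtration difference and placing A-buoys on $\beta$ to record the change of intersection number with $\delta_{w,z}$. One expects the surviving intersection point to lie on a predictable portion of the pairing diagram: for $\tau(K) > 0$ and $\epsilon(K) = 1$ it should be the rightmost clasp-crossing of the $m$-strand bundle (contributing $m$ to the Alexander grading beyond the $(m-n)\tau(K)$ shift from the winding), while for $\epsilon(K) = -1$ it should be forced further up by the opposite turn of the unstable segment (contributing $m-1$, corresponding to $\max(m,n)-1$). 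The special feature of the $m=n$ case is that the winding is zero, so the vertical translation by $(m-n)\tau(K)$ vanishes and the surviving point is pinned near $x_\ast$ at heights $0$, $m-1$, or $m$. In each case, compute $A$ of the surviving point by summing the signed intersections of the reduced $\beta$-arc from $x_\ast$ with $\delta_{w,z}$ plus the A-buoy contributions.

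\textbf{Step 3: computation of $\epsilon$.} After running the $\tau$-algorithm, identify a reduced subcomplex $\mathcal{C} \subset CFK_\mathcal{R}(\mathcal{H}(\alpha_K))$ that still contains the surviving cycle, using the fact that the $\beta$-curve near a clasp crossing produces a predictable pair of $U$- and $V$-weighted arrows. The generator of $H_\ast(\mathcal{C}^{\horz})$ is the distinguished element $\eta_0$; the invariant $\epsilon(Q_{m,n}(K))$ is then determined by whether $\eta_0$ has an incoming vertical arrow ($\epsilon = 1$), an outgoing vertical arrow ($\epsilon = -1$), or neither ($\epsilon = 0$). The expected outcome is that whenever the surviving point lies in the interior of the clasp region, the clasp's local model of $CFK_\mathcal{R}$ always produces an incoming $V^k$-arrow, forcing $\epsilon = 1$; the sole exception is when $\tau(K) = \epsilon(K) = 0$, where $\alpha_K$ is purely horizontal and the pairing reduces to the unknot complement, yielding $\epsilon = 0$.

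\textbf{Main obstacle.} The chief difficulty is the combinatorial bookkeeping: for generic $m,n$, the pairing diagram contains $O(mn)$ intersection points, and the isotopy sequence must be performed in a globally consistent order of filtration differences while tracking how A-buoys migrate across the $\beta$-curve. The eight cases (five from \eqref{eps eq} plus three from \eqref{eps eq 3}) each require a distinct reduction sequence, and the asymmetry between the $m$ over-strands and the $n$ under-strands at the clasp (which is what produces the asymmetric $\min(m,n)$ vs.\ $\max(m,n)-1$ terms) must be handled carefully. I expect that setting up a clean inductive template on $m$ and $n$, using the three-step construction of $H_{m,n}$ from Proposition \ref{h diagram} and the model neighborhoods of Figure \ref{fig:immersedcfdall}, will make each individual case a finite and verifiable calculation once the $(m,n) = (2,1)$ base case of Example \ref{Q21+T23} is in hand.
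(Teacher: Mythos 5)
Your plan follows the same route as the paper's proof: reduce to $m\ge n$ via Proposition \ref{qmn isotopic}, pair the lift $\tilde{\beta}$ of the $Q_{m,n}$ curve against the unstable neighborhood of $\tilde{\alpha}$ from Lemma \ref{unstable segment shape}, run the A-buoy cancellation of Section \ref{isotopy explained} until one generator survives, anchor absolute gradings at the hyperelliptic-involution fixed point via Lemma \ref{relative filtration}, and read $\epsilon$ off a small subcomplex of $CFK_{\mathcal{R}}(\mathcal{H}(\alpha_K))$. The strategy is sound. But as written this is a plan rather than a proof: every step that actually produces the formulas --- identifying which intersection point survives in each case, exhibiting the subcomplex and checking that the displayed differentials are all of them, and computing the resulting gradings --- is deferred to ``a finite and verifiable calculation.'' That calculation is the entire content of the theorem.

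Moreover, the one quantitative prediction you do make is wrong. For $\tau(K)>0$ and $\epsilon(K)=1$ you assert the surviving generator contributes $m$ ``beyond the $(m-n)\tau(K)$ shift from the winding,'' i.e.\ $\tau(Q_{m,n}(K))=(m-n)\tau(K)+m$; the correct value is $(m-n)\tau(K)+n=(m-n)\tau(K)+\min(m,n)$. The discrepancy is exactly the bookkeeping your plan glosses over: the surviving generator $x_1$ lies in the row obtained by shifting $\tau(K)-1$ (not $\tau(K)$) rows from the fixed point $c$, so $A(x_1)=-(m-n)(\tau(K)-1)+l_{c,x_1'}\cdot\delta_{w,z}$ with $l_{c,x_1'}\cdot\delta_{w,z}=-m$, giving $-A(x_1)=(m-n)\tau(K)-(m-n)+m=(m-n)\tau(K)+n$. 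This off-by-one-row accounting, together with the different placement of the surviving generator when the unstable segment turns the other way (the $\epsilon(K)=-1$ cases, where one instead lands on $m-1=\max(m,n)-1$, or on $x_3$ when $\tau(K)=0$ and $m=n$), is precisely the source of the asymmetry between $\min(m,n)$ and $\max(m,n)-1$, so it cannot be left to ``one expects.'' Likewise, for $\epsilon$ you must actually produce the chains of $U^m$-, $U^n$-, and $V$-labelled arrows among the $x_i$ and $y_i$, verify that $\sum_i x_{2i+1}$ (resp.\ $x_{2n+1}$ or $x_{2n-1}$) is the distinguished element of a horizontally simplified basis, and observe its incoming vertical arrow; the claim that ``the clasp's local model always produces an incoming $V^k$-arrow'' is the statement to be proved, not a proof of it.
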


\begin{figure}[!htb]
    \centering
    \includegraphics[scale=0.8]{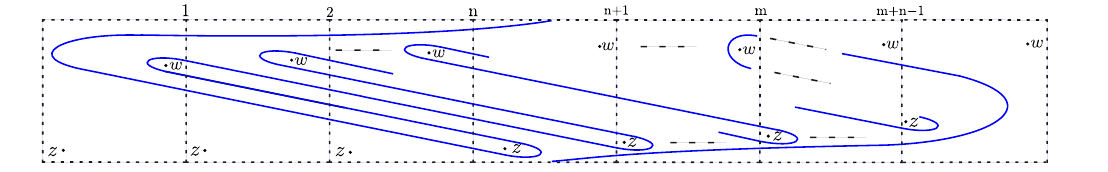}
    \caption{A lift of $\beta$ curve to the universal cover.}
    \label{fig:genbeta}
\end{figure}

\begin{figure}[!htb]
    \centering
    \includegraphics[scale=0.8]{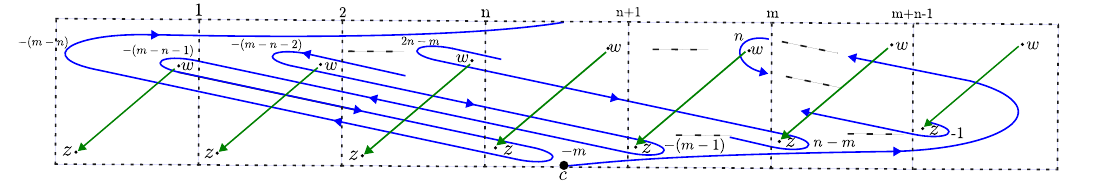}
    \caption{The Alexander grading at various points of $\tilde{\beta}$ curve in the row containing $c$.}
    \label{fig: tau by direction}
\end{figure}

We follow the strategy outlined in Section \ref{intro immersed}, recovering the $\tau$ and $\epsilon$ invariants of $Q_{m,n}(K)$ from its pairing diagram. Using the notation from Section \ref{pairing diagrams}, we denote the lifts of $\alpha_K$ and $\beta(Q_{m,n})$ in the universal cover of the doubly marked torus as $\tilde{\alpha}$ and $\tilde{\beta}$, respectively.

\begin{proof}[Proof of Theorem \ref{eps thm}]
When $\epsilon(K)$ = 0, the knot $K$ is $\epsilon$-equivalent to the unknot $U$ \cite{Hom14b}. It follows that $\tau(Q_{m,n}(K)) = \tau(Q_{m,n}(U))$ and $\epsilon(Q_{m,n}(K)) = \epsilon(Q_{m,n}(U))$, so we have $\tau(Q_{m,n}(K))= 0$ and $\epsilon(Q_{m,n}(K))= 0$, as desired. For the rest of the proof, we assume that $\epsilon(K)\ne 0$ and $m\ge n$.

We begin by a discussion of how to construct the pairing diagram of the satellite $Q_{m,n}(K)$, which consists the two lifts $\tilde{\beta}$ and $\tilde{\alpha}$, where the latter is pulled tight in the pegboard diagram. Recall that Proposition \ref{h diagram} gives us a way to construct the Heegaard diagram $\mathcal{H}$ of $Q_{m,n}$. We obtain 
$\tilde{\beta}$ by lifting the $\beta$-curve in $\mathcal{H}$ into the universal cover, as shown in Figure \ref{fig:genbeta}. The lift spans $m+n$ columns in the universal cover, which we label by $1$ to $m+n$ from left to right. For the lift $\tilde{\alpha}$ of $\alpha_K$, Lemma \ref{unstable segment shape} describes the unstable segment of $\tilde{\alpha}$ and Figure \ref{fig:immersedcfdall} shows its relevant neighborhood. Note that each column contains a translated copy of $\tilde{\alpha}$, but we only draw the copy contained in columns $n$ to $n+1$, because this is the only relevant one containing the distinguished generator in $\widehat{HF}(S^3)$. For example, the pairing diagram of $Q_{m,n}(K)$ in the case when $\tau(K)>0$ and $\epsilon(K)=1$ is shown in Figure \ref{fig:pairing tau>0 eps=1}.

From the pairing diagram, we may determine the absolute Alexander grading of the intersection points by Lemma \ref{relative filtration} and the discussion thereafter. In particular, note that if $x'$ can be obtained from $x$ by shifting the picture in the universal cover down a row, then $A(x)-A(x')=w(Q_{m,n})=-(m-n)$. Let $c$ be the fixed point under hyperelliptic involution, with $A(c)=0$. For ease of computation, we determine the grading of $x$ by first shifting the picture vertically so that the image $x'$ lies in the row containing $c$, and then determining the grading of $x'$ using Lemma \ref{relative filtration}. In Figure \ref{fig: tau by direction}, we display the grading of different points along $\tilde{\beta}$ in the row containing $c$.

We are now ready to recover $\tau(Q_{m,n}(K))$ and $\epsilon(Q_{m,n}(K))$. The lift $\tilde{\alpha}$ has the form described in Lemma \ref{unstable segment shape} and Figure \ref{fig:immersedcfdall}, which depends on $\tau(K)$ and $\epsilon(K)$, so it remains to inspect the following four cases.

\begin{figure}[!htb]
    \centering
    \includegraphics[scale = 0.8]{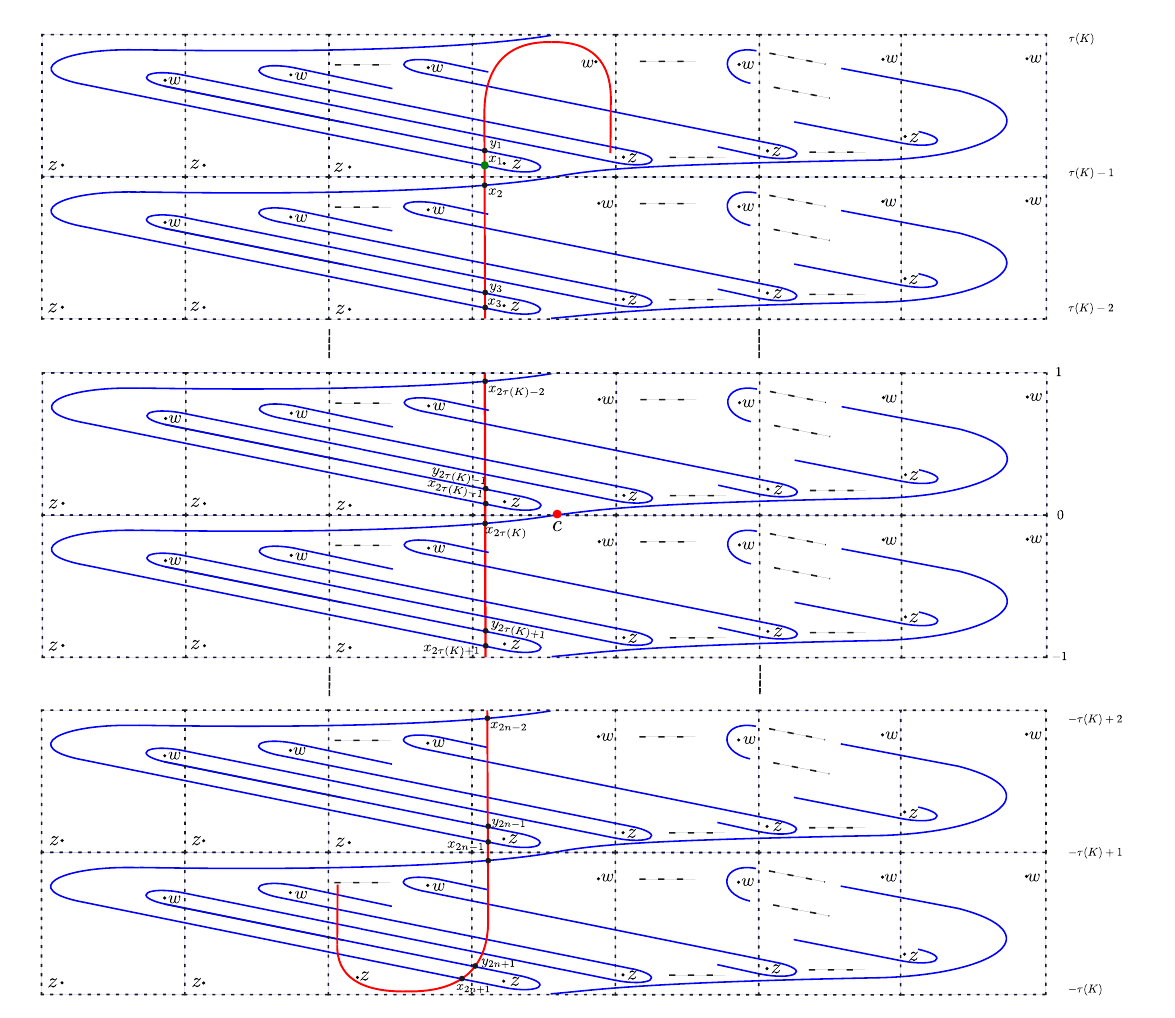}
    \caption{The pairing diagram for $Q_{m,n}(K)$ when $\tau(K) > 0, \epsilon(K) = 1$.}
    \label{fig:pairing tau>0 eps=1}
\end{figure}

\begin{figure}[!htb]
\[\begin{tikzcd}[sep = small]
	&&&&& {y_{2n+1}} \\
	&&&& {y_{2n-1}} & {x_{2n+1}} \\
	&&&& {x_{2n-1}} && {x_{2n}} \\
	&&& \textcolor{white}{\bullet} && {x_{2n-2}} \\
	& {y_3} && \dots \\
	{y_1} & {x_3} && \textcolor{white}{\bullet} \\
	{x_1} && {x_2}
	\arrow["{U^m}", from=7-1, to=7-3]
	\arrow["V"', from=6-1, to=7-1]
	\arrow["{U^n}", from=6-2, to=7-3]
	\arrow["V"', from=5-2, to=6-2]
	\arrow["{U^m}", from=6-2, to=6-4]
	\arrow["{U^m}", from=4-4, to=4-6]
	\arrow["{U^n}", from=3-5, to=4-6]
	\arrow["V"', from=2-5, to=3-5]
	\arrow["{U^m}", from=3-5, to=3-7]
	\arrow["{U^n}", from=2-6, to=3-7]
	\arrow["V"', from=1-6, to=2-6]
\end{tikzcd}\]
\caption{The relevant differentials in the $CFK_\mathcal{R}$ complex of $Q_{m,n}(K)$ when $\tau(K) > 0$, $\epsilon(K) = 1$.}
\label{fig:taug0eps1diff}
\end{figure}

\begin{figure}[!htb]
    \centering
    \includegraphics[scale=0.9]{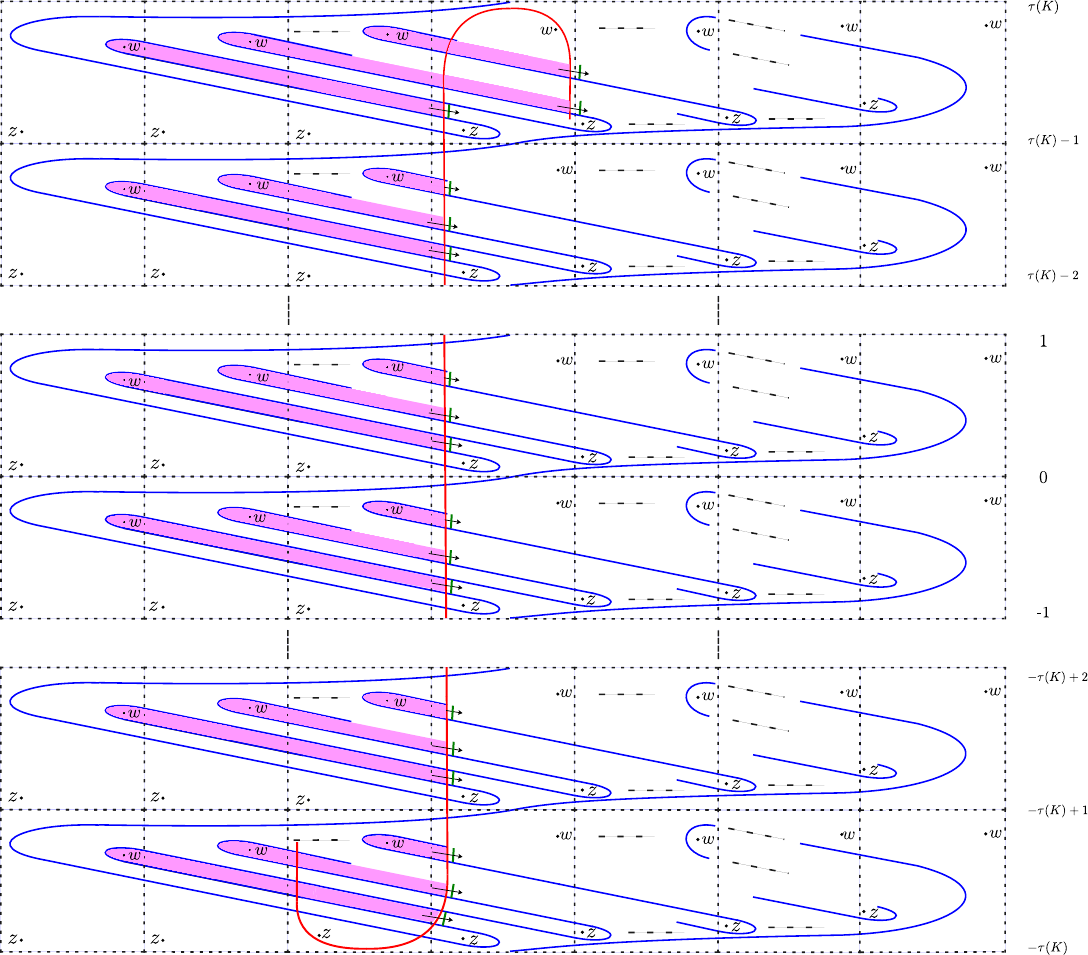}
    \caption{The disks highlighted in pink represent all the Whitney disks  of filtration difference $1$. Eliminating them by an isotopy, we get Figure \ref{fig:zbasepointisotopy2}.}
    \label{fig:zbasepointisotopy1}
\end{figure}

\underline{\textit{Case 1: $\tau(K) > 0$ and $\epsilon(K) = 1$}}. The pairing diagram of $Q_{m,n}(K)$ is shown in Figure \ref{fig:pairing tau>0 eps=1}, where the fixed point $c$ is marked in red. Following the strategy in Section \ref{isotopy explained}, we first eliminate all the intersection points of filtration difference 1. The Whitney disks connecting those points are highlighted in pink in Figure \ref{fig:zbasepointisotopy1}, and the  diagram after they are eliminated is shown in Figure \ref{fig:zbasepointisotopy2}. If $m>n$, we eliminate the blue disks (with filtration difference $n-1$), then the brown disks (with difference $n$), and finally the yellow disks (with difference $m-1$). If $m=n$, we eliminate the blue and yellow disks first, then the brown disks. Either way, $x_1$ is the remaining intersection point after the isotopies. 

\begin{figure}[!htb]
    \centering
    \includegraphics[scale=0.9]{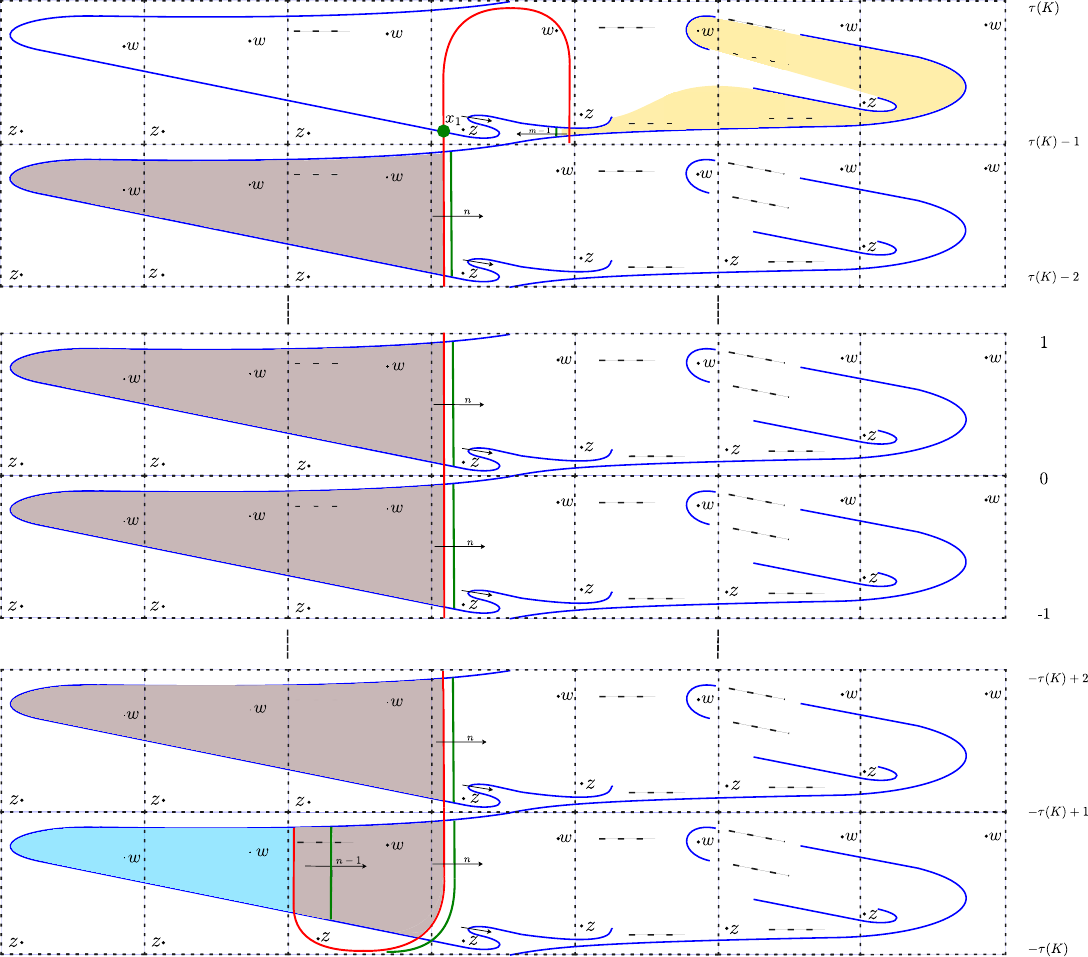}
    \caption{The result of eliminating Whitney disks in Figure \ref{fig:zbasepointisotopy1}. If $m>n$, we first eliminate the blue disks, then the brown disks, and finally the yellow disks. If $m=n$, we first eliminate the blue and yellow disks, then the brown disks. In both cases, we arrive at a complex with $x_1$ as the only remaining intersection point. The small arrows with label $n$ or $n-1$ represent respectively $n$ or $n-1$ small arrows placed in parallel; these are the A-buoys keeping track of the filtration difference.}
    \label{fig:zbasepointisotopy2}
\end{figure}

To recover the Alexander grading of $x_1$, we shift the diagram down $\tau(K)-1$ rows to get $x_1'$, and find that the algebraic intersection $l_{c,x_1'}\cdot \delta_{w,z}= -m$. It follows that
\[\tau(Q_{m,n}(K))= -A(x_1) = -(-(m-n)(\tau(K)-1) + l_{c,x_1'} \cdot \delta_{w,z}) = (m-n)\tau(K) + n.\]
To compute $\epsilon(Q_{m,n}(K))$, we need to construct the subcomplex of the $CFK_\mathcal{R}$ complex containing $x_1$. The relevant differentials of this subcomplex are drawn in Figure 
\ref{fig:taug0eps1diff}. Note that the cycle $\sum^{2n+1}_{i=1} x_{2i+1}$ survives in the homology $\widehat{HF}(S^3)$, so this cycle must be the distinguished element of some horizontally simplified basis. It remains to look at its position in the vertical complex to recover $\epsilon(Q_{m,n}(K))$. There is a vertical arrow from $y_i$ to $x_i$ for each odd $i$, and together they give us a vertical arrow from $\sum^{n}_{i=0}y_{2i+1}$ to $\sum^{n}_{i=0}x_{2i+1}$. Thus, the cycle $\sum^{2n+1}_{i=0} x_{2i+1}$ is a boundary with respect to the vertical differential, which means that $\epsilon(Q_{m,n}(K)) = 1$. 

\underline{\textit{Case 2: $\tau(K) \geq 0$ and $\epsilon(K) = -1$}}. The pairing diagram is shown in Figure \ref{fig:pairing tau geq 0 eps=-1}. As before, we perform isotopies that eliminate intersection points of the same filtration difference. If $\tau(K)=0$ and $m=n$, then the intersection point that survives the spectral sequence is $x_3$; otherwise, it is $x_1$. To compute the Alexander grading of $x_3$, we shift the diagram down by $\tau(K)-1$ rows, and find that the image $x_3'$ has grading $-m$. Thus, when $\tau(K)=0$ and $m=n$, we have
\[\tau(Q_{m,n}(K))= -A(x_3) = -(-(m-n)(\tau(K)-1) - m) = m.\] Otherwise, we calculate the grading of $x_1$ using the same method, and obtain that 
\[\tau(Q_{m,n}(K))= -A(x_1) = -(-(m-n)\tau(K) - (m-1)) = (m-n)\tau(K) + (m-1).\]
We construct the subcomplex of $CFK_\mathcal{R}$ containing $x_1$ and $x_3$, and the relevant part is shown in Figure \ref{fig:taug0eps-1diff}. The analysis for $\epsilon(Q_{m,n}(K))$ the same as the case when $\tau(K)>0$ and $\epsilon(K)=1$, displayed in Figure \ref{fig:taug0eps1diff}. Hence, we have $\epsilon(Q_{m,n}(K)) = 1$. 

\begin{figure}[!htb]
    \centering
    \includegraphics[scale=0.9]{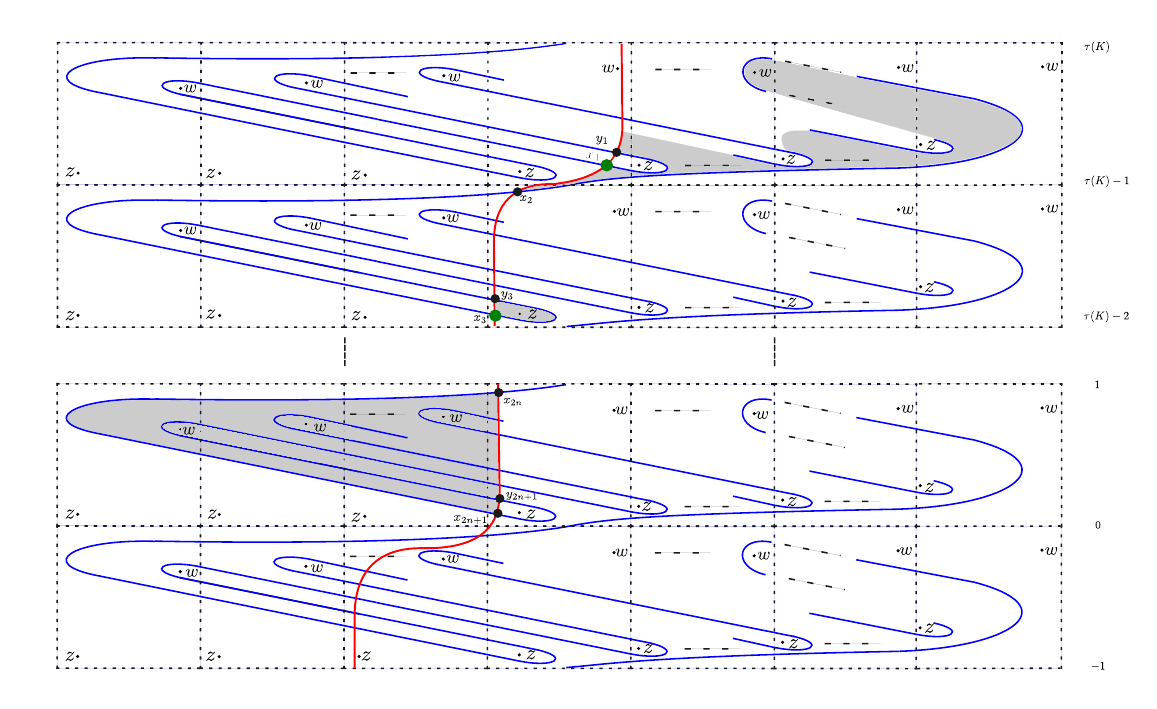}
    \caption{The pairing diagram for $Q_{m,n}(K)$ when $\tau(K) \geq 0, \epsilon(K) = -1$. Some bigons contributing to the differentials in the subcomplex are shaded. From top to bottom, they correspond to the differentials $x_1 \xrightarrow{U^m} x_{2}$, $y_3 \xrightarrow{V} x_3$, and $x_{2n+1} \xrightarrow{U^n} x_{2n}$, respectively.}
    \label{fig:pairing tau geq 0 eps=-1}
\end{figure}

\begin{figure}[!htb]
\[\begin{tikzcd}[sep = small]
	&&&&& {y_{2n+1}} \\
	&&&& {y_{2n-1}} & {x_{2n+1}} \\
	&&&& {x_{2n-1}} && {x_{2n}} \\
	&&& \textcolor{white}{\bullet} && {x_{2n-2}} \\
	& {y_3} && \dots \\
	{y_1} & {x_3} && \textcolor{white}{\bullet} \\
	{x_1} && {x_2}
	\arrow["{U^{m-1}}", from=7-1, to=7-3]
	\arrow["V"', from=6-1, to=7-1]
	\arrow["{U^n}", from=6-2, to=7-3]
	\arrow["V"', from=5-2, to=6-2]
	\arrow["{U^m}", from=6-2, to=6-4]
	\arrow["{U^m}", from=4-4, to=4-6]
	\arrow["{U^n}", from=3-5, to=4-6]
	\arrow["V"', from=2-5, to=3-5]
	\arrow["{U^m}", from=3-5, to=3-7]
	\arrow["{U^n}", from=2-6, to=3-7]
	\arrow["V"', from=1-6, to=2-6]
\end{tikzcd}\]
\caption{The relevant differentials in the $CFK_\mathcal{R}$ complex of $Q_{m,n}(K)$ when $\tau(K) \geq 0$, $\epsilon(K) = -1$.}
\label{fig:taug0eps-1diff}
\end{figure}

\underline{\textit{Case 3: $\tau(K) \leq 0$ and $\epsilon(K) = 1$}}. The pairing diagram is shown in Figure \ref{fig:pairing tau leq 0 eps=1}. After eliminating all pairs of intersection points with the same filtration difference, we are left with $x_{2n+1}$, which has Alexander grading $-(m-n)\tau(K)$. Therefore, we have 
\[\tau(Q_{m,n}(K)) = -A(x_{2n+1})= (m-n)\tau(K).\]
Figure \ref{fig:taul0eps1diff} shows the relevant differentials 
in the $CFK_\mathcal{R}$ complex containing $x_{2n+1}$, and one observes that $x_{2n+1}$ is the distinguished element in some horizontal simplified basis. Since there's a vertical differential pointing from $y_1$ to $U^{n}x_{2n+1}$, we have $\epsilon(Q_{m,n}(K)) = 1$.

\begin{figure}[!htb]
    \centering
    \includegraphics[scale = 0.75]{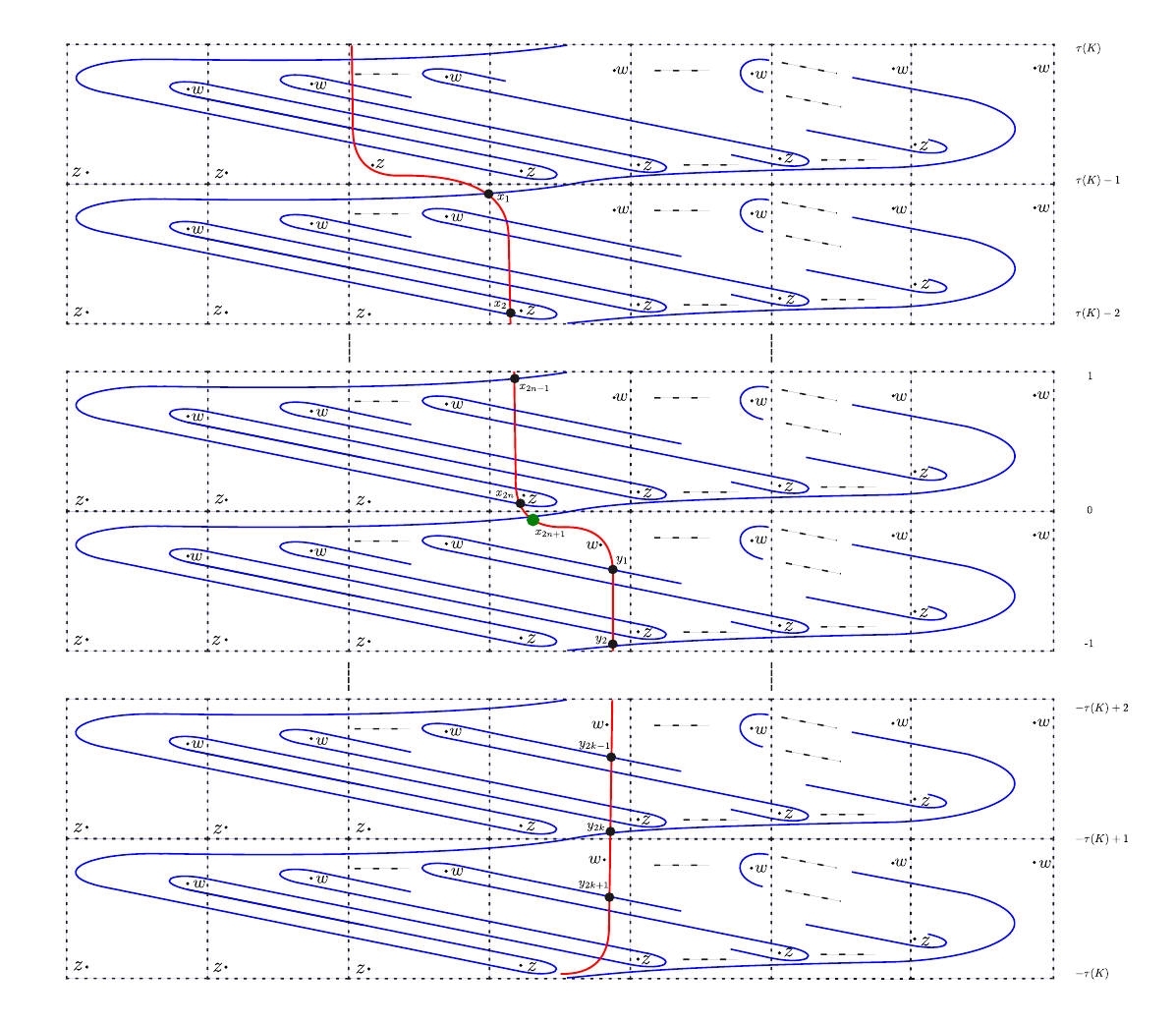}
    \caption{The pairing diagram for $Q_{m,n}(K)$ when $\tau(K) \leq 0, \epsilon(K) = 1$.}
    \label{fig:pairing tau leq 0 eps=1}
\end{figure}

\begin{figure}[!htb]
\[\begin{tikzcd}
	&&&&&&&& {x_2} && {x_1} \\
	&&&&&&&& {x_4} && {x_3} \\
	{y_{2k+1}} && {y_{2k-1}} && \cdots && {y_1} && \vdots && \vdots \\
	{y_{2k}} && \cdots && {y_2} && {x_{2n+1}} && {x_{2n}} && {x_{2n-1}}
	\arrow["{U^n}"{pos=0.3}, from=1-9, to=1-11]
	\arrow["{U^m}", from=1-9, to=2-11]
	\arrow["{U^n}"{pos=0.3}, from=2-9, to=2-11]
	\arrow["{U^m}", from=2-9, to=3-11]
	\arrow["{U^m}", from=3-9, to=4-11]
	\arrow["{U^n}"{pos=0.3}, from=4-9, to=4-11]
	\arrow["{U^m}"', from=4-9, to=4-7]
	\arrow["{V^n}", from=3-7, to=4-7]
	\arrow["{U^{m-n}V^{n-1}}"'{pos=0.1}, from=3-7, to=4-5]
	\arrow["{V^n}", from=3-3, to=4-3]
	\arrow["{U^{m-n}V^{n-1}}"'{pos=0.1}, from=3-3, to=4-1]
	\arrow["{V^n}", from=3-1, to=4-1]
	\arrow["{U^{m-n}V^{n-1}}"'{pos=0.1}, from=3-5, to=4-3]
\end{tikzcd}\]
\caption{The relevant differentials in the $CFK_\mathcal{R}$ complex of $Q_{m,n}(K)$ when $\tau(K) \leq 0$, $\epsilon(K) = 1$.}
\label{fig:taul0eps1diff}
\end{figure}

\underline{\textit{Case 4: $\tau(K) < 0$ and $\epsilon(K) = -1$}}. The pairing diagram is displayed in Figure \ref{fig:pairing tau < 0 eps=-1}, where the remaining intersection point is $x_{2n-1}$, which has Alexander grading $-(m-n)(\tau(K)-1)$. This gives
\[\tau(Q_{m,n}(K))= -A(x_{2n-1}) = (m-n)(\tau(K)) + (m-1)) = (m-n)\tau(K) + (m-1).\]
In Figure \ref{fig:taul0eps-1diff}, we show the relevant differentials in the subcomplex containing $x_{2n+1}$, which is a distinguished element in some horizontally simplified basis. Since a vertical differential sends $y_1$ to $U^{n}x_{2n-1}$, we conclude that $\epsilon(Q_{m,n}(K))=1$.
\begin{figure}[!htb]
    \centering
    \includegraphics[scale=0.8]{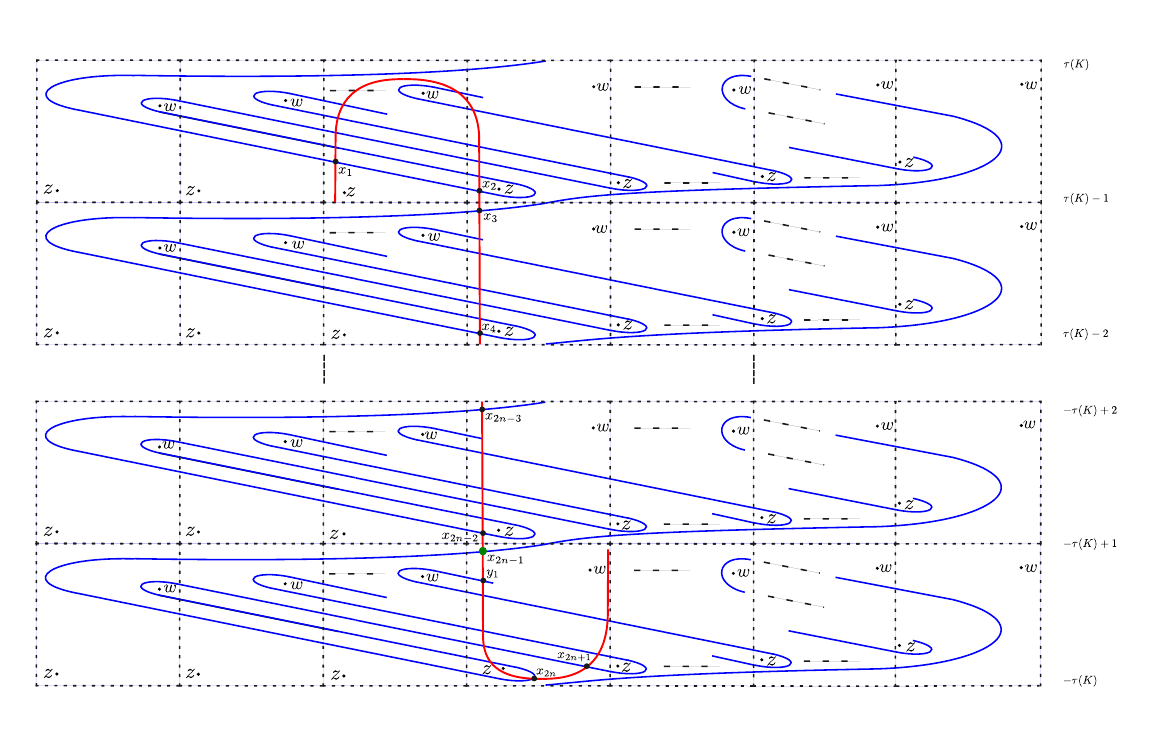}
    \caption{The pairing diagram for $Q_{m,n}(K)$ when $\tau(K) < 0, \epsilon(K) = -1$.}
    \label{fig:pairing tau < 0 eps=-1}
\end{figure}

\begin{figure}[!htb]

\[\begin{tikzcd}
	&& {x_2} && {x_1} \\
	&& {x_4} && {x_3} \\
	&& \vdots && \vdots && {y_1} \\
	{x_{2n+1}} && {x_{2n}} && {x_{2n-1}} &&& \textcolor{white}{\bullet}
	\arrow["{U^n}"{pos=0.3}, from=1-3, to=1-5]
	\arrow["{U^m}", from=1-3, to=2-5]
	\arrow["{U^n}"{pos=0.3}, from=2-3, to=2-5]
	\arrow["{U^m}", from=2-3, to=3-5]
	\arrow["{U^m}", from=3-3, to=4-5]
	\arrow["{U^n}"{pos=0.3}, from=4-3, to=4-5]
	\arrow["{U^m}"', from=4-3, to=4-1]
	\arrow["V"', from=3-7, to=4-5]
\end{tikzcd}\]
\caption{The relevant differentials in the $CFK_\mathcal{R}$ complex of $Q_{m,n}(K)$ when $\tau(K) < 0$, $\epsilon(K) = -1$.}
\label{fig:taul0eps-1diff}
\end{figure}
\end{proof}

The proof of Theorem \ref{main thm}
allows us to find the three-genus of the patterns $Q_{m,n}$. Recall that by \cite{OS04a}, the genus of the knot is the largest Alexander grading supporting non-zero Floer homology. Moreover, for a satellite knot $P(K)$, we have a genus formula from \cite{Sch53}
\begin{equation}\label{genus equation}
    g(P(K)) = |w(P)|g(K) + g(P),
\end{equation}
where $|w(P)|$ is the absolute value of the winding number.

\begin{proposition}
For $m,n\ge 1$, the generalized Mazur patterns have genus $g(Q_{m,n}) = \min(m,n).$
\end{proposition}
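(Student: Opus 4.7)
The plan is to combine Schubert's genus formula \eqref{schubert genus equation} with Theorem \ref{main thm} and the knot-Floer genus-detection property $g(J)=\max\{s\mid\widehat{HFK}(J,s)\ne 0\}$. By Proposition \ref{qmn isotopic}, we may assume without loss of generality that $m\ge n$, reducing the task to showing $g(Q_{m,n})=n$. The strategy is to pin down $g(Q_{m,n}(K))$ exactly for a well-chosen companion $K$, and then solve for $g(Q_{m,n})$ via Schubert.

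For the lower bound, I would take $K=T_{2,3}$, which satisfies $\tau(K)=g(K)=1$ and $\epsilon(K)=1$. Applying Theorem \ref{main thm} in the case $\tau(K)>0$, $\epsilon(K)=1$ gives $\tau(Q_{m,n}(T_{2,3}))=(m-n)+n=m$, and since $\tau\le g$ always, we obtain $g(Q_{m,n}(T_{2,3}))\ge m$. When $m>n$, Schubert's formula reads $g(Q_{m,n}(T_{2,3}))=(m-n)+g(Q_{m,n})$, yielding $g(Q_{m,n})\ge n$. When $m=n$, the winding number vanishes and Schubert's formula specializes to $g(Q_{m,m}(T_{2,3}))=g(Q_{m,m})$, so that $g(Q_{m,m})\ge m$.

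For the upper bound, I would use the pairing diagram $\mathcal{H}(\alpha_{T_{2,3}})$ constructed in the proof of Theorem \ref{main thm} (Case 1). Since $\widehat{HFK}$ detects the genus, it suffices to show that no intersection point in the pairing diagram has Alexander grading of absolute value exceeding $m$. The hyperelliptic involution of the doubly-pointed torus induces a symmetry $A(\iota(x))=-A(x)$ on generators, so it is enough to bound the minimum Alexander grading from below by $-m$. In the Case 1 analysis of Theorem \ref{main thm}, the generator $x_1$ has $A(x_1)=-m$; the task is to verify that $x_1$ is actually the generator of minimum Alexander grading in the \emph{full} pairing diagram. Once this is established, combining the two bounds gives $g(Q_{m,n}(T_{2,3}))=m$, and Schubert's formula delivers $g(Q_{m,n})=n=\min(m,n)$; the $m=n$ case follows identically using Schubert with $w(Q_{m,m})=0$.

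The main obstacle I anticipate is precisely the last verification. The pairing diagrams drawn in Section \ref{IMMERSED COMPUTATION} only display a neighborhood of the unstable segment of $\alpha_{T_{2,3}}$, whereas the immersed curve also has horizontal and vertical chains whose intersections with $\tilde\beta$ contribute additional generators of $gCFK^-(Q_{m,n}(T_{2,3}))$. One must argue that every such intersection lies in a row whose vertical offset from the fixed point $c$ keeps its Alexander grading inside $[-m,m]$. A backup route that avoids this intersection count is the direct geometric upper bound: using the 2-bridge/tangle presentation $C(2n,1,2m)$ from Proposition \ref{braid decomp}, one can explicitly build a relative Seifert surface for $Q_{m,n}$ of genus $n$ by cancelling $n$ of the $m$ longitudinal strands against the $n$ return strands with bands at the clasp, which directly yields $g(Q_{m,n})\le n$ and therefore the desired equality.
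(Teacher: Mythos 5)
Your proposal is correct and follows essentially the same route as the paper: pair Schubert's formula with Theorem \ref{main thm} for the companion $T_{2,3}$ and the genus-detection property of $\widehat{HFK}$, reducing everything to showing that the surviving generator $x_1$ of the pairing diagram realizes the extremal Alexander grading, so that $g(Q_{m,n}(T_{2,3}))=\tau(Q_{m,n}(T_{2,3}))=m$. The verification you flag as the main obstacle is precisely the step the paper dispatches by inspection of Figure \ref{fig:pairing tau>0 eps=1} (asserting that $x_1$ has the lowest Alexander grading, hence the largest after orientation reversal), so once that observation is granted your argument coincides with the published one and the backup Seifert-surface construction is not needed.
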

\begin{proof}
We take $P$ to be $Q_{m,n}$ and reverse the orientation (where without loss of generality we assume that $m\ge n$), which has winding number $m-n$, and $K$ to be the right-handed trefoil $T_{2,3}$, which has $\tau(T_{2,3})=\epsilon(T_{2,3})= g(T_{2,3}) = 1$. By Equation \eqref{genus equation}, we have
\begin{equation} \label{genus equation 2}
g(Q_{m,n}(T_{2,3}))=(m-n)+g(Q_{m,n}).
\end{equation}
We want to compute $g(Q_{m,n}(T_{2,3}))$, which is equal to the largest Alexander grading for which $\widehat{HFK}(Q_{m,n}(K))$ is non-zero. Consider the pairing diagram in Figure \ref{fig:pairing tau>0 eps=1} from the proof of Theorem \ref{eps thm} and note that $x_1$ has the lowest Alexander grading, but after taking the reverse this generator has the largest Alexander grading. Since $x_1$ is also the distinguished generator,
\[g(Q_{m,n}(T_{2,3}))=A(x_1) = \tau(Q_{m,n}(T_{2,3})) = |m-n|\tau(T_{2,3}) + n = m.\]
Plugging into Equation \eqref{genus equation 2}, we get $g(Q_{m,n}) = n$. For any $m,n\ge 1$, we have $g(Q_{m,n}) = \min(m,n)$.
\end{proof}

\appendix \label{compute (m,n)}
\section{Computation of $\tau(Q_{m,n}(K))$ using bordered Heegaard Floer homology}
We give an alternative way of computing the $\tau$-invariant for satellites along generalized Mazur patterns, using the ordinary bordered Heegaard Floer homology. This is a generalization of the method used in \cite{Lev16}. As expected, we obtain the same value of $\tau(Q_{m,n}(K))$ as in Theorem \ref{main thm}.

The organization of the Appendix is as follows: in Section \ref{strategy}, we describe our strategy for computing $\tau(P(K))$; in Section \ref{Examples}, we give a few examples for computing $\tau(Q_{m,n}(K))$ for small values of $m$ and $n$; and in Section \ref{computation}, we carry out the computation for general $m$ and $n$. 

  \subsection{Bordered strategy to determine $\tau(P(K))$} \label{strategy} 
  In this section, we follow closely the notation in \cite{Lev16}. Recall that an alternative definition of the invariant $\tau(K)$ is 
     \[\tau(K)=-\max\{s\mid U^n\cdot HFK^-(K,s)\ne 0 \textrm{ for all } n\ge 0\}.\] 
 In other words, $\tau(K)$ is minus the Alexander grading of the non-vanishing generator for $\mathbb{F}[U]$ in $HFK^-(S^3, K)$. Recall also the pairing theorem \cite[Theorem 1.3]{LOT18}: for a satellite knot with pattern $P\subset V$ and companion $K$, we have 
    \begin{equation}\label{pairing}
    gCFK^-(P(K))\simeq CFA^-(V,P)\boxtimes \widehat{CFD}(X_K).
    \end{equation}
Here, we may again assume that the pattern knot $P$ is a $(1,1)$-unknot pattern. 

Our goal is to directly compute the associated graded on the left hand side. We discussed how to compute $CFA^{-}$ in Section \ref{CFA hat}, and 
$\widehat{CFD}(X_K)$ in Section \ref{sec: CFD hat}. The box tensor product $CFA^-(V,P)\boxtimes \widehat{CFD}(X_K)$ is defined to be the $\mathbb{F}$-vector space
\[CFA^-(V,P)\boxtimes_{\mathcal{I}} \widehat{CFD}(X_K)\]
with the differential now given by the combinatorial formula
\[\partial^\boxtimes (x\otimes y) = \sum_{k+1}(x,\rho_{i_1},\cdots, \rho_{i_k})D_{i_k}\circ\cdots \circ D_{i_1}(y)\]
where the sum is taken over all $k$-element sequences $i_1,\cdots, i_k$ (including the empty sequence when $k=0$) of elements in $\{\emptyset, 1, 2, 3, 12, 23, 123\}$.

The box tensor product in Equation \eqref{pairing} consists of one direct summand whose homology contains a $\mathbb{F}[U]$ part, obtained by box tensoring with a unstable neighborhood in $\widehat{CFD}(X_K)$, and other summands whose homology is $U$-torsion. We are interested only in the summand with non-vanishing homology, since the Alexander grading of its generator gives us $-\tau(P(K))$.

It remains determine the absolute Alexander grading in $gCFK^-(S^3, P(K))$, which we denote by $A_{Q_{m,n}(K)}$. This can be achieved by the following  proposition in \cite{Lev16}.


\begin{proposition}\cite[Proposition 2.2]{Lev16}\label{key prop}
Let $P \subset V$ be a based knot with winding number $m$. For each element $a \in \mathrm{CFA}^{-}(V, P) \cdot \iota_0$, there exists a constant $C_a$ with the following property: For any knot $K \subset S^3$, and any homogeneous element $x \in \iota_0 \widehat{\mathrm{CFD}}\left(X_K\right)$, we have
\begin{equation} \label{levine eq}
A_{P(K)}(a \otimes x)=m A_K(x)+C_a.
\end{equation}
\end{proposition}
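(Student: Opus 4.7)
The plan is to prove the relative version
\[
A_{P(K)}(a\otimes x) - A_{P(K)}(a\otimes x') \;=\; m\bigl(A_K(x) - A_K(x')\bigr)
\]
for all homogeneous $x,x' \in \iota_0 \widehat{\CFD}(X_K)$, from which the proposition follows by setting $C_a := A_{P(K)}(a\otimes x_0) - m\,A_K(x_0)$ for any fixed reference generator $x_0$; this constant visibly depends only on $a$ and on $P$, not on $K$ or $x$.

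The argument rests on two ingredients. The first is the standard description of relative Alexander gradings in terms of relative $\spinc$ structures: on a doubly-pointed Heegaard diagram, the Alexander grading difference between two generators in the same $\spinc$ class equals the pairing of the $\spinc$-structure difference with the class of a Seifert surface. The second is that under the bordered pairing, $\mathfrak{s}(a,x)-\mathfrak{s}(a,x')$ in $(S^3, P(K))$ is entirely concentrated on the $X_K$-side (since $a$ is common) and equals the image of the difference of relative $\spinc$ structures $\mathfrak{s}_{X_K}(x)-\mathfrak{s}_{X_K}(x')$ under gluing; this is part of the $\spinc$-compatibility of the pairing theorem, see \cite[Chapter 11]{LOT18}.

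The geometric content is the construction of a Seifert surface $\hat F$ for $P(K)$ adapted to the bordered decomposition. Since $P$ has winding number $m$, there is a relative Seifert surface $\Sigma_V\subset V$ for $P$ whose boundary is $P$ together with $m$ coherently oriented longitudes on $\partial V$. Gluing $\Sigma_V$ to $m$ parallel pushoffs $F^{(1)},\dots,F^{(m)}$ of a Seifert surface $F$ for $K$ yields such an $\hat F$, with $\hat F \cap X_K = \bigsqcup_{i=1}^m F^{(i)}$. Pairing the ($X_K$-supported) $\spinc$ difference with $[\hat F]$ therefore breaks into a sum of $m$ identical contributions, one for each $F^{(i)}$, each of magnitude $A_K(x)-A_K(x')$. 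This yields the relative formula and hence the proposition.

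The main obstacle is the bookkeeping with relative $\spinc$ structures on bordered manifolds with torus boundary: one must verify both additivity under gluing and the identification of the $X_K$-side relative structure (and its $c_1$ pairing with a copy of $F$) with the quantity used to define $A_K$. Both facts are established in \cite[Chapter 11]{LOT18}; once invoked, the topological content reduces to the parallelism of the $m$ copies $F^{(i)}$, which delivers the factor of $m$ for free.
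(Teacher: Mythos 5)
The paper does not actually prove this proposition; it is imported verbatim from \cite[Proposition 2.2]{Lev16}, and your argument is essentially a reconstruction of Levine's original proof: decompose the capped-off Seifert surface $\widehat F$ for $P(K)$ across the bordered splitting as a relative Seifert surface $\Sigma_V \subset V$ glued to $m$ parallel copies of a Seifert surface $F$ for $K$, and use the $\spinc$-compatibility of the pairing theorem from \cite[Chapter 11]{LOT18} to split the $c_1$-evaluation accordingly. So the approach is the right one, and the geometric mechanism producing the factor of $m$ (each of the $m$ parallel copies $F^{(i)}$ contributing one copy of the $X_K$-side evaluation) is exactly the correct one.

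The one soft spot is logical rather than geometric: as written, everything is phrased in terms of relative gradings. Proving $A_{P(K)}(a\otimes x)-A_{P(K)}(a\otimes x')=m\bigl(A_K(x)-A_K(x')\bigr)$ shows that $A_{P(K)}(a\otimes x)-mA_K(x)$ is independent of $x$ for each \emph{fixed} $K$, but it does not by itself show that the resulting constant is independent of $K$ --- and that independence is the real content of the proposition, since it is what allows $C_a$ to be computed once with $K$ the unknot and then applied to arbitrary companions, which is precisely how the proposition is used in the appendix. Declaring that the constant ``visibly'' depends only on $a$ and $P$ begs this question. To close the gap, run your same decomposition on the \emph{absolute} pairing $\langle c_1(\mathfrak{s}(a\otimes x)),[\widehat F]\rangle$ that defines $A_{P(K)}$: the $V$-side term $\langle c_1(\mathfrak{s}(a)),[\Sigma_V]\rangle$ depends only on $a$ and the fixed surface $\Sigma_V$, hence only on $a$ and $P$, and this (up to universal normalization constants) is $C_a$; the $X_K$-side term is $2mA_K(x)$ by the argument you already give. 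All the ingredients for this are present in your write-up, so this is a matter of rearranging the argument around the absolute rather than the relative evaluation, not a missing idea.
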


In what follows, we will set $P$ to be the generalized Mazur pattern $Q_{m,n}$. Note that the constant $C_a$ in equation \eqref{levine eq} is independent of $K$. This allows us to determine $C_a$ by setting $K$ to be the unknot $O$. Since $Q_{m,n}(O)$ is also the unknot, the tensor product complex $CFA^-(V,Q_{m,n})\boxtimes \widehat{CFD(X_O)}$ is generated over $\mathbb{F}[U]$ by a single element with Alexander grading 0. Moreover, we have $A_O(x)=0$ for any $x\in \iota_0 \widehat{\mathrm{CFD}}\left(X_O\right)$. Equation \eqref{levine eq} then allows us to determine the constant $C_a$ for certain generators $a\in \mathrm{CFA}^{-}(V, Q_{m,n})$. 

For any knot $K$, we may apply Proposition \ref{key prop} again to compute the absolute Alexander gradings of the relevant generators of $CFK^-(S^3, Q_{m,n}(K))$. This computation is sufficient to determine $\tau(Q_{m,n}(K))$; we will carry out this computation in more detail in Section \ref{computation}. Before that, we give some examples of how this strategy may be applied for small values of $m$ and $n$.

\subsection{Examples}\label{Examples}
\begin{example}[The pattern $Q_{3,1}$]\label{Ex:Q31}
    First, we obtain the following Heegaard diagram for the pattern $Q_{3,1}$ by applying the inductive procedure in Proposition \ref{h diagram}. The resulting diagram is shown in Figure \ref{fig:Q31}. Let $x_1$ to $x_7$ be the intersections of the $\beta$ curve with the vertical $\alpha_1$ arcs, ordered from top to bottom, and let $y_1$ to $y_{12}$ be the intersections with the horizontal $\alpha_2$ arcs from left to right.
    
    \begin{figure}[htbp]
        \centering
        \includegraphics[scale=0.55]{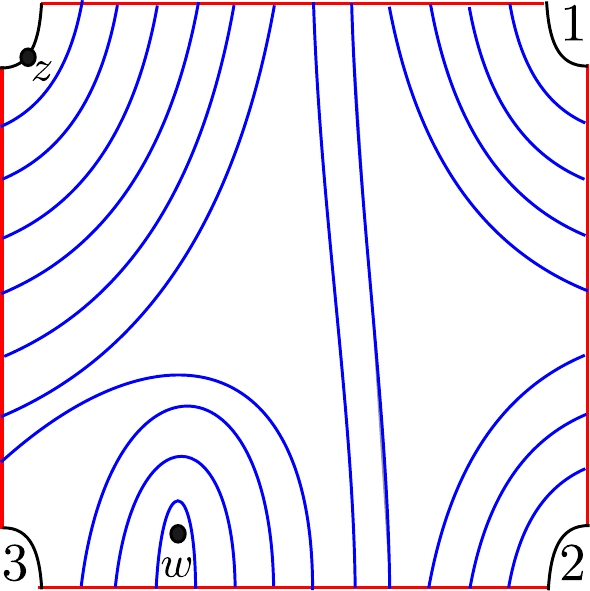}
        \caption{The Heegaard diagram for $Q_{3,1}$.}
        \label{fig:Q31}
    \end{figure}
    
    Next, we find all the pseudoholomorphic disks in the universal cover, shown in Figure \ref{fig:universal_cover}.
    
    \begin{figure}[htbp]
    \centering
    \includegraphics[scale = 0.5]{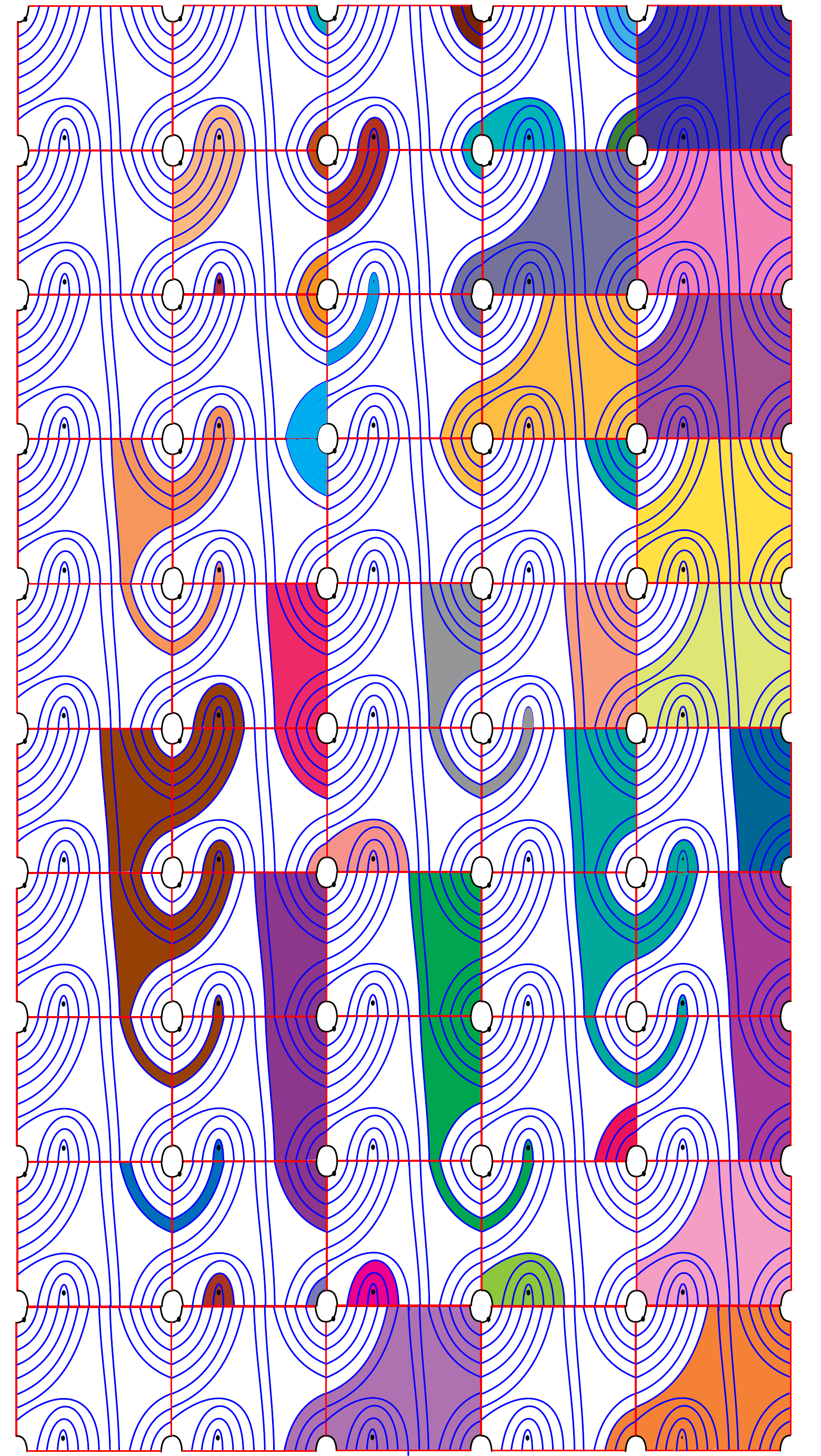}
    \caption{The universal cover of the Heegaard diagram for $Q_{3,1}$.}
    \label{fig:universal_cover}
    \end{figure}

    The corresponding $A_\infty$ multiplications are:
    \begin{center}
        \begin{AutoMultiColItemize}
        \item \crule[Apricot]{0.5cm}{0.25cm} $m_1(x_1) = Ux_6$
        \item \crule[Aquamarine]{0.5cm}{0.25cm} $m_2(x_1,\rho_1) = y_{12}$
        \item \crule[Bittersweet]{0.5cm}{0.25cm} $m_2(x_1,\rho_{12}) = x_{7}$
        \item \crule[BlueGreen]{0.5cm}{0.25cm} $m_2(x_1,\rho_{123}) = Uy_{7}$
        \item \crule[BlueViolet]{0.5cm}{0.25cm} $m_4(x_1,\rho_{3},\rho_{2},\rho_{1}) = Uy_{1}$
        \item \crule[BrickRed]{0.5cm}{0.25cm} $m_1(x_2) = Ux_{5}$
        \item \crule[Brown]{0.5cm}{0.25cm} $m_2(x_2,\rho_1) = y_{11}$
        \item \crule[BurntOrange]{0.5cm}{0.25cm} $m_2(x_2,\rho_{12}) = x_{6}$
        \item \crule[CadetBlue]{0.5cm}{0.25cm} $m_4(x_2,\rho_{123},\rho_2,\rho_1) = Uy_{6}$
        \item \crule[CarnationPink]{0.5cm}{0.25cm} $m_4(x_2,\rho_{3},\rho_2,\rho_1) = Uy_{2}$
        \item \crule[Cerulean]{0.5cm}{0.25cm} $m_1(x_3) = Ux_{4}$
        \item \crule[CornflowerBlue]{0.5cm}{0.25cm} $m_2(x_3,\rho_1) = y_{10}$
        \item \crule[Cyan]{0.5cm}{0.25cm} $m_2(x_3,\rho_{12}) = x_{5}$
        \item \crule[Dandelion]{0.5cm}{0.25cm} $m_4(x_3,\rho_{123},\rho_2,\rho_1) = Uy_{5}$
        \item \crule[DarkOrchid]{0.5cm}{0.25cm} $m_4(x_3,\rho_{3},\rho_2,\rho_1) = Uy_{3}$
        \item \crule[WildStrawberry]{0.5cm}{0.25cm} $m_2(x_4,\rho_1) = y_{9}$
        \item \crule[ForestGreen]{0.5cm}{0.25cm} $m_3(x_4,\rho_{12},\rho_1) = y_{8}$
        \item \crule[Fuchsia]{0.5cm}{0.25cm} $m_4(x_4,\rho_{12},\rho_{12},\rho_1) = y_{7}$
        \item \crule[Goldenrod]{0.5cm}{0.25cm} $m_4(x_4,\rho_{3},\rho_2,\rho_1) = Uy_{4}$
        \item \crule[Gray]{0.5cm}{0.25cm} $m_2(x_5,\rho_1) = Uy_{8}$
        \item \crule[Green]{0.5cm}{0.25cm} $m_3(x_5,\rho_{12},\rho_1) = Uy_{7}$
        \item \crule[GreenYellow]{0.5cm}{0.25cm} $m_4(x_5,\rho_{3},\rho_2,\rho_1) = Uy_{5}$ 
        \item \crule[JungleGreen]{0.5cm}{0.25cm} $m_2(x_6,\rho_1) = U^{2}y_{7}$
        \item \crule[Lavender]{0.5cm}{0.25cm} $m_4(x_6,\rho_3,\rho_2,\rho_1) = Uy_{6}$
        \item \crule[LimeGreen]{0.5cm}{0.25cm} $m_2(x_7,\rho_3) = Uy_{7}$
        \item \crule[Magenta]{0.5cm}{0.25cm} $m_1(y_1) = Uy_{6}$
        \item \crule[Mahogany]{0.5cm}{0.25cm} $m_1(y_2) = Uy_{5}$
        \item \crule[Maroon]{0.5cm}{0.25cm} $m_1(y_3) = Uy_{4}$
        \item \crule[Melon]{0.5cm}{0.25cm} $m_3(y_8,\rho_2,\rho_1) = y_{7}$
        \item \crule[MidnightBlue]{0.5cm}{0.25cm} $m_3(y_9,\rho_2,\rho_1) = y_{8}$
        \item \crule[Mulberry]{0.5cm}{0.25cm} $m_4(y_9,\rho_2,\rho_{12},\rho_1) = y_{7}$
        \item \crule[OrangeRed]{0.5cm}{0.25cm} $m_2(y_{10},\rho_2) = x_{5}$
        \item \crule[Orchid]{0.5cm}{0.25cm} $m_4(y_{10},\rho_{23},\rho_2,\rho_1) = Uy_{5}$
        \item \crule[NavyBlue]{0.5cm}{0.25cm} $m_1(y_{10}) = Uy_{9}$
        \item \crule[OliveGreen]{0.5cm}{0.25cm} $m_2(y_{11},\rho_2) = x_{6}$
        \item \crule[Orange]{0.5cm}{0.25cm} $m_4(y_{11},\rho_{23},\rho_2,\rho_1) = Uy_{6}$
        \item \crule[Peach]{0.5cm}{0.25cm} $m_1(y_{11}) = U^{2}y_{8}$
        \item \crule[Periwinkle]{0.5cm}{0.25cm} $m_2(y_{12},\rho_2) = x_{7}$
        \item \crule[Salmon]{0.5cm}{0.25cm} $m_2(y_{12},\rho_{23}) = Uy_{7}$
        \item \crule[RawSienna]{0.5cm}{0.25cm} $m_1(y_{12}) = U^{3}y_7$ 
        \end{AutoMultiColItemize}
    \end{center}
These relations can be represented by the following diagram
\begin{equation*}
        \begin{tikzcd}
            x_7 \arrow[rdd, swap, "U\rho_{3}"] & y_{12} \arrow[l, swap, "\rho_2"] \arrow[dd,swap, "\substack{U^3 + \\ U\rho_{23}}", near start] & x_1 \arrow[l, swap, "\rho_1"] \arrow[ll, swap, bend right, "\rho_{12}"] \arrow[ldd,swap,"U\rho_{123}",sloped] \arrow[dd, swap, "U"] & & y_{11} \arrow[lldd, swap, "\rho_2"] \arrow[dd, swap, "U^2"] & x_2 \arrow[l, swap, "\rho_1"] \arrow[dd, swap, "U"] \arrow[llldd, swap, bend right=60, "\rho_{12}"] & & y_{10} \arrow[lldd, swap, "\rho_2"] \arrow[dd, swap, "U"] & x_3 \arrow[l, swap, "\rho_1"] \arrow[dd, swap, "U"] \arrow[llldd, swap, bend right=60, "\rho_{12}"] & y_1 \arrow[dd, swap, "U"] & y_2 \arrow[dd, swap, "U"] & y_3 \arrow[dd, swap, "U"]
            \\
            \\
            & y_7  & x_6 \arrow[l, swap, "U^{2}\rho_1"] & & y_8 \arrow[lll, swap, bend left, "\rho_{2}\rho_{1}"] & x_5 \arrow[l, swap, "U\rho_1"] \arrow[llll, swap, bend left=40, "U\rho_{12}\rho_1"] & & y_9 \arrow[lll, swap, bend left, "\rho_{2}\rho_{1}"] \arrow[llllll, swap, bend left=50, "\rho_{2}\rho_{12}\rho_{1}"] & x_4 \arrow[l, swap, "\rho_1"] \arrow[llll, swap, bend left=40, "U\rho_{12}\rho_1"] \arrow[lllllll, swap, bend left = 50, "\rho_{12}\rho_{12}\rho_1"] & y_6 & y_5 & y_4
        \end{tikzcd}
    \end{equation*}
One may observe that a few differentials are not represented in the diagram above, namely 
\begin{equation*}
\begin{aligned}
x_1 &\stackrel{U \rho_3\rho_2\rho_1}{\longrightarrow} y_1, &
y_{11} &\stackrel{U \rho_{23}\rho_2\rho_1}{\longrightarrow} y_6, &
x_2 &\stackrel{U \rho_{123}\rho_2\rho_1}{\longrightarrow} y_6, &
x_6 &\stackrel{U \rho_3\rho_2\rho_1}{\longrightarrow} y_6, &
x_2 &\stackrel{U \rho_3\rho_2\rho_1}{\longrightarrow} y_2, \\
y_{10} &\stackrel{U \rho_{23}\rho_2\rho_1}{\longrightarrow} y_5, &
x_3 &\stackrel{U \rho_{123}\rho_2\rho_1}{\longrightarrow} y_5, &
x_5 &\stackrel{U \rho_3\rho_2\rho_1}{\longrightarrow} y_5, &
x_3 &\stackrel{U \rho_3\rho_2\rho_1}{\longrightarrow} y_3, &
x_4 &\stackrel{U \rho_3\rho_2\rho_1}{\longrightarrow} y_4.
\end{aligned}
\end{equation*}
However, we may apply the following change of basis to eliminate these arrows: $x_6' = x_6 + U\rho_3\rho_2\rho_1y_1$, $x_5' = x_5 + U\rho_3\rho_2\rho_1y_2$, and $x_4' = x_4 + U\rho_3\rho_2\rho_1y_3$.

\end{example}

\begin{example}[The pattern $Q_{1,2}$]
\begin{figure}[htbp]
    \centering
    \begin{minipage}{0.45\textwidth}
        \centering
        \includegraphics[scale=0.6]{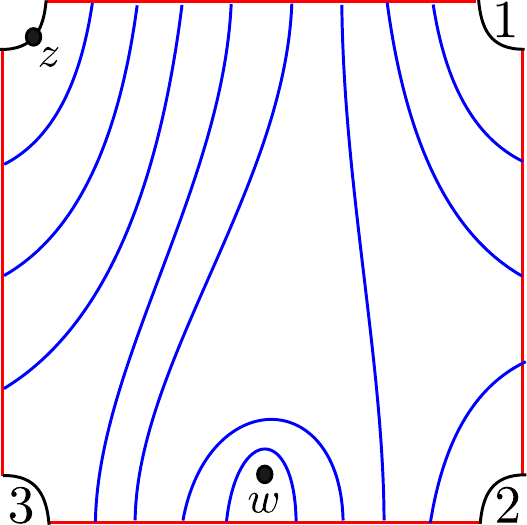}
        \caption{The Heegaard diagram for $Q_{1,2}$.}
        \label{fig:Q12}
    \end{minipage}
    \hfill
    \begin{minipage}{0.5\textwidth}
        \centering
        \includegraphics[scale=0.54]{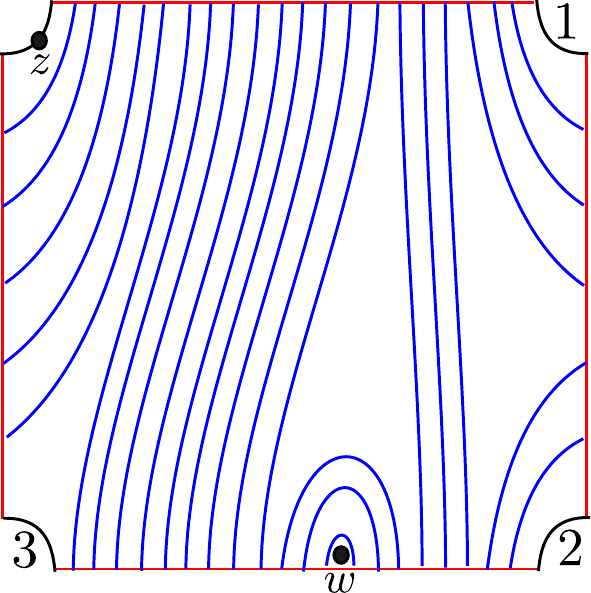}
        \caption{The Heegaard diagram for $Q_{2,3}$.}
        \label{fig:Q23}
    \end{minipage}
    \label{fig:Q12_Q23}
\end{figure}

    The Heegaard diagram for $Q_{1,2}$ is shown in Figure \ref{fig:Q12}. After counting  pseudoholomorphic disks in the universal cover, we obtain the following $A_\infty$ relations:
\begin{AutoMultiColItemize}
    \item $m_1(x_1)=Ux_2$
        \item $m_2(x_1, \rho_1)=y_8$
        \item $m_2(x_2, \rho_1)=y_7$
        \item $m_2(x_1, \rho_{12})=x_3$
        \item $m_2(x_1, \rho_{123})=U^2y_7$
        \item $m_2(x_3, \rho_{3})=U^2y_7$
        \item $m_2(y_8, \rho_2) = x_3$
        \item $m_2(y_8, \rho_{23}) = U^2y_7$
        \item $m_1(y_8) = U^2y_7$
        \item $m_4(x_1,\rho_{123},\rho_2, \rho_1)=Uy_3$
        \item $m_4(x_1,\rho_{3},\rho_2, \rho_1)=Uy_1$
        \item $m_5(x_1,\rho_3,\rho_2,\rho_{12},\rho_1)=U^2y_4$
        \item $m_3(x_2,\rho_{12},\rho_1)=y_6$
        \item $m_4(x_2,\rho_3,\rho_2, \rho_1)=Uy_2$
        \item $m_5(x_2,\rho_3,\rho_2,\rho_{12},\rho_1)=U^2y_5$
        \item $m_2(x_3,\rho_1)=Uy_6$
        \item $m_4(x_3,\rho_3,\rho_2, \rho_1)=Uy_3$
        \item $m_1(y_1)=Uy_2$
        \item $m_1(y_3)=Uy_6$
        \item $m_1(y_4)=Uy_5$
        \item $m_3(y_7,\rho_2,\rho_1)=y_6$
        \item $m_3(y_8,\rho_{23},\rho_2, \rho_1)=Uy_3$
        \item $m_3(y_1, \rho_{2}\rho_{1}) = Uy_4$
        \item $m_3(y_2, \rho_{2}\rho_{1}) = Uy_5$
  \end{AutoMultiColItemize}
\noindent After applying the change of basis in Remark \ref{rmk change of basis}, these relations are represented by the following $CFA^{-}$ complex
    \begin{equation*}
        \begin{tikzcd}
        y_3 \arrow[dd, "U", swap] 
        & x_3 \arrow[l, "U\rho_{3}\rho_{2}\rho_{1}", swap] \arrow[ldd, "U\rho_1"] \arrow[rrdd, swap, "U^{2}\rho_{3}"] & & y_8 \arrow[ll, swap, "\rho_{2}"]  \arrow[dd, swap, "\substack{U + \\ U^{2}\rho_{23}}"]
            & & x_1 \arrow[lldd, swap,"\mathclap{U^2\rho_{123}}",sloped] \arrow[ll, swap,  "\rho_{1}"] \arrow[llll, bend right=30, swap, "\rho_{12}"] \arrow[dd, "U"] \arrow[lllll, bend right=60, swap, "U\rho_{123}\rho_{2}\rho_{1}"] & y_{1} \arrow[dd, "U"] & y_{4} \arrow[dd, "U"]\\
             & & & & & \\  y_6 & & & y_7 \arrow[lll, swap, bend left, "\rho_{2}\rho_{1}"]
            & & x_2 \arrow[ll, swap, "\rho_{1}"] \arrow[lllll, swap, bend left, "\rho_{12}\rho_{1}"] & y_2 & y_5
    \end{tikzcd}
       \end{equation*}

As before, one may check that the summands in the box tensor product with $\widehat{CFD}(X_K)$ look like the figures given for each value of $\tau(K)$ after plugging in $m=1$ and $n=2$. The only difference is that there are no dashed arrows, since there are no $\rho_2$ arrows pointing to $x_2$ in $CFA^-(V, Q_{1,2})$ that will pair nontrivially with the $D_2$ differential fron $\lambda$ to $\xi_0$. 
\end{example}

\begin{example}[The pattern $Q_{2,3}$]
We describe the $A_\infty$-module $CFA^-(Q_{2,3}(K)).$ Using the inductive procedure outlined in Proposition \ref{h diagram}, we obtain the Heegaard diagram in Figure \ref{fig:Q23}. 

    We provide a less exhaustive $CFA^{-}$ structure below in Figure \ref{cfa23}, not including the isolated components. There are also arrows from $y_{18}$ and $x_3$ to $y_{16}$ featuring compositions of the arrows going through $y_{17}$, which have also been omitted for clarity.
    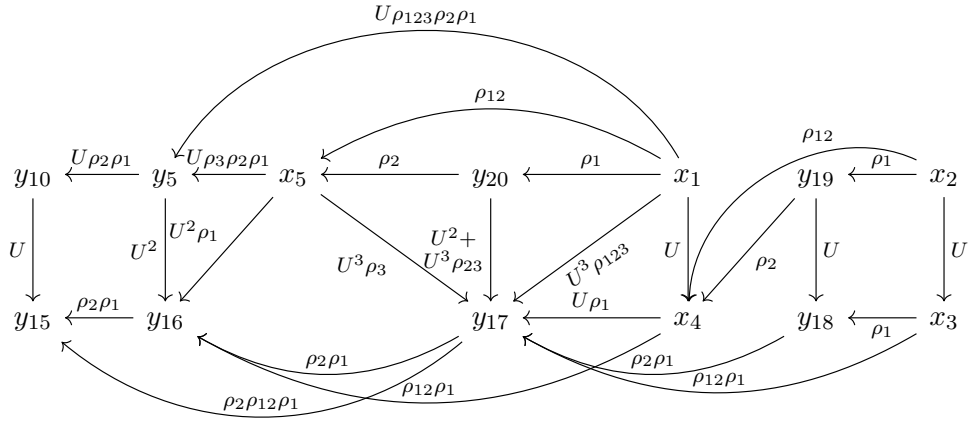
\begin{figure}[htbp]
    \begin{tikzcd}
         y_{10} \arrow[dd, swap, "U"] & y_5 \arrow[l, swap, "U\rho_{2}\rho_{1}"] \arrow[dd, swap, "U^2"] & x_5 \arrow[l, swap, "U\rho_{3}\rho_{2}\rho_{1}"] \arrow[ldd, swap, "U^{2}\rho_1"] \arrow[rrdd, swap, "U^{3}\rho_{3}"] 
        & & y_{20} \arrow[ll, swap, "\rho_{2}"]  \arrow[dd, swap, "\substack{U^{2} +\\ U^{3}\rho_{23}}"]
            & & x_1 \arrow[lldd, swap, "\mathclap{U^{3}\rho_{123}}",sloped] \arrow[ll, swap, "\rho_{1}"] \arrow[llll, bend right=30, swap, "\rho_{12}"] \arrow[dd, swap, "U"] \arrow[lllll, bend right=60, swap, "U\rho_{123}\rho_{2}\rho_{1}"]
            & y_{19} \arrow[ldd, "\rho_{2}"] \arrow[dd, "U"] & x_2 \arrow[l, swap, "\rho_1"] \arrow[dd, "U"] \arrow[lldd, swap, bend right=57, "\rho_{12}", near start]\\
         & & & & & &\\
         y_{15} & y_{16} \arrow[l, swap, "\rho_2\rho_1"] & & & y_{17} \arrow[llll, swap, bend left=40, "\rho_2\rho_{12}\rho_1"] \arrow[lll, swap, bend left, "\rho_{2}\rho_{1}"] & & x_4 \arrow[ll, swap, "U\rho_{1}"] \arrow[lllll, swap, bend left, "\rho_{12}\rho_{1}"]
            & y_{18} \arrow[lll, swap, bend left, "\rho_{2}\rho_{1}"] & x_{3} \arrow[l, "\rho_{1}"] \arrow[llll, swap, bend left, "\rho_{12}\rho_{1}"]
    \end{tikzcd}
    \caption{Part of the $A_{\infty}$-module $CFA^{-}(Q_{2,3}(K))$.}
    \label{cfa23}
    \end{figure}
\end{example}

\subsection{Computation of $\tau(Q_{m,n}(K))$}\label{computation}
In the case of general $m,n$, the invariant $\tau(Q_{m,n}(K))$ is given by the following theorem.
\begin{theorem}\label{tau thm}
Let $Q_{m,n}$ be the generalized Mazur pattern embedded in the solid torus $V$. If $m\ne n$, then for any knot $K\subset S^3$, we have \begin{equation}\label{tau eq}
    \tau(Q_{m,n}(K)) = \begin{cases}
    |m-n|\tau(K) & \text{if } \tau(K) \le 0 \text{ and } \epsilon(K) \in \{0,1\}, \\
    |m-n|\tau(K) + |m-n| & \text{if } \tau(K) < 0 \text{ and } \epsilon(K)=-1, \\
    |m-n|\tau(K) + \min(m,n) & \text{if } \tau(K) > 0 \text{ and }\epsilon(K) = 1,\\
    |m-n|\tau(K) + \max(m,n) -1& \text{if } \tau(K) \ge 0 \text{ and }\epsilon(K) = -1.
    \end{cases}
    \end{equation}
In the case where $m=n$, we have 
\begin{equation}\label{tau eq 3}
    \tau(Q_{m,m}(K)) = \begin{cases}
    0 & \text{if } \tau(K) < 0, \\
    m-1 & \text{if } \tau(K) = 0, \\
    m & \text{if } \tau(K) > 0.
    \end{cases}
    \end{equation}

\end{theorem}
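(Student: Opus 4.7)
The plan is to mimic Levine's bordered computation for the Mazur pattern, now with the extra bookkeeping that $m,n$ introduce. By Proposition \ref{qmn isotopic} we may assume $m\ge n$, and the case $\epsilon(K)=0$ reduces to $K=U$ via $\epsilon$-equivalence, so we concentrate on $\epsilon(K)=\pm 1$. The engine is the pairing theorem
\[
gCFK^-(Q_{m,n}(K))\simeq CFA^-(V,Q_{m,n})\boxtimes \widehat{CFD}(X_K),
\]
combined with Theorem \ref{thm algo}, which lets us assume $\widehat{CFD}(X_K)$ is in the standard form of Figure \ref{fig:unstablechaint>0}. Only the unstable neighborhood (together with the first few generators of the adjacent horizontal/vertical chains) will contribute to the $\mathbb{F}[U]$-nontorsion summand, so the computation reduces to pairing $CFA^-(V,Q_{m,n})$ against a small, explicitly described portion of $\widehat{CFD}(X_K)$.

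First, I will extract the $A_\infty$-multiplications on $CFA^-(V,Q_{m,n})$ from the Heegaard diagram $H_{m,n}$ built in Proposition \ref{h diagram}. The worked cases $Q_{3,1},Q_{1,2},Q_{2,3}$ in Section \ref{Examples} reveal a uniform pattern: one obtains chains of generators labeled $x_1,\dots,x_{2m+1}$ on the vertical $\alpha_1$-arc and $y_1,\dots,y_{2m+2n+2nm-2}$ on the horizontal $\alpha_2$-arc, connected by multiplications whose algebra inputs are controlled by how often the $\beta$-curve winds through each Reeb chord region. I will record the multiplications into four groups, paralleling Figure \ref{cfa23}: the ``left-of-clasp'' arrows with inputs $\rho_1$, $\rho_{12}$, $\rho_{123}$ coming out of $x_1$; the $\rho_3$-output at $x_{2m+1}$ which is responsible for the ``turn-around''; the long $\rho_2\rho_1,\rho_{12}\rho_1$ chains down the right side; and the differentials between consecutive $y_i$'s of filtration difference $1$. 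The key structural claim I will prove by induction on $m$ and then on $n$ is that the generators and multiplications of $H_{m,n}$ assemble into $m+n$ parallel ``strips'', $m$ of them each carrying the Mazur-type pattern and $n$ of them each carrying the reversed pattern.

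Second, I will form the box tensor with the unstable neighborhood of $\widehat{CFD}(X_K)$ in each of the four cases from the statement, and in each case exhibit a cancellation-by-change-of-basis argument that kills all $\rho_I\rho_{12}\rho_1$-tails and all ``long'' multiplications with intermediate outputs in $\iota_1$. After these cancellations, only a single generator of $gCFK^-$ carries a nontrivial $U$-tower; call it $g$. Which $x_i$ or $y_j$ tensor factor lies in $g$ depends on the sign of $\tau(K)$ and $\epsilon(K)$ through the shape of the unstable segment (Lemma \ref{unstable segment shape}): when $\tau(K)>0,\epsilon(K)=1$ it is essentially $x_1\otimes\xi_0$; when $\tau(K)\ge 0,\epsilon(K)=-1$ the unstable chain pulls back one step so it becomes (a cousin of) $x_3\otimes \lambda^1_1$, costing $m-1$ in Alexander grading; and so on for the remaining two cases. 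The four cases should align with the four immersed-curve cases in the proof of Theorem \ref{eps thm}, giving the same formulas.

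Third, I will convert the identification of $g$ into an actual value of $\tau$ using Proposition \ref{key prop}. The constants $C_a$ for the relevant $a\in CFA^-(V,Q_{m,n})\cdot \iota_0$ are pinned down by evaluating on $K=O$: since $Q_{m,n}(O)=O$, the box-tensor complex with $\widehat{CFD}(X_O)$ has a unique grading-$0$ generator in its $U$-tower, giving one linear equation per $a$. Once the $C_a$'s are known, substitution of $A_K(\xi_0)=\tau(K)$, $A_K(\eta_0)=-\tau(K)$, and the Alexander gradings of the chain generators in $\widehat{CFD}(X_K)$ (which shift by $\pm 1$ along the $D_{23}$ chains, following the grading rules of \cite{LOT18}) yields $A_{Q_{m,n}(K)}(g)$, whence $\tau(Q_{m,n}(K))=-A_{Q_{m,n}(K)}(g)$, producing the four expressions in \eqref{tau eq} and the three in \eqref{tau eq 3}.

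The main obstacle I foresee is the bookkeeping in step one: the $A_\infty$-module $CFA^-(V,Q_{m,n})$ has on the order of $mn$ generators, and writing down all multiplications with their algebra inputs (and proving they are all the multiplications) requires a careful enumeration of embedded bigons in the universal cover of $H_{m,n}$. I expect to handle this by proving that the region structure of $H_{m,n}$ is built out of three atomic local pieces (the clasp, a $T/L$-$T/R$ column, and a $V/L$-$V/R$ column), each of whose bigon census I can do once and then reuse; the count of each piece is determined by $m$ and $n$ via Proposition \ref{h diagram}, so the global multiplications are determined. The second obstacle, substantially easier, is verifying that the cancellations in step two terminate with a single $U$-tower summand; for this I will compare with the immersed-curve pairing diagrams of Section \ref{IMMERSED COMPUTATION}, using the isomorphism of complexes provided by \cite{CH23} as a sanity check on the remaining generator.
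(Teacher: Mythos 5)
Your proposal follows essentially the same route as the paper's Appendix proof: read off $CFA^-(V,Q_{m,n})$ explicitly from the Heegaard diagrams of Proposition \ref{h diagram}, box-tensor against the unstable neighborhood of $\widehat{CFD}(X_K)$ case by case, and normalize the Alexander gradings via Proposition \ref{key prop} evaluated on the unknot. The one caution is that your predicted surviving generators are not always literally correct (e.g.\ for $\tau(K)>0$, $\epsilon(K)=1$, $m\ge n$ the free part of the homology is generated by a linear combination involving $x_{1+2m}\otimes\eta_0$ and a $\lambda$-term, not $x_1\otimes\xi_0$, whose grading would give $+m$ rather than the correct $+n$), but the procedure you describe determines the correct generator once carried out.
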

    
The bordered Heegaard diagrams corresponding to the the generalized Mazur patterns $Q_{m,n}$ is given in Proposition \ref{h diagram}. We label the intersection points of the $\alpha$-arcs and $\beta$-arcs as follows: let the intersection points down the vertical arc be $x_1$ to $x_{1+2m}$, and the points along the horizontal arc be $y_1$ to $y_{2m+2n+2nm-2}$. 
Reading off these diagrams, the $A_\infty$ multiplications have $m$ squares which connect to each other, and $mn + n - 1$ isolated components. The first square contains a head, so one may consider the structure to be as a train with $m$ carriages, with $\rho_2$ arrows connecting carriages. We may see this representation in Figure \ref{CFAgen}. We index the squares by $r = {1,...,m}$ unless otherwise specified.

\begin{proposition}\label{m relations}
The generators $x_1$ to $x_{1+2m}$ lie in $\iota_{0}$, and the remaining generators lie in $\iota_{1}$. 
The multiplications for the isolated components are as follows:

\begin{enumerate}
    \item $y_{(2m+1)l+j} \xrightarrow[]{U} y_{(2m+1)(l+1)-j}$ for $j=1,...,m$, $l=0,1$
    \item $y_{(2m+1)l + (j-1)} \xrightarrow[]{U} y_{(2m+1)(l+1) - (j-1)}$ for $j=1,...,m+1$, $2\le l\le n-1$.
\end{enumerate}

When $n=1$, we only use Equation $1$, and $l=0$. When $n=2$, we use Equations $1$ with both $l=0,1$. When $n \geq 3$, we use both equations.

The multiplications which make up the squares are as follows

\begin{itemize}
    \item $x_r \xrightarrow[]{\rho_{1}} y_{2m+2n+2mn-2-r+1}$ for $r=1,...,m$
    \item $x_r \xrightarrow[]{U} x_{2m-r+1}$ for $r=1,...,m$
    \item $y_{2m+2n+2mn-2-r+1} \xrightarrow[]{U^{m-r+1}} y_{2n+2mn-2+r}$ for $r=1,...,m$
    \item $x_1 \xrightarrow[]{U^{n}\rho_{123}} y_{2n+2mn-1}$
    \item $y_{2m+2n+2mn-2} \xrightarrow[]{U^{n}\rho_{23}} y_{2n+2mn-1}$
    \item $x_{1+2m} \xrightarrow[]{U^{n}\rho_{3}} y_{2n+2mn-1}$
    \item $y_{2m+2n+2mn-2-r+1} \xrightarrow[]{\rho_{2}} x_{2m-r+2}$ for $r=1,...,m$
    \item $x_{r+1} \xrightarrow[]{\rho_{12}} x_{2m-r+1}$ for $r = 0,...,m-1$
    \item $y_{2n+2mn-2+r} \xrightarrow[]{\rho_{2}\widehat{\rho_{12}}^{l-1}\rho_{1}} y_{2n+2mn-2+r-l}$ for $r = 2,...,m$ and $l = 1,...,r-1$
    \item $x_{2m-r+1} \xrightarrow[]{U^{m-r}\widehat{\rho_{12}}^{l}\rho_{1}} y_{2n+2mn-2+r-l}$ for $r = 1,...,m$ and $l = 0,...,r-1$
    \item $x_{2m+1} \xrightarrow[]{U\rho_{3}\rho_{2}\rho_{1}} y_{2m+1}$, when $n\ge 2$
    \item $y_{2m+1} \xrightarrow[]{U^{n-1}} y_{2n+2mn-2}$, when $n\ge 2$
    \item $x_{2m+1} \xrightarrow[]{U^{m}\rho_{1}} y_{2n+2mn-2}$, when $n\ge 2$
\end{itemize}

There are also the following multiplications which appear in both the isolated components and the squares when $n \geq 2$:

\begin{itemize}
    \item $y_j \xrightarrow[]{U\rho_{2}\rho_{1}} y_{2m+1+j}$ for $j=1,...,2mn+n-2m-2$
\end{itemize}
Combinations of these elements also appear in the $A_{\infty}$ structure, as long as the index of any of the terms do not exceed $2mn+n-1$.

One notices that by this, the isolated components are connected to each other, but are isolated from the component containing the main squares.

Let $a = y_{2mn+n-2m-1}$, and $b = y_{2mn+2n-3}$. The $A_{\infty}$ module can be represented by the following figure:
  \begin{figure}[H]
    \begin{tikzcd}[sep=small]
     a \arrow[ddd, swap, "U", dotted, red] & & y_{2m+1} \arrow[ll, swap, "U^{n-2}\rho_{2}\rho_{12}^{n-3}\rho_{1}", dotted, red] \arrow[ddd, swap, "U^{n-1}", dotted] & x_{2m+1} \arrow[l, swap, "U\rho_{3}\rho_{2}\rho_{1}", dotted] \arrow[lddd, swap, "U^{m}\rho_{1}", dotted] \arrow[rddd, swap, "U^{n}\rho_{3}"] 
        & y_{2mn+2m+2n-2} \arrow[l, swap, "\rho_{2}"]  \arrow[ddd, swap, "\substack{U^m + \\ U^{n}\rho_{23}}", near start]
            & x_1 \arrow[lddd,"\mathclap{U^n\rho_{123}}",sloped] \arrow[l, swap,  "\rho_{1}"] \arrow[ll, bend right, swap, "\rho_{12}"] \arrow[ddd, , swap, "U"] & \cdots \arrow[lddd, "\rho_2"] & y_{2mn+m+2n-1} \arrow[lddd, "\rho_2"] \arrow[ddd, "U"] & x_m \arrow[l, "\rho_1", swap] \arrow[ddd, "U"]\\
            & & & & & \\
            & & & & & & & & \\
             b & & y_{2n+2mn-2} \arrow[ll, "\rho_{2}\rho_{1}", dotted, red, swap] & & y_{2mn+2n-1} \arrow[ll, swap, "\rho_2\rho_1", dotted]
            & x_{2m} \arrow[l, swap, "U^{m-1}\rho_{1}"] & \cdots & y_{2mn+m+2n-2} & x_{m+1} \arrow[l, swap, "\rho_1"]
    \end{tikzcd}
    \caption{The $CFA^{-}$ structure for generalized Mazur patterns, with some of the differentials connecting the second row not included for clarity. The dotted lines are included when $n \geq 2$, and the red dotted lines are included when $n \geq 3$. The isolated components are not included here either, although they may be easily determined by Proposition \ref{m relations}. One in fact notices that when $n \geq 3$, the isolated component $a \xrightarrow{U} b$ is no longer isolated. See Section \ref{Examples} for examples of these relations.}
    \label{CFAgen}
    \end{figure}
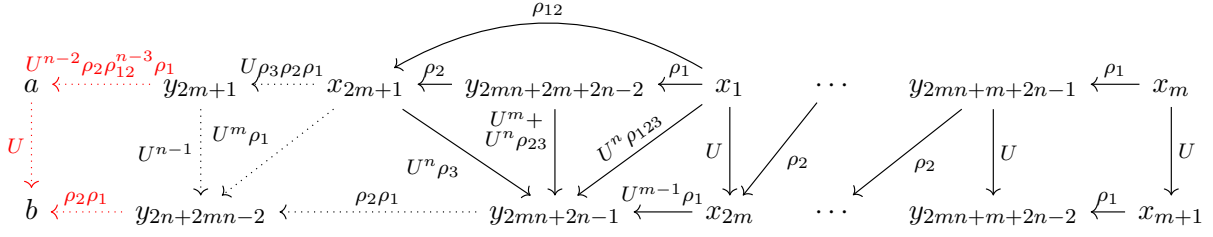
    
\end{proposition}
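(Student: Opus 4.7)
The plan is to read off the $A_\infty$ multiplications directly from a lift of the bordered Heegaard diagram $H_{m,n}$ to its universal cover, following the recipe in Section \ref{CFA hat}: every embedded disk bounded by an $\alpha$-arc segment and a $\beta$-arc segment contributes a multiplication of the form $m_{j+1}(a,\rho_{i_1},\ldots,\rho_{i_j})=U^{n_w(\phi)}b$, where the Reeb word is read counterclockwise along the portion of the boundary on the pointed matched circle. Since $H_{m,n}$ has genus one, disks in the universal cover are determined by their support regions, so the problem reduces to a structured combinatorial enumeration. The idempotent statement is immediate: $x_i$ lies on the vertical arc $\alpha_1^a$ (hence in $\iota_0$) and each $y_j$ lies on the horizontal arc $\alpha_2^a$ (hence in $\iota_1$).

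The relations naturally split into two classes mirroring the statement. The \emph{isolated components} (items (1) and (2)) come from thin bigons along the bottom $\alpha_2$-arc between consecutive $V/L$ or $V/R$ strands introduced in Steps 2 and 3 of Proposition \ref{h diagram}. These bigons pair $y_{(2m+1)l+j}$ with $y_{(2m+1)(l+1)-j}$ symmetrically across a $B/M$ rainbow, each containing exactly one $w$-basepoint; a count shows that Step 3 contributes precisely $2m+1$ new such disks per added row, matching the indexing. The \emph{square} relations near the clasp involve disks whose boundary traverses the pointed matched circle and are decorated by various $\rho_I$; the small cases $Q_{3,1}$, $Q_{1,2}$, and $Q_{2,3}$ worked out in Section \ref{Examples} already exhibit every combinatorial type of disk that appears in the general case, and one should verify that the generic $(m,n)$ diagram merely repeats these local configurations.

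To actually prove the list is complete, I would induct on $m$ and $n$ using the three-step construction of $H_{m,n}$ from Proposition \ref{h diagram}. The base case $H_{1,1}$ is verified directly from Figure \ref{fig:Q11}. For the inductive step, adding a strand of type $T/L$, $T/R$, $V/L$, $V/R$, $B/R$, or $B/M$ either subdivides an existing disk into predictable pieces or introduces exactly the new disks listed, with shape determined by the position of the strand relative to the basepoints. Along the way one must check that no candidate disk passes through the top-left corner of the universal cover and that long disks have boundary words compatible with the algebra relations in equation \eqref{algebra}; this is how the higher operations $m_3$, $m_4$, $m_5$ with boundary reading $\rho_2\widehat{\rho_{12}}^{l-1}\rho_1$ and $U^{m-r}\widehat{\rho_{12}}^l\rho_1$ are produced by disks wrapping across $l$ consecutive columns.

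The main obstacle is exhaustiveness and bookkeeping: certifying that no disk, and in particular no long disk contributing to a high $m_{j+1}$, is omitted. I would control this by organizing the enumeration first by the $w$-multiplicity $n_w(\phi)$ and then by the length of the Reeb word; since each additional factor of $\rho_{12}$ forces the disk to wrap an additional column, and the universal cover has only finitely many $w$-basepoints available before one must cross a forbidden region, the allowed words are bounded in length by explicit functions of $m$ and $n$. The examples in Section \ref{Examples} serve both as inductive base cases and as sanity checks that the decorations $U^{n}\rho_{123}$, $U^n\rho_{23}$, $U^n\rho_{3}$ meeting at $y_{2n+2mn-1}$ occur with the exponents claimed; after a final change of basis as in Example \ref{Ex:Q31} (Remark \ref{rmk change of basis}) to absorb redundant higher terms, the structure displayed in Figure \ref{CFAgen} follows.
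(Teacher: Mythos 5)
Your proposal follows essentially the same route as the paper: the paper offers no written proof of this proposition at all, simply asserting that the relations are obtained by ``reading off'' the Heegaard diagrams of Proposition \ref{h diagram} via the disk-counting recipe of Section \ref{CFA hat}, with the worked examples of Section \ref{Examples} serving as the only explicit verification. Your additional suggestion to organize the enumeration as an induction on the three-step construction of $H_{m,n}$, bounding Reeb-word lengths by how many columns a disk must wrap, is a sensible way to make that assertion rigorous and in fact goes somewhat beyond what the paper records.
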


\begin{remark} \label{rmk change of basis}
There are also the following multiplications: $x_j \xrightarrow[]{\rho_{3}\rho_{2}\rho_{1}} y_j$ for $j=1,...,2m$. These may be removed by the following change of basis: $x_{2m-j+1}^{\prime} = x_{2m-j+1} + \rho_{3}\rho_{2}\rho_{1}y_j$. This isolates $y_j \xrightarrow[]{U} y_{2n-j+1}$ for $j = 1,...,n$. Furthermore, there are also $U\rho_{2}\rho_{1}$ relations between the isolated components, outlined in \ref{m relations}. We may remove many of these by a similar change of basis as the one above, with the appropriate substitutions.
\end{remark}

 From the orientation choice letting $w(Q_{m,n}) = -(m-n)$ and Proposition \ref{key prop}, we have
\[A_{Q_{m,n}(K)}(a \otimes x) = -(m-n)A_{k}(x) + C_{a}.\]
The following lemma allows us to determine the coefficient $C_a$ for certain $a$.

\begin{lemma} \label{lem coef}
    The constants associated to the generators of $CFA^{-}(V,Q_{m,n}) \cdot \iota_{0}$ via Propsition \ref{key prop} are $C_{x_{m+1}} = 0$, $C_{x_{m}} = C_{x_{m+2}} = -1$,..., and $C_{x_{m+1-r}} = C_{x_{m+1+r}} = -r$ for $r = 1,...,m$.
\end{lemma}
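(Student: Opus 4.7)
The plan is to apply Proposition~\ref{key prop} with $K = O$, the unknot. Since $Q_{m,n}(O) = O$ and $A_O(x) = 0$ for every homogeneous $x \in \iota_0 \widehat{CFD}(X_O)$, the proposition reduces to $C_a = A_{Q_{m,n}(O)}(a \otimes x)$, so the task becomes computing these Alexander gradings inside $gCFK^{-}(O) \simeq \mathbb{F}[U]$, which lives entirely in Alexander grading $0$. The key observation is that every differential in $gCFK^{-}$ preserves Alexander grading, so each $A_\infty$ multiplication of $CFA^{-}(V, Q_{m,n})$ that pairs nontrivially with a structure map of $\widehat{CFD}(X_O)$ yields a linear relation among the $C_{x_i}$.

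Two families of multiplications from Proposition~\ref{m relations} produce the needed relations. First, the actions $m_1(x_r) = U x_{2m-r+1}$ for $r = 1, \ldots, m$ pair trivially with any generator of $\widehat{CFD}$, giving $C_{x_r} = C_{x_{2m-r+1}} - 1$. Second, since $\tau(O) = 0$, the type $D$ module $\widehat{CFD}(X_O)$ is supported in $\iota_0$ and carries a $D_{12}$ action coming from the unstable chain $\xi_0 \xrightarrow{D_{12}} \eta_0$; combining this with $m_2(x_s, \rho_{12}) = x_{2m-s+2}$ for $s = 1, \ldots, m$ produces $C_{x_s} = C_{x_{2m-s+2}}$. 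Substituting one into the other yields $C_{x_{r+1}} = C_{x_r} + 1$ for $r = 1, \ldots, m$, which telescopes to $C_{x_{m+1-r}} = C_{x_{m+1}} - r$, and the $\rho_{12}$-symmetry then gives $C_{x_{m+1+r}} = C_{x_{m+1-r}}$.

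It remains to anchor $C_{x_{m+1}} = 0$. Because $\widehat{CFD}(X_O)$ lies entirely in $\iota_0$, the box tensor $CFA^{-}(V, Q_{m,n}) \boxtimes \widehat{CFD}(X_O)$ has exactly the $2m+1$ generators $x_j \otimes \xi_0$, with differentials $\partial(x_r \otimes \xi_0) = U x_{2m-r+1} \otimes \xi_0 + x_{2m-r+2} \otimes \xi_0$ for $r = 1, \ldots, m$ and no outgoing differentials from the remaining generators. A direct computation shows that modulo these differentials every generator is identified with a $U$-power of $x_{m+1} \otimes \xi_0$ (via the chain $x_{2m+1} \equiv U x_{2m} \equiv U^2 x_{2m-1} \equiv \cdots \equiv U^m x_{m+1}$), so the homology is the rank-one $\mathbb{F}[U]$-module generated by $x_{m+1} \otimes \xi_0$. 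Matching this with $CFK^{-}(O) = \mathbb{F}[U]$ in Alexander grading $0$ forces $A_O(x_{m+1} \otimes \xi_0) = 0$, hence $C_{x_{m+1}} = 0$.

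The main technical hurdle is confirming that no higher-order multiplications of the form $m_{k+1}(x_j, \rho_{12}, \ldots, \rho_{12})$ with $k \geq 2$ from $CFA^{-}(V, Q_{m,n})$ contribute unexpected differentials in the box tensor with $\widehat{CFD}(X_O)$; an explicit inspection of Proposition~\ref{m relations} must verify that no such multiplications appear, so that the complex and its homology are exactly as described above.
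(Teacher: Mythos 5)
Your proposal is correct and follows essentially the same route as the paper: specialize Proposition \ref{key prop} to the unknot, note that $\widehat{CFD}(X_O)$ is a single $\iota_0$-generator with a $\rho_{12}$ self-arrow, read off the relations $C_{x_r}=C_{x_{2m-r+1}}-1$ and $C_{x_r}=C_{x_{2m-r+2}}$ from the box-tensor differentials, and anchor everything by identifying $x_{m+1}\otimes\xi_0$ as the generator of the $\mathbb{F}[U]$-free homology in Alexander grading $0$. The only difference is that you spell out the telescoping of the relations and the homology computation more explicitly than the paper, and you correctly flag the same verification the paper asserts, namely that no other $A_\infty$ operations (in particular no higher products with repeated $\rho_{12}$ inputs) contribute to the tensor complex.
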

\begin{proof}
Let O $\subset S^3$ be the unknot, and $X_{O}$ its complement equipped with the 0-framing. Note that $Q_{m,n}(O)$ is also the unknot. By the pairing theorem, we have
\[gCFK^\infty (Q_{m,n}(O)) = CFA^-(V,Q_{m,n})\boxtimes \widehat{CFD}(X_O).\]
The associated graded on the left hand side has a single generator in its homology, with 0 Alexander grading since $\tau(Q(O))=0$. On the right hand side, the type D structure $\widehat{CFD}(X_{O})$ has a single generator $\xi_{0}$ in 0 grading. The tensor complex $CFA^{-}(V,Q_{m,n}) \boxtimes \widehat{CFD}(X_{O})$ has a summand:
\begin{figure}[H]
\begin{tikzcd}
& x_1\otimes \xi_0 \arrow[ld] \arrow[d, "U"]& \cdots \arrow[ld] \arrow[d, "U"] &  x_m\otimes\xi_0 \arrow[ld] \arrow[d, "U"]
\\
x_{2m+1}\otimes\xi_0 & x_{2m}\otimes\xi_0 & \cdots & x_{m+1}\otimes\xi_0
\end{tikzcd}
\end{figure}
\noindent It is easy to verify that this is indeed a summand. The only relations in $CFA^-(V,Q_{m,n})$ that pair nontrivially with the type D structure are arrows connecting $x_1$, $x_{2m}$ or $x_{2m+1}$ with labels $U^p$ or $U^p\rho_{12}$ (for some integer $p\ge 0$), and all of them are captured within the fundamental complex in Proposition \ref{m relations}. One may observe further that the homology of this summand is generated by $x_{m+1} \otimes \xi_{0}$, which implies that $A(x_{m+1})=0$. The lemma follows. 
\end{proof}

\begin{proof}[Proof of Theorem \ref{tau thm}]Consider the box tensor product of $CFA^{-}(V,Q)$ with $\widehat{CFD}(X_{K})$, where K is a knot in $S^3$, and $X_{K}$ is its exterior equipped with the 0-framing. Note that the generators of the ``isolated'' components do not affect $\tau(Q_{m,n}(K))$ since their tensor products with $\widehat{CFD}(X_{K})$ produce summands of $CFK^{-}(Q_{m,n}(K))$ which are U-torsion.
In the case where $\epsilon(K) = 0$, $\widehat{CFD}(X_{K})$ has a summand isomorphic to $\widehat{CFD}(X_{O})$, so the tensor complex $CFA^{-}(V,Q_{m,n}) \boxtimes \widehat{CFD}(X_{K})$ has a summand isomorphic to (diagram). It follows immediately that $\tau(Q_{m,n}(K)) = \tau(Q_{m,n}(O)) = 0$. Thus, we restrict our discussion to the cases where $\epsilon(K) = \pm 1$. Let $s = 2|\tau(K)|$, and apply the following change of variables for simplicity:
$$w = y_{2n+2mn-2},\ \ y = y_{2m+2n+2mn-2},\ \ y' = y_{2n+2mn-1},\ \ z = y_{2m+2n+2mn-3},\ \  z' = y_{2n+2mn}.$$ 
Moreover, we denote $\lambda_{l_1}^1$ as $\lambda$ and $\kappa_{k_1}^1$ as $\kappa$ in the unstable neighborhood of $CFD(X_K)$.

Recall that the homology of the tensor complex contains a $\mathbb{F}[U]$-free part and a $U$-torsion part. Our goal is to determine the Alexander grading for generator of the free part of the homology. Towards this end, we consider three cases cases according to $\tau(K)$. 

\item \textit{Case 1:} When $\tau > 0$, the unstable chain in $\widehat{CFD}(X_{K})$ (along with the possible $D_1$ differential from $\kappa$ to $\eta_0$ if $\epsilon(K)=-1$, and the $D_2$ differential from $\lambda$ to $\xi_0$ if $\epsilon(K)=1$) is as follows
\begin{figure}[H]
   \begin{tikzcd}
       \kappa \arrow[r, dashed, red, "D_1"] &\eta_0 \arrow[r, "D_3"] &\mu_1 \arrow[r, "D_{23}"] &\cdots \arrow[r, "D_{23}"] &\mu_s &\xi_0 \arrow[l, swap, "D_1"] & \lambda \arrow[l, dashed, swap, blue, "D_2"]
   \end{tikzcd}
 \end{figure}
\noindent The red dashed arrow is included if $\epsilon(K)=-1$ and $n>1$, while the blue dashed arrow is included if $\epsilon(K)=1$ and $m > 1$. The tensor complex has the following direct summand:
\begin{figure}[H]
\begin{tikzcd}
    y_{2m+1}\otimes \kappa \arrow[rd, dashed, red, "U^{n-1}"] &x_{1+2m}\otimes \eta_0 \arrow [d, dashed, red, "U^m"] \arrow[rd, "U^n"] & y\otimes\mu_1 \arrow[d, "U^m"] \arrow[rd, "U^n"]&\cdots \arrow[rd, "U^n"] & y\otimes\mu_{s-1} \arrow[d, "U^m"] \arrow[dr, "U^n"] & y\otimes\mu_s \arrow[d, "U^m"]& x_1\otimes\xi_0 \arrow[l] \arrow[d, "U"]\\
    &w\otimes \kappa & y'\otimes\mu_1 & \cdots & y'\otimes\mu_{s-1} & y'\otimes\mu_s & x_{2m}\otimes \xi_0 \arrow[l, "U^{m-1}"]\\ 
    & & & & & z'\otimes\lambda \arrow[u, blue, dashed] & z\otimes\lambda \arrow[l, dashed, blue, "U^{m-1}"] \arrow[u, blue, dashed]
\end{tikzcd}
\end{figure}
By Proposition \ref{key prop} and Lemma \ref{lem coef}, we have \begin{align*}
A(x_{1+2m}\otimes\eta_0)&= -(m-n)A(\eta_0) - m = (m-n)\tau(K) - m,\\
A(x_1\otimes\xi_0) &= -(m-n)A(\xi_0) - m = -(m-n)\tau(K) - m,\\
A(x_{2m}\otimes\xi_0)&= -(m-n)A(\xi_0) - (m-1) = -(m-n)\tau(K) - (m-1),
\end{align*}
which allows us to compute that
\begin{align*}
    A(y'\otimes\mu_j) &= (m-n)\tau(K) + (n-m)j,\\
    A(y\otimes\mu_j) &= (m-n)\tau(K) + (n-m)j -m,\\
    A(w\otimes\kappa) &= (m-n)\tau(K),\\
    A(y_{2m+1}\otimes\kappa) &= (m-n)\tau(K)-(n-1),\\
    A(z'\otimes\lambda) &= -(m-n)\tau(K),\\
    A(z\otimes\lambda) &= -(m-n)\tau(K)-(m-1).
\end{align*}
We then split our calculation into the following five subcases:
\begin{itemize}
        \item \textit{Case 1.1:} If $\epsilon(K)=1$ and $m\ge n$, the free part of the homology is generated by 
    \[U^nz'\otimes \lambda+\bigg(\sum_{r=1}^{s-1}U^{(m-n)(s-1-r)}y\otimes \mu_r\bigg)+U^{(m-n)(s-1)}x_{1+2m}\otimes \eta_0,\]
    which has Alexander grading $-(m-n)\tau(K)-n$. This gives us
    \[\tau(Q_{m,n}(K))=(m-n)\tau(K)+n=|m-n|\tau(K)+n.\]
    \item \textit{Case 1.2:} If $\epsilon(K)=1$ and $m<n$, the free part is generated by the element \[x_{1+2m}\otimes\eta_0 + \bigg(\sum_{r=1}^{s-1} U^{(n-m)r}y\otimes\mu_{r}\bigg) + U^{(n-m)s+1}x_{2m}\otimes\xi_0,\] 
with Alexander grading $(m-n)\tau(K) - m$. In this case, we have 
\[\tau(Q_{m,n}(K)) = -(m-n)\tau(K) + m = |m-n|\tau(K) + m.\]
    \item \textit{Case 1.3:} If $\epsilon(K)=-1$ and $m>n$, the free part is either generated by
     \[x_{2m}\otimes \xi_0+U^{m-n-1}\bigg(\sum_{r=1}^{s-1}U^{(m-n)(s-1-r)}y\otimes \mu_r\bigg)+U^{(m-n)s-1}x_{1+2m}\otimes \eta_0\]
     if $n=1$, or by the above sum plus an additional term 
$U^{(m-n)(s+1)}y_{1+2m}\otimes \kappa$
     if $n>1$. In both cases, the generator has Alexander grading $-(m-n)\tau(K)-(m-1)$, and 
     \[\tau(Q_{m,n}(K)) = (m-n)\tau(K) + (m-1) = |m-n|\tau(K) + (m-1).\]
    \item \textit{Case 1.4:} If $\epsilon(K)=-1$ and $m<n$, the generator is
     \[y_{2m+1}\otimes \kappa +U^{n-m-1}x_{1+2m}\otimes\eta_0 + U^{n-1-m}\bigg(\sum_{r=1}^{s-1} U^{(n-m)r}y\otimes\mu_{r}\bigg) + U^{(n-m)(s+1)}x_{2m}\otimes\xi_0,\] 
     which has Alexander grading $(m-n)\tau(K)-(n-1)$, giving us 
     \[\tau(Q_{m,n}(K)) = -(m-n)\tau(K) + (n-1) = |m-n|\tau(K) + (n-1).\]
     \item \textit{Case 1.5:} If $\epsilon(K)=-1$ and $m=n$, the free part of the homology is generated by 
     \[Uy_{2m+1}\otimes \kappa+x_{1+2m}\otimes \eta_0+\bigg(\sum_{r=1}^{s-1}y\otimes \mu_r\bigg)+Ux_{2m}\otimes \xi_0,\]
     with Alexander grading $(m-n)\tau(K)-m=-m$, and thus 
     \[\tau(Q_{m,n}(K)) = m.\]
\end{itemize}
Taken together, when $\tau(K)>0$ and $m\ne n$, we have 
\[\tau(Q_{m,n}(K)) = \begin{cases}
    |m-n|\tau(K) + \min(m,n) & \text{if } \epsilon(K)=1, \\
    |m-n|\tau(K) + \max(m,n)-1 & \text{if } \epsilon(K)=-1.
\end{cases}.\]
Moreover, when $\tau(K)>0$ and $m=n$, we have 
\[\tau(Q_{m,n}(K))=m, \ \ \ \text{for all } \epsilon(K).\]

\item \textit{Case 2}: When $\tau < 0$, the unstable chain in $\widehat{CFD}(X_K)$ is the following:
\begin{figure}[H]
   \begin{tikzcd}
       \lambda \arrow[r, dashed, blue, "D_2"] &\xi_0 \arrow[r, "D_{123}"] &\mu_1 \arrow[r, "D_{23}"] &\cdots \arrow[r, "D_{23}"] &\mu_s \arrow[r, "D_2"] &\eta_0 \arrow[r, dashed, red, "D_1"] &\kappa
       \end{tikzcd}
\end{figure}
\noindent where again the red dashed arrow included if $\epsilon(K)=-1$ and $n>1$, and the blue dashed arrow is included if $\epsilon(K)=1$ and $m >1$. The summand with nonvanishing homology in the tensor complex is given by:
\begin{figure}[H]
\begin{tikzcd}[column sep = small]
    z\otimes\lambda \arrow[rd, blue, dashed] \arrow[d,  blue, dashed, "U^{m-1}"]& x_1\otimes\xi_0 \arrow[d, "U"] \arrow[rd, "U^n"] & y\otimes\mu_1 \arrow[d, "U^m"] \arrow[rd, "U^n"] & \cdots \arrow[rd, "U^n"] & y\otimes\mu_{s-1} \arrow[rrr, dashed, bend left, red, "U"] \arrow[d, "U^m"] \arrow[rd, "U^n"] & y\otimes\mu_s \arrow[d, "U^m"] \arrow[r] & x_{1+2m}\otimes \eta_0 \arrow[d, dashed, red, "U^m"] &y_{1+2m}\otimes \kappa \arrow[ld, dashed, red, "U^{n-1}"] \\
     z'\otimes\lambda & x_{2m}\otimes\xi_0 & y'\otimes\mu_1 & \cdots & y'\otimes\mu_{s-1}  & y'\otimes\mu_s \arrow[r, dashed, red]& w\otimes\kappa &
\end{tikzcd}
\end{figure}
 We calculate inductively that 
\begin{align*}
    A(y'\otimes \mu_j)&= -(m-n)\tau(K)+j(n-m),\\
    A(y\otimes \mu_j)&= -(m-n)\tau(K)+j(n-m)-m.
\end{align*}
In the case when $\epsilon(K)=-1$ and $n>1$, we have
\begin{align*}
    A(w\otimes \kappa)&= (m-n)\tau(K),\\
    A(y_{1+2m}\otimes \kappa)&= (m-n)\tau(K)-(n-1),
\end{align*}
and in the case when $\epsilon(K)=1$ and $m>1$, we have
\begin{align*}
    A(z\otimes \lambda)&= -(m-n)\tau(K)-(m-1),\\
    A(z'\otimes \lambda)&= -(m-n)\tau(K).
\end{align*}
Next, we split our calculation into four cases, depending on the value of $\epsilon(K)$ and whether $m\ge n$
\begin{itemize}
    \item \textit{Case 2.1:} If $m\ge n$ and $\epsilon(K)=1$, the free part of the homology is generated by $z'\otimes \lambda$, which has Alexander grading 
    \[A(z'\otimes \lambda)= -(m-n)\tau(K),\]
giving us 
\[\tau(Q_{m,n}(K))=(m-n)\tau(K)=|m-n|\tau(K).\]
    \item \textit{Case 2.2:} If $m< n$ and $\epsilon(K)=1$, the free part is generated by $y'\otimes \mu_s$, with Alexander grading $A(y'\otimes \mu_s)=(m-n)\tau(K)$. As such, we have 
    \[\tau(Q_{m,n}(K))=-(m-n)\tau(K)=|m-n|\tau(K).\]
    \item \textit{Case 2.3:} If $m\ge n$ and $\epsilon(K)=-1$, the free part is generated by $x\otimes \xi_0$, with Alexander grading $A(x\otimes \xi_0)=-(m-n)\tau(K)-(m-1)$. This gives us 
    \[\tau(Q_{m,n}(K))=(m-n)\tau(K)+(m-1)=|m-n|\tau(K)+(m-1).\]
    \item \textit{Case 2.4:} If $m< n$ and $\epsilon(K)=-1$, the free part is generated by $y_{1+2m}\otimes \kappa + U^{n-m-1}x_{1+2m}\otimes \eta_0$, with Alexander grading $A(y_{1+2m}\otimes \kappa)=(m-n)\tau(K)-(n-1)$. Therefore, we have
    \[\tau(Q_{m,n}(K))=-(m-n)\tau(K)+(n-1)=|m-n|\tau(K)+(n-1).\]
\end{itemize}
In summary, when $\tau(K)<0$, we have \[\tau(Q_{m,n}(K)) = \begin{cases}
    |m-n|\tau(K) & \text{if } \epsilon(K)=1, \\
    |m-n|\tau(K) + \max(m,n) - 1 & \text{if } \epsilon(K)=-1.
\end{cases}.\]


\item \textit{Case 3:}
When $\tau = 0$, the unstable neighborhood in $\widehat{CFD}(X_K)$ is 
\begin{figure}[H]
   \begin{tikzcd}
       \lambda \arrow[r, blue, dashed, "D_2"] &\xi_0 \arrow[r, "D_{12}"] &\eta_0\arrow[r, red, dashed, "D_1"] & \kappa,
       \end{tikzcd}
\end{figure}
\noindent and the differential in the tensor complex is
\begin{figure}[H]
\begin{tikzcd}
    z\otimes\lambda \arrow[rd, dashed, blue] \arrow[d, dashed, blue, "U^{m-1}"]& x_1\otimes\xi_0 \arrow[d, "U"] \arrow[r] &x_{1+2m}\otimes\eta_0 \arrow[d, dashed, red, "U^m"] & y_{2m+1}\otimes \kappa \arrow[ld, dashed, red, "U^{n-1}"]\\
     z'\otimes\lambda & x_{2m}\otimes\xi_0\arrow[r, red, dashed, "U^{m-1}"] & w\otimes\kappa & 
\end{tikzcd}
\end{figure}
\noindent where we include red dashed arrow  if $\epsilon(K)=-1$ and $n>1$, and the blue dashed arrow if $\epsilon(K)=1$ and $m >1$. One may check that this is indeed a summand. We divide our computation into the following three cases
\begin{itemize}
    \item \textit{Case 3.1:} If $\epsilon(K)=1$, either $z'\otimes \lambda$ or $x_{2m}\otimes \xi_0$ generates the free part of the homology, depending on whether $m>1$ or $m=1$, respectively. In both cases, the Alexander grading of the generator is 0, and therefore \[\tau(Q_{m,n}(K))=0=|m-n|\tau(K).\]
    \item \textit{Case 3.2:} If $\epsilon(K)=-1$ and $m\ge n$, the free part of the homology is generated by either $x_{2m}\otimes \xi_0 + U^{m-n}y_{2m+1}\otimes \kappa$ if $n>1$, or by $x_{2m}\otimes \xi_0$ if $n=1$. In both cases, the Alexander grading of the generator is $-m+1$. Hence, we get \[\tau(Q_{m,n}(K)) =m-1 = |m-n|\tau(K)+\max(m,n)-1.\]
    \item \textit{Case 3.3:} If $\epsilon(K)=-1$ and $m<n$, the free part is generated by $U^{n-m}x_{2m}\otimes \xi_0 + y_{2m+1}\otimes \kappa$, which has Alexander grading $-n+1$, giving us 
    \[\tau(Q_{m,n}(K)) =n-1 = |m-n|\tau(K)+\max(m,n)-1.\]
\end{itemize}
All in all, when $\tau(K)=0$ we get
\[
\tau(Q_{m,n}(K)) = \begin{cases}
    |m-n|\tau(K) & \text{if } \epsilon(K)=1, \\
    |m-n|\tau(K)+\max(m,n)-1 & \text{if } \epsilon(K)=-1.
\end{cases}
\]
\end{proof}

\bibliographystyle{amsalpha}
\bibliography{bib}

\providecommand{\bysame}{\leavevmode\hbox to3em{\hrulefill}\thinspace}
\providecommand{\MR}{\relax\ifhmode\unskip\space\fi MR }
\providecommand{\MRhref}[2]{%
  \href{http://www.ams.org/mathscinet-getitem?mr=#1}{#2}
}
\providecommand{\href}[2]{#2}
\begin{thebibliography}{Hom14b}

\bibitem[Bod23]{Bod23}
Holt Bodish, \emph{Some {T}hree and {F}our-{D}imensional {I}nvariants of {S}atellite {K}nots with (1, 1)-{P}atterns}, arXiv preprint arXiv:2305.18549 (2023).

\bibitem[Bod24]{Bod24}
\bysame, \emph{Genus, {F}iberedness, $\tau$ and $\epsilon $ of {S}atellite {K}nots with $n$-{T}wisted {G}eneralized {M}azur patterns}, arXiv preprint arXiv:2405.08763 (2024).

\bibitem[CH23]{CH23}
Wenzhao Chen and Jonathan Hanselman, \emph{Satellite knots and immersed {H}eegaard {F}loer homology}, arXiv preprint arXiv:2309.12297 (2023).

\bibitem[Che19]{Che19}
Wenzhao Chen, \emph{Knot {F}loer homology of satellite knots with (1, 1)-patterns}, arXiv preprint arXiv:1912.07914 (2019).

\bibitem[Fre82]{Fre82}
Michael~Hartley Freedman, \emph{The topology of four-dimensional manifolds}, Journal of Differential Geometry \textbf{17} (1982), no.~3, 357--453.

\bibitem[Ghi08]{Ghi08}
Paolo Ghiggini, \emph{Knot {F}loer homology detects genus-one fibred knots}, American journal of mathematics \textbf{130} (2008), no.~5, 1151--1169.

\bibitem[Han23]{Han23}
Jonathan Hanselman, \emph{Knot {F}loer homology as immersed curves}, arXiv preprint arXiv:2305.16271 (2023).

\bibitem[Hed07]{Hed07}
Matthew Hedden, \emph{Knot {F}loer homology of {W}hitehead doubles}, Geometry \& Topology \textbf{11} (2007), no.~4, 2277--2338.

\bibitem[Hom14a]{Hom14a}
Jennifer Hom, \emph{Bordered {H}eegaard {F}loer homology and the tau-invariant of cable knots}, Journal of Topology \textbf{7} (2014), no.~2, 287--326.

\bibitem[Hom14b]{Hom14b}
\bysame, \emph{The knot {F}loer complex and the smooth concordance group}, Commentarii Mathematici Helvetici \textbf{89} (2014), no.~3, 537--570.

\bibitem[Hom20]{Hom20}
\bysame, \emph{Lectures notes on {H}eegaard {F}loer homology}, arXiv preprint arXiv:2008.01836 (2020).

\bibitem[HRW23]{HRW23}
Jonathan Hanselman, Jacob Rasmussen, and Liam Watson, \emph{Bordered {F}loer homology for manifolds with torus boundary via immersed curves}, Journal of the American Mathematical Society (2023).

\bibitem[Kir97]{Kir97}
Rob Kirby, \emph{Problems in low-dimensional topology}, Proceedings 1993 Georgia International Topology Confence, Geometric Topology \textbf{2} (1997).

\bibitem[Lev16]{Lev16}
Adam~Simon Levine, \emph{Nonsurjective satellite operators and piecewise-linear concordance}, Forum of Mathematics, Sigma, vol.~4, Cambridge University Press, 2016, p.~e34.

\bibitem[Lic97]{Lic97}
WB~Raymond Lickorish, \emph{An introduction to knot theory}, vol. 175, Springer Science \& Business Media, 1997.

\bibitem[LOT14]{LOT14}
Robert Lipshitz, Peter Ozsv{\'a}th, and Dylan~P Thurston, \emph{Notes on bordered {F}loer homology}, Contact and symplectic topology (2014), 275--355.

\bibitem[LOT18]{LOT18}
Robert Lipshitz, Peter Ozsv{\'a}th, and Dylan Thurston, \emph{Bordered {H}eegaard {F}loer homology}, vol. 254, American Mathematical Society, 2018.

\bibitem[Man14]{Man14}
Ciprian Manolescu, \emph{An introduction to knot {F}loer homology}, Physics and mathematics of link homology \textbf{680} (2014), 99--135.

\bibitem[Ni07]{Ni07}
Yi~Ni, \emph{Knot {F}loer homology detects fibred knots}, Inventiones mathematicae \textbf{170} (2007), no.~3, 577--608.

\bibitem[Ord06]{Ord06}
Philip~JP Ording, \emph{On knot {F}loer homology of satellite (1, 1) knots}, Columbia University, 2006.

\bibitem[OS03]{OS03}
Peter Ozsv{\'a}th and Zolt{\'a}n Szab{\'o}, \emph{Knot {F}loer homology and the four-ball genus}, Geometry \& Topology \textbf{7} (2003), no.~2, 615--639.

\bibitem[OS04a]{OS04b}
\bysame, \emph{Holomorphic disks and genus bounds}, Geometry \& Topology \textbf{8} (2004), no.~1, 311--334.

\bibitem[OS04b]{OS04a}
\bysame, \emph{Holomorphic disks and knot invariants}, Advances in Mathematics \textbf{186} (2004), no.~1, 58--116.

\bibitem[OS04c]{OS04c}
\bysame, \emph{Holomorphic disks and topological invariants for closed three-manifolds}, Annals of Mathematics (2004), 1027--1158.

\bibitem[OS06]{OS06}
\bysame, \emph{An introduction to {H}eegaard {F}loer homology}, {F}loer homology, gauge theory, and low-dimensional topology \textbf{5} (2006), 3--27.

\bibitem[OST08]{OS08}
Peter Ozsv{\'a}th, Zolt{\'a}n Szab{\'o}, and Dylan~P Thurston, \emph{Legendrian knots, transverse knots and combinatorial {F}loer homology}, Geometry \& Topology \textbf{12} (2008), no.~2, 941--980.

\bibitem[Ras03]{Ras03}
Jacob~Andrew Rasmussen, \emph{{F}loer homology and knot complements}, Harvard University, 2003.

\bibitem[Ray15]{Ray15}
Arunima Ray, \emph{Satellite operators with distinct iterates in smooth concordance}, Proceedings of the American Mathematical Society \textbf{143} (2015), no.~11, 5005--5020.

\bibitem[Sch53]{Sch53}
Horst Schubert, \emph{Knoten und vollringe}.

\end{thebibliography}
\end{document}